\newtheorem{thm}{Theorem}[section]
\newtheorem{lem}[thm]{Lemma}
\newtheorem{prop}[thm]{Proposition}
\newtheorem{defn}[thm]{Definition}
\newtheorem*{thm:mainthm}{Theorem \ref{thm:mainthm}}
\begin{document}

\title[Generic initial system of a 2-complete intersection]{The generic initial ideals of powers of a 2-complete intersection}
\author{Sarah Mayes}

\maketitle

\vspace*{-2em} %%reduce space between title and abstract

\begin{abstract}
We compute the reverse lexicographic generic initial ideals of the powers of a $2$-complete intersection ideal $I$.  In particular, we give six algorithms to compute these generic initial ideals, the choice of which depends on the power and on the relative degrees of the minimal generators of $I$.
\end{abstract}

\section{Introduction}

Consider the collection of ideals $\{\text{gin}(I^n)\}_n$ obtained by taking the generic initial ideals of powers of a fixed ideal $I$ in a polynomial ring.  Our study of such families of monomial ideals was initially motivated by the desire to understand their asymptotic behaviour (see \cite{Mayes12}).   It soon became clear, however, that the individual ideals within such families are interesting in their own right.  In this paper we compute the generators of the ideals $\text{gin}(I^n)$ with respect to the reverse lexicographic order where $I$ is a 2-complete intersection and, in doing so, demonstrate relationships between such ideals.

Computing generic initial ideals is generally challenging because they are defined by an existence theorem rather than an explicit construction (see Galligo's Theorem, Theorem \ref{thm:galligo}).  As a result, there are few classes of ideals for which generic initial ideals have been explicitly computed (see \cite{Green98} for a survey, or \cite{Cimpoeas06}, \cite{ACP07}, \cite{CP08}, and \cite{CR10} for more recent results).

The 2-complete intersections are amongst the ideals whose reverse lexicographic generic initial ideals are completely understood.  In particular, if $I\subset K[x_1, \dots, x_m]$ is generated by a regular sequence of homogeneous polynomials of degrees $\alpha$ and $\beta$, with $\alpha \leq \beta$, then 
$$\text{gin}(I) = (x_1^{\alpha},x_1^{\alpha-1}x_2^{\lambda_{0}-2(\alpha-1)}, x_1^{\alpha-2}x_2^{\lambda_0-2(\alpha-2)}, \dots, x_1x_2^{\lambda_0-2}, x_2^{\lambda_0})$$ 
where $\lambda_0 = \beta+\alpha -1$ (see Section 4 of  \cite{Green98}).  The generic initial ideals for larger complete intersections, however, have proven difficult to compute.  For example, Cimpoea\c{s} \cite{Cimpoeas06} has exhibited the minimal generators for the generic initial ideals of strongly Lefschetz  3-complete intersections; the structure of such generic initial ideals is relatively difficult to describe and depends on the relative degrees of the generators of the complete intersection.

In this paper we explicitly compute the generators of the reverse lexicographic generic initial ideals of powers of 2-complete intersections.  In particular, we prove the following result.

\begin{thm:mainthm}
Fix positive integers $\alpha$, $\beta$, and $n$ such that $\beta \geq \alpha$ and $n \geq 2$.  If $I$ is a type $(\alpha, \beta)$ complete intersection in $K[x_1, \dots, x_m]$, where $K$ is a field of characteristic 0, then the reverse lexicographic generic initial ideal of $I^n$ is
$$\text{gin}(I^n) = (x_1^k, x_1^{k-1}x_2^{\lambda_{k-1}}, \dots, x_1x_2^{\lambda_1}, x_2^{\lambda_0})$$
where $k = n\alpha$ and $\{\lambda_i\}$ is the sequence of natural numbers arising from:
\begin{itemize}
\item Algorithm \ref{alg:far} if $\beta \geq 2\alpha-1$;
\item Algorithm \ref{alg:mid} if $2\alpha -1 > \beta \geq \frac{3}{2}\alpha$;
\item Algorithm \ref{alg:closedivides} if $\frac{3}{2}\alpha > \beta >\alpha$, $(\beta-\alpha) | \alpha$, and $n \geq \frac{\alpha}{\beta-\alpha} +1$;
\item Algorithm \ref{alg:closedoesnotdivide} if $\frac{3}{2}\alpha > \beta >\alpha$, $(\beta-\alpha) \nmid \alpha$, and $n \geq \lceil \frac{\alpha}{\beta-\alpha} \rceil+1$;
\item Algorithm \ref{alg:closesmalln} if $\frac{3}{2}\alpha > \beta >\alpha$ and $2 \leq n < \lceil \frac{\alpha}{\beta-\alpha} \rceil +1$; and
\item Algorithm \ref{alg:equal} if $\alpha = \beta$.
\end{itemize} 
\end{thm:mainthm}

The algorithms referred to in this theorem are stated in Section \ref{sec:proposedinvariants}. Although the particular choice of an algorithm in the theorem depends on $n$ and on the relative sizes of $\alpha$ and $\beta$, all of the algorithms share common features.  For example, they each compute the invariants $\lambda_i$ one-by-one, starting with $\lambda_0 =n\beta+\alpha-1$ and using the gaps $g_i=\lambda_{i-1}-\lambda_i$  to compute each successive invariant.  The patterns amongst the invariants of the ideals $\text{gin}(I^n)$ are best seen  by looking at the associated gap sequences of $\{g_i\}$, which consist entirely of the numbers 1, 2, and $\beta-2\alpha+2$.

This theorem adds powers of 2-complete intersections to the classes of ideals whose generic initial ideals can be explicitly computed.  The complexity of this result even in this small case, however, gives further evidence that finding generators of the generic initial ideals of powers of larger complete intersections may be optimistic and provides motivation to instead study the asymptotic behaviour of generic initial systems $\{\text{gin}(I^n)\}_n$.  One consequence of Theorem \ref{thm:mainthm} is a different proof for the case of a 2-complete intersection of the main result of \cite{Mayes12} on the asymptotic behaviour of the generic initial system.
\section{Preliminaries}
\label{sec:prelim}

In this section we will introduce some notation, definitions, and preliminary results related to generic initial ideals. Throughout, $R=K[x_1, \dots, x_m]$ is a polynomial ring over a field $K$ of characteristic 0 with the standard grading and some fixed term order $>$ with $x_1 > x_2 > \cdots >x_m$.  

%%Generic Initial Ideals 
\subsection{Generic Initial Ideals}

An element $g = (g_{ij}) \in \text{GL}_m(K)$ acts on $R$ and sends any homogeneous element $f(x_1, \dots, x_m)$ to the homogeneous element 
$$f(g(x_1), \dots, g(x_m))$$ 
where $g(x_i) = \sum_{j=1}^m g_{ij}x_j$.  If $g(I)=I$ for every upper triangular matrix $g$ then we say that $I$ is \textit{Borel-fixed}.  Borel-fixed ideals are \textit{strongly stable} when $K$ is of characteristic 0; that is, for every monomial $f$ in the ideal such that $x_i$ divides $f$, the monomials $\frac{x_jf}{x_i}$ for all $j<i$ are also in the ideal.  This property makes such ideals particularly nice to work with.

To any homogeneous ideal $I$ of $R$ we can associate a Borel-fixed monomial ideal $\text{gin}_{>}(I)$ which can be thought of as a coordinate-independent version of the initial ideal.\footnote{For a polynomial $f= \sum a_i m_i$, $\text{in}_>(f)$ is the largest $m_i$ with respect to $>$ such that $a_i$ is nonzero.  Further, for a polynomial ideal $I$, $\text{In}_>(I) = \{ \text{in}(f) : f \in I\}$.}  Its existence is guaranteed by the following result known as Galligo's theorem (also see \cite[Theorem 1.27]{Green98}).

\begin{thm}[{\cite{Galligo74} and \cite{BS87b}}]
\label{thm:galligo}
For any multiplicative monomial order $>$ on $R$ and any homogeneous ideal $I\subset R$, there exists a Zariski open subset $U \subset \text{GL}_m$ such that $\text{In}_{>}(g(I))$ is constant and Borel-fixed for all $g \in U$.  
\end{thm}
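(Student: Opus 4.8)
\emph{Proof strategy.} The plan is to split the statement into its two assertions — that $\text{In}_{>}(g(I))$ is constant on a nonempty Zariski open $U\subseteq\text{GL}_m$, and that this common value is Borel--fixed — and to reduce both to a single genuinely generic change of coordinates. Throughout I use that $K$ is infinite, so that $\text{GL}_m$ is irreducible and a nonzero regular function on it is nonvanishing on a dense open subset.

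For the constant value, the local picture in a fixed degree $d$ is linear algebra: choosing a basis of $I_d$ and expanding $g$ applied to it in the monomial basis of $R_d$ produces a matrix $A(g)$ whose entries are polynomials in the coordinates of $g$, and the leading term monomials of $g(I)_d$ are the columns of the greedily first (with respect to $>$) size $\dim I_d$ submatrix with nonzero maximal minor; the ``generic'' leading term set is then obtained by taking the greedily first maximal minor that is not identically zero, so $\text{In}_{>}(g(I))_d$ is constant off a hypersurface. To make this uniform over all degrees at once — the one subtle point at this stage — I would pass to one genuinely generic change of coordinates: adjoin indeterminates $y_{ij}$, let $L=K(\{y_{ij}\})$ and $Y=(y_{ij})\in\text{GL}_m(L)$, form $\widetilde I=Y(I\otimes_K L)$ inside $L[x_1,\dots,x_m]$, fix a reduced Gröbner basis $p_1,\dots,p_s$ of $\widetilde I$ with respect to $>$, and, after clearing denominators, assume the $p_t$ have coefficients in a suitable localization $A$ of $K[\{y_{ij}\}]$. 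Let $J\subseteq R$ be the monomial ideal $(\text{in}_{>}(p_1),\dots,\text{in}_{>}(p_s))$, and let $U$ be the nonempty open locus where $\det$ and the finitely many coefficients occurring in the $p_t$ and in the $S$-polynomial reductions witnessing Buchberger's criterion are all nonzero. For $g\in U$, specializing $Y\mapsto g$ carries $\widetilde I$ to $g(I)$ and carries $\{p_t\}$ to a Gröbner basis of $g(I)$ with the same leading monomials, so $\text{In}_{>}(g(I))=J$; routing through one Gröbner basis controls all degrees simultaneously, which is exactly why no separate degree bound is needed.

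The main obstacle is Borel--fixedness of $J$: showing $b(J)=J$ for every $b$ in the Borel subgroup $B$. Since $J$ is a monomial ideal it is fixed by the diagonal torus $T$, and $B$ is generated by $T$ together with one-parameter unipotent subgroups, so it suffices to treat those. The guiding idea is to realize $J$ as a one-parameter degeneration of a generic translate of $I$ that is compatible with $>$: pick a one-parameter subgroup $\gamma$ of $T$ whose weights realize $>$ up to the top degree of a generating set of $J$, so that $\text{In}_{>}(K)=\lim_t \gamma(t)\cdot K$ for the relevant ideals and hence $J=\lim_t \gamma(t)\,g(I)$ for generic $g$, and then exploit that conjugating any $b\in B$ by $\gamma(t)$ degenerates it into $T$, against which monomial ideals are invariant. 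Turning this into a rigorous argument is where the genericity of $g$ is used essentially and not just cosmetically: one must control how the one-parameter limit defining $J$ interacts with the conjugation, equivalently work inside the closure of the $\text{GL}_m$-orbit of $I$ (in the relevant product of Grassmannians, or in the Hilbert scheme) and check that the relevant closed orbit — equivalently, that the $B$-fixed point supplied by Borel's fixed-point theorem for the complete $B$-variety $\overline{\text{GL}_m\cdot I}$ — is exactly $J$. For these points I would follow the original arguments of Galligo and of Bayer--Stillman and the exposition in Green's survey (\cite{Galligo74}, \cite{BS87b}, \cite{Green98}).
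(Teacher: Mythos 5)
The paper offers no proof of this statement to compare against: Theorem~\ref{thm:galligo} is quoted from \cite{Galligo74} and \cite{BS87b}, with a pointer to Green's survey, and is used as a black box. So your proposal has to be judged as a standalone argument. Its first half is fine in outline and is the standard one: the degree-by-degree minor analysis, made uniform by passing to the generic matrix $Y=(y_{ij})$ over $L=K(\{y_{ij}\})$, taking a Gr\"obner basis of $Y(I\otimes_K L)$, clearing denominators, and specializing on the open set where the relevant coefficients survive. One small point to tighten: besides the leading coefficients and the S-polynomial reduction witnesses, your open set must also witness the two ideal-membership statements (expressions of the $p_t$ in terms of the translated generators $Y(f_j)$, and divisions of the $Y(f_j)$ by the $p_t$ with zero remainder), since otherwise the specialized $p_t$ are only known to be a Gr\"obner basis of the ideal they generate, not of $g(I)$.

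The genuine gap is in the Borel-fixedness half, which as written is a plan rather than a proof. Borel's fixed-point theorem applied to the closure of the $\text{GL}_m$-orbit of $I$ only produces existence of \emph{some} $B$-fixed point in that closure; such closures typically contain several Borel-fixed monomial ideals (for instance, generic initial ideals with respect to different term orders all occur as degenerations), so the fixed-point theorem cannot by itself identify the fixed point with $J=\text{In}_>(g(I))$, and identifying it is exactly the hard content of the theorem. Your degeneration heuristic also has a direction problem: if $\gamma$ is a one-parameter subgroup of $T$ whose weights realize $>$ (weights decreasing in the variables), then the limit of $\gamma(t)\cdot g(I)$ that produces the initial ideal is the limit in which conjugation $\gamma(t)b\gamma(t)^{-1}$ \emph{expands}, rather than contracts, the upper-triangular unipotent part, so ``conjugating $b$ into $T$ in the same limit'' does not go through as stated and needs the careful bookkeeping that is precisely the Bayer--Stillman argument. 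Since you explicitly defer these points to \cite{Galligo74}, \cite{BS87b}, and \cite{Green98}, the crucial half of the theorem is cited, not proved; the standard way to close it without orbit-closure machinery is the direct computation with elementary upper-triangular matrices acting on $\text{In}_>(g(I))$ for generic $g$ (Green's Theorem 1.27 or Eisenbud's treatment), and incorporating that computation is what your write-up is missing.
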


\begin{defn}
The \textbf{generic initial ideal of $I$}, denoted $\text{gin}_{>}(I)$, is defined to be $\text{In}_{>}(g(I))$ where $g \in U$ is as in Galligo's theorem.
\end{defn}

The \textit{reverse lexicographic order} $>$ is a total ordering on the monomials of $R$ defined by: 
\begin{enumerate}
\item if $|I| =|J|$ then $x^I > x^J$ if there is a $k$ such that $i_m = j_m$ for all $m>k$ and $i_k < j_k$; and
\item if $|I| > |J|$ then $x^I >x^J$.
\end{enumerate}
For example, $x_1^2x_3<x_1x_2^2$.   From this point on, $\text{gin}(I) = \text{gin}_{>}(I)$ will denote the generic initial ideal with respect to the reverse lexicographic order.

%%Notation and conventions
\subsection{The Hilbert Function and Notation}

Recall that the Hilbert function $H_I(t)$ of a homogeneous ideal $I$ is defined by $H_I(t) = \text{dim}_{K} (I_t)$ where $I_t$ denotes the $t^{\text{th}}$ graded piece of $I$.  The following theorem records two of the properties shared by $\text{gin}(I)$ and $I$.  The first statement is a consequence of the fact that Hilbert functions are invariant under making changes of coordinates and taking initial ideals.  The second statement is a result of Bayer and Stillman \cite{BS87}; for a simple proof see Corollary 2.8 of \cite{AhnMigliore07}.

\begin{thm}
\label{thm:commonproperties}
For any homogeneous ideal $I$ in $R$:
\begin{enumerate}
\item the Hilbert functions of $I$ and $\text{gin}(I)$ are equal; and
\item under the reverse lexicographic order, \text{depth}(R/I) = \text{depth}(R/\text{gin}(I)).
\end{enumerate}
\end{thm}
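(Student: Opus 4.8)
The plan is to prove the two statements by quite different means: statement (1) is an elementary consequence of two invariance facts and needs no input beyond Galligo's theorem, whereas statement (2) is the Bayer--Stillman depth-invariance theorem for the reverse lexicographic order, which I would state and cite rather than reprove.

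For (1), first note that every $g \in \text{GL}_m(K)$ acts on $R$ as a graded $K$-algebra automorphism, so in each degree $t$ it restricts to a $K$-linear isomorphism $I_t \cong (g(I))_t$; hence $H_I = H_{g(I)}$. Second, recall the standard fact that for any homogeneous ideal $J$ and any term order $>$, the monomials of $R$ not lying in $\text{In}_>(J)$ descend to a $K$-basis of $R/J$ in each degree (row-reduce a homogeneous $K$-basis of $J_t$ so the leading terms are distinct; what remains in degree $t$ are exactly those ``standard'' monomials). Since $\text{In}_>(J)$ is a monomial ideal, the same monomials form a $K$-basis of $R/\text{In}_>(J)$, so $H_J = H_{\text{In}_>(J)}$. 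Taking $J = g(I)$ for $g$ in the Galligo open set $U$ of Theorem~\ref{thm:galligo} and composing the two equalities gives $H_{\text{gin}(I)} = H_{\text{In}_>(g(I))} = H_{g(I)} = H_I$.

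For (2), I would invoke the result of Bayer and Stillman \cite{BS87} (see \cite[Corollary~2.8]{AhnMigliore07} for a short proof) that reverse lexicographic initial ideals are compatible with passing to a generic linear section: after a generic change of coordinates, quotienting by the last variable $x_m$ commutes with taking the revlex initial ideal, so $x_m$ is a nonzerodivisor on $R/\text{gin}(I)$ exactly when it is one on the generically coordinatized $R/I$, i.e. exactly when $\text{depth}(R/I) > 0$; iterating over $x_m, x_{m-1}, \dots$ shows the two rings admit maximal regular sequences of generic linear forms of the same length, and that common length is the depth. I would simply cite this; the point worth flagging is that it is special to the reverse lexicographic order.

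The only genuine obstacle is thus hidden in statement (2): the compatibility of revlex initial ideals with generic hyperplane sections, which fails for other orders and is exactly what \cite{BS87} supplies. Everything in statement (1) is formal. I would keep the exposition short, since both facts are standard, and record that (1) is what will later let us recover $\text{gin}(I^n)$ from a Hilbert-function computation, while (2) is what guarantees $R/\text{gin}(I^n)$ inherits the depth of $R/I^n$.
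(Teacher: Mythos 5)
Your proposal is correct and follows essentially the same route as the paper, which likewise treats (1) as a formal consequence of the invariance of Hilbert functions under coordinate changes and under passing to initial ideals, and cites Bayer--Stillman \cite{BS87} (via \cite{AhnMigliore07}) for (2) rather than reproving it. The extra detail you supply — the standard-monomial basis argument for (1) and the sketch of revlex compatibility with generic hyperplane sections for (2) — is accurate and consistent with the paper's citations.
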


Throughout this paper, ${s \choose t}=0$ whenever $s\leq 0$ or $t>s$ so that ${s \choose t}$ is always nonnegative.  Under this assumption, the summation and recursive formulas for binomial coefficients hold:

\begin{equation}\label{eq:Summation} \sum_{z=0}^L {z \choose p} = {L+1 \choose p+1}
\end{equation}

\begin{equation}\label{eq:Recursive} {z\choose p} = {z-1 \choose p-1} + {z-1 \choose p}
\end{equation}

As a consequence of Equation \ref{eq:Summation} we have the following identities when $L_1, L_2  > 0$:

\begin{equation}\label{eq:SummationCor1} \sum_{j=L_1}^{L_2} {z+j \choose p} = {z+L_2+1 \choose p+1} - {z+L_1 \choose p+1}
\end{equation}

\begin{equation}\label{eq:SummationCor2} \sum_{j=L_1}^{L_2} {z-j \choose p} = {z-L_1+1 \choose p+1} - {z-L_2 \choose p+1}
\end{equation}

These are the only binomial coefficient identities that will be used in our later calculations.

Finally, since most of our work will only involve the first two variables $x_1$ and $x_2$ of $K[x_1, \dots, x_m]$, we will set $x_1 = x$ and $x_2 = y$. 
\section{Structure of ideals in the generic initial system}

A homogeneous ideal $I = (f_{\alpha}, f_{\beta})$ is a \textit{complete intersection of type} $(\alpha, \beta)$ if $f_{\alpha}, f_{\beta}$ is a regular sequence on $R$, $\text{deg}(f_{\alpha}) = \alpha$, and $\text{deg}(f_{\beta}) = \beta$.  Since $f_{\alpha}$ and $f_{\beta}$ are homogeneous, $f_{\beta}, f_{\alpha}$ is also a regular sequence; therefore, we may assume that $\alpha \leq \beta$. Throughout this section we assume that $I$ is such a complete intersection.

%%%Structure of the gin
\subsection{Structure of $\text{gin}(I^n)$}\label{sec:ginstructure}
The goal of this subsection is to describe the general structure of the reverse lexicographic generic initial ideals $\text{gin}(I^n)$ for a complete intersection $I$ of type $(\alpha, \beta)$.  In particular, we will prove the following theorem.

\begin{thm}  
\label{thm:ginstructure}
Let $I$ be a complete intersection of type $(\alpha,\beta)$ in $R = K[x_1, \dots, x_m]$ generated by the homogeneous polynomials $f_{\alpha}$ and $f_{\beta}$, and suppose that $A_n$ is the set of minimal monomial generators of $\text{gin}(I^n)$.  Then, setting $x=x_1$ and $y=x_2$, 
$$A_n = \{x^k, x^{k-1}y^{\lambda_{k-1}}, x^{k-2}y^{\lambda_{k-2}}, \dots, xy^{\lambda_1}, y^{\lambda_0}\}$$
where 
\begin{itemize}
\item[ (i) ] $\lambda_0  \gneq \lambda_1 \gneq \cdots \gneq \lambda_{k-2} \gneq \lambda_{k-1}$;
\item[ (ii) ]  $k=n\alpha$;
\item[ (iii) ]  $\lambda_0 = n\beta+\alpha-1$; and
\item[ (iv) ]  $\lambda_{k-1} = \beta-\alpha+1.$
\end{itemize}
\end{thm}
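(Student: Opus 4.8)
The plan is to exploit the fact that $\text{gin}(I^n)$ is strongly stable together with the depth and Hilbert-function invariance recorded in Theorem \ref{thm:commonproperties}. First I would observe that since $f_\alpha, f_\beta$ is a regular sequence of length $2$ in $R$, the ring $R/I^n$ has depth $m-2$: indeed $I^n$ is generated by the products $f_\alpha^a f_\beta^b$ with $a+b = n$, and one checks (e.g.\ from the Rees algebra, or directly since the associated primes of $I^n$ are contained in those of $I$ for a complete intersection) that $x_3, \dots, x_m$ form a regular sequence on $R/I^n$ while $x_3, \ldots, x_m, x_{m+1}$ cannot be extended. By Theorem \ref{thm:commonproperties}(2), $\text{depth}(R/\text{gin}(I^n)) = m-2$ as well. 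For a strongly stable monomial ideal $J$, $x_j$ is a nonzerodivisor on $R/J$ if and only if $x_j$ appears in no minimal generator; hence the variables $x_3, \dots, x_m$ do not divide any minimal generator of $\text{gin}(I^n)$, so every minimal generator lies in $K[x_1, x_2] = K[x,y]$. A strongly stable monomial ideal in two variables whose generators all involve only $x,y$ has, by the stability condition applied to the exponent of $x$, exactly the shape $\{x^k, x^{k-1}y^{\lambda_{k-1}}, \dots, xy^{\lambda_1}, y^{\lambda_0}\}$ with $\lambda_0 > \lambda_1 > \cdots > \lambda_{k-1} \geq 1$ for some $k$; this gives the general form of $A_n$ together with (i).

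Next I would pin down the three numerical data (ii), (iii), (iv) by comparing Hilbert functions via Theorem \ref{thm:commonproperties}(1). For (ii): $x^k$ is the unique minimal generator that is a pure power of $x$, and $k$ is the smallest degree $t$ for which $(\text{gin}(I^n))_t$ contains a monomial not divisible by $y$; since $\text{gin}$ commutes with saturation-type operations only loosely, the cleanest route is to note that $k$ equals the largest power of $x$ appearing, which by strong stability equals the Castelnuovo–Mumford-type quantity detectable from the Hilbert polynomial. Concretely, $R/I^n$ has Krull dimension $m-2$ and multiplicity $e(R/I^n) = \binom{n+1}{2}\alpha\beta$ coming from the complete intersection; matching this against the multiplicity of $R/\text{gin}(I^n)$, which for the above monomial ideal equals $\lambda_0 + \lambda_1 + \cdots + \lambda_{k-1}$, while the "top" piece forces $k = n\alpha$ because modulo $(x_3, \ldots, x_m)$ the ideal $I^n$ becomes (the image of) a power of a complete intersection in two variables whose generic initial ideal's $x$-socle degree is $n\alpha$. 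For (iii): $\lambda_0$ is the degree of the minimal generator $y^{\lambda_0}$, equivalently the smallest $t$ with $y^t \in \text{gin}(I^n)$; equivalently it is determined by the first $t$ at which the Hilbert function of $R/I^n$ restricted to the last variable stabilizes, and a direct computation with the Hilbert series of $R/I^n$ in two variables gives $\lambda_0 = n\beta + \alpha - 1$. For (iv): $\lambda_{k-1}$ is the degree of the generator $x^{k-1}y^{\lambda_{k-1}}$, which I would extract from the graded Betti numbers or from the socle; since $R/\text{gin}(I^n)$ and $R/I^n$ share Hilbert function, and $R/I^n$ has a well-understood resolution (a suitable Eagon–Northcott/Koszul-type complex for the power of a 2-generated complete intersection), one reads off that the regularity contribution from the $x^{k-1}$-row is $\beta - \alpha + 1$.

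I expect the main obstacle to be item (iv), the identification $\lambda_{k-1} = \beta - \alpha + 1$: unlike $\lambda_0$ and $k$, which are visible directly from the Hilbert series (socle degree of $R/I^n$ modulo the last variable, and the Krull-dimension/multiplicity bookkeeping respectively), the smallest gap $\lambda_{k-1}$ is a more delicate invariant that is not determined by the Hilbert function of $R/I^n$ alone — it genuinely uses that $\text{gin}(I^n)$ is the \emph{reverse lexicographic} gin, so that in codimension $2$ the resolution of $R/\text{gin}(I^n)$ has the same Betti numbers as that of $R/I^n$ (Bayer–Stillman), and $\beta - \alpha + 1$ emerges as the difference between the top generator degree $n\beta + (\alpha-1) \cdot$ something and the degree $k = n\alpha$. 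The rest of the argument — reducing to two variables via the depth statement, and the general shape with strict inequalities — is routine once the depth computation for $R/I^n$ is in hand, and that depth computation is itself standard for powers of complete intersections (they are Cohen–Macaulay of the expected codimension).
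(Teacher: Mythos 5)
Your reduction to two variables and the derivation of the shape in (i) essentially coincide with the paper's argument: depth is preserved by the reverse lexicographic gin (Theorem \ref{thm:commonproperties}), the Herzog--Srinivasan criterion (Proposition \ref{prop:DMrelations}) then confines the minimal generators to $K[x,y]$, and strong stability gives the staircase with strictly decreasing $\lambda_i$. For (ii) the correct and sufficient observation is simply that $k$ is the initial degree of $\text{gin}(I^n)$, which equals the initial degree $n\alpha$ of $I^n$ because the Hilbert functions agree; your multiplicity digression does not help here, since matching $e(R/I^n)=\binom{n+1}{2}\alpha\beta$ against $\sum_i\lambda_i$ yields only one equation and cannot isolate $k$.

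The genuine gap is in (iv), which you rightly identify as the crux. Your key assertion --- that for the revlex gin ``the resolution of $R/\text{gin}(I^n)$ has the same Betti numbers as that of $R/I^n$'' --- is false: by Proposition \ref{prop:Inres} the ideal $I^n$ has exactly $n+1$ minimal generators, while $\text{gin}(I^n)$ has $n\alpha+1$, so already $\beta_{0}$ differs. Bayer--Stillman gives preservation of depth and regularity only; the correct relation between the full Betti tables is the Cancellation Principle, by which $\beta_{i,j}(I^n)$ is obtained from $\beta_{i,j}(\text{gin}(I^n))$ by consecutive cancellations. The argument you are missing is the identification of which Betti numbers of $\text{gin}(I^n)$ \emph{cannot} cancel: from the Eliahou--Kervaire-type resolution (Proposition \ref{prop:Jres}) one sees $\beta_{1,k}=\beta_{1,\lambda_{k-1}+k-1}=0$, so $\beta_{0,k}$ and $\beta_{0,\lambda_{k-1}+k-1}$ survive every cancellation, and these are the two smallest generator degrees; matching them with the two smallest shifts $n\alpha$ and $(n-1)\alpha+\beta$ of $\mathcal{H}_0$ forces $\lambda_{k-1}+k-1=(n-1)\alpha+\beta$, i.e.\ $\lambda_{k-1}=\beta-\alpha+1$. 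Your proposed source of the number, ``$n\beta+(\alpha-1)\cdot$ something minus $n\alpha$,'' points at the wrong shifts (the largest ones, which govern $\lambda_0$, not $\lambda_{k-1}$). Finally, your parenthetical claim that $\lambda_{k-1}$ is not determined by the Hilbert function of $I^n$ is also incorrect: once the shape of $A_n$ is established, the ideal --- hence every $\lambda_i$ --- is uniquely determined by its Hilbert function (this is exactly Lemma \ref{lem:uniquehilbert}, and it is the engine of the paper's main proofs), although the Cancellation Principle is the more efficient way to read off the extreme invariants $\lambda_0$ and $\lambda_{k-1}$.
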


We will refer to the $\lambda_i$ as the \textit{invariants} of $\text{gin}(I^n)$.  This theorem will be proven in several parts.  First, no matter how many variables the ambient ring $R$ has, the minimal generators of these generic initial ideals will only involve the variables $x_1$ and $x_2$.

\begin{lem}\label{lem:varsingenset}  
Let $I$ be a type $(\alpha, \beta)$ complete intersection in $R$ and let $A_n$ denote the set of minimal monomial generators of $\text{gin}(I^n)$.  Then the elements of $A_n$ are contained in $K[x_1, x_2]$.  Furthermore, $A_n$ contains a power of $x_2$, say $x_2^{\lambda_0}$, and no element of $A_n$ is of degree greater than $\lambda_0$. 
\end{lem}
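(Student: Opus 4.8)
The plan is to exploit the fact that reverse lexicographic generic initial ideals behave well with respect to depth (Theorem \ref{thm:commonproperties}(2)) and that all the variables $x_3, \dots, x_m$ act as a regular sequence modulo $I^n$. First I would observe that since $I$ is generated by a regular sequence $f_\alpha, f_\beta$ in $R = K[x_1,\dots,x_m]$, the quotient $R/I$ has depth $m-2$, and the same is true of $R/I^n$: the powers of a complete intersection are Cohen--Macaulay of codimension $2$, so $\operatorname{depth}(R/I^n) = m-2$. By Theorem \ref{thm:commonproperties}(2), $\operatorname{depth}(R/\operatorname{gin}(I^n)) = m-2$ as well. For a strongly stable (Borel-fixed, char $0$) monomial ideal $J$, the depth of $R/J$ equals $m - 1 - \max\{ i : x_i \text{ divides some minimal generator of } J\}$; equivalently, $x_{m}, x_{m-1}, \dots$ form a maximal regular sequence on $R/J$ precisely down to the largest index appearing. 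Applying this with $J = \operatorname{gin}(I^n)$ forces the largest variable index occurring in a minimal generator to be exactly $2$, i.e. every minimal generator lies in $K[x_1,x_2]$.

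Next I would pin down the second assertion. Since $\operatorname{gin}(I^n)$ is a nonzero proper ideal all of whose minimal generators lie in $K[x_1,x_2]$, strong stability gives that the set of minimal generators, written in terms of $x=x_1$ and $y=x_2$, has the staircase form $x^k, x^{k-1}y^{a_{k-1}}, \dots, x y^{a_1}, y^{a_0}$ for suitable exponents; in particular a pure power $y^{\lambda_0}$ of $x_2$ occurs (strong stability applied to the minimal generator of smallest $x$-degree pushes it all the way down to a power of $y$). Finally, among all minimal generators of a strongly stable ideal in two variables arranged in this staircase, the generator of largest total degree is exactly the pure power $y^{\lambda_0}$: any other generator $x^{i}y^{a_i}$ with $i \geq 1$ satisfies $i + a_i < \lambda_0$, since $a_0 > a_1 > \cdots$ and each step down in the $x$-exponent costs at least one in total degree. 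Hence no minimal generator has degree exceeding $\lambda_0$.

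The main obstacle I anticipate is justifying the depth formula for strongly stable monomial ideals — namely that $\operatorname{depth}(R/J) = m-1-p$ where $p$ is the largest index of a variable dividing a minimal generator of $J$ — and using it in the correct direction to conclude the index is exactly $2$ (not merely $\leq 2$; the lower bound, that $x_2$ genuinely appears, follows because $R/\operatorname{gin}(I^n)$ is not Cohen--Macaulay of codimension $0$ or $1$, as $I^n$ has codimension $2$). This formula is standard (it follows from the Eliahou--Kervaire resolution, or can be derived directly by induction on the number of variables using the short exact sequence for colon ideals), and I would cite it rather than reprove it. The remaining steps — the staircase description and the degree comparison — are elementary consequences of strong stability once the two-variable reduction is in place.
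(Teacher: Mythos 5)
Your reduction to two variables is essentially the paper's own argument: the paper invokes Herzog--Srinivasan (Proposition \ref{prop:DMrelations}), which for a Borel-fixed ideal $J$ gives $\operatorname{depth}(R/J) = m - M(J)$, where $M(J)$ is the largest index of a variable appearing in a minimal generator. (Your version of this formula, $m-1-M(J)$, is off by one; as written it would force $M=1$ rather than $M=2$. The correct statement comes from the Eliahou--Kervaire resolution, $\operatorname{pd}(R/J)=M(J)$, plus Auslander--Buchsbaum.) Combined with $\operatorname{depth}(R/\operatorname{gin}(I^n)) = \operatorname{depth}(R/I^n) = m-2$ from Theorem \ref{thm:commonproperties}(2), this does put all minimal generators in $K[x_1,x_2]$ and shows that $x_2$ genuinely occurs in one of them.

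The genuine gap is your justification that a \emph{pure power} of $x_2$ is among the generators. You claim that ``strong stability applied to the minimal generator of smallest $x$-degree pushes it all the way down to a power of $y$,'' but strong stability is the exchange $f \mapsto x_j f/x_i$ for $j<i$: it trades $x_2$ for $x_1$, i.e.\ it pushes monomials \emph{toward} powers of $x_1$, never toward powers of $x_2$. The ideal $(x_1^2,\, x_1x_2)$ is strongly stable, has $M=2$ and $\operatorname{depth}(R/J)=m-2$, yet contains no power of $x_2$; so neither strong stability nor the depth computation can deliver this conclusion. What is missing is the \emph{dimension}: $\dim(R/I^n)=m-2$, hence $\dim(R/\operatorname{gin}(I^n))=m-2$ (equal Hilbert functions), and the other half of Proposition \ref{prop:DMrelations} gives $D(\operatorname{gin}(I^n))=2$, i.e.\ some power of $x_2$ lies in the ideal; the least such power is then automatically a minimal generator. (Equivalently: if no power of $x_2$ were present, every generator would be divisible by $x_1$ and $\dim(R/\operatorname{gin}(I^n))\geq m-1$.) Once this is repaired, your degree comparison for the final assertion is sound and is a legitimate alternative to the paper's shorter argument, which observes that strong stability forces every degree-$\lambda_0$ monomial of $K[x_1,x_2]$ into the ideal, so no minimal generator can exceed that degree.
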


This lemma is a consequence of the following result of Herzog and Srinivasan (see Lemma 3.1 of \cite{HS98}) which relates the depth and dimension of a Borel-fixed monomial ideal to the variables appearing in its minimal generating set.  

\begin{prop}
\label{prop:DMrelations}
Let $J$ be a Borel-fixed monomial ideal in $R$ and define 
$$D(J) := \text{max} \{t | x_t^j \in J \text{ for some positive integer } j \}$$
and 
$$M(J) := \text{max} \{t | x_t \text{ appears in some minimal generator of } J \}.$$
Then 
\begin{enumerate}
\item $\text{dim}(R/J) = m-D(J)$; and
\item $\text{depth}(R/J) = m-M(J)$.
\end{enumerate}
\end{prop}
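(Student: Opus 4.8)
The plan is to use that, since $K$ has characteristic $0$, the Borel-fixed ideal $J$ is strongly stable, and to extract both statements from the single exchange property: if $u \in J$ is a monomial divisible by $x_i$, then $(x_{i'}/x_i)u \in J$ for every $i' < i$. First I would record the two consequences of strong stability that drive the whole proof. Iterating the exchange move on a pure power shows that $x_t^e \in J$ forces $x_{t'}^e \in J$ for all $t' \le t$; hence the set of indices carrying a pure power in $J$ is exactly $\{1, \dots, D(J)\}$, and $J$ contains a pure power of each of $x_1, \dots, x_{D(J)}$. Iterating the exchange move on an arbitrary monomial $u \in J$, pushing each variable down to the smallest-index variable $x_t$ dividing $u$, shows $x_t^{\deg u} \in J$, so $t \le D(J)$; consequently no monomial of $J$ --- in particular no minimal generator --- lies in $K[x_{D(J)+1}, \dots, x_m]$.

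For part (1), these two facts pin down the minimal primes. Since every minimal generator involves one of $x_1, \dots, x_{D(J)}$, we have $J \subseteq (x_1, \dots, x_{D(J)})$, a prime of height $D(J)$. Conversely, any prime $P \supseteq J$ contains the pure powers $x_1^{e_1}, \dots, x_{D(J)}^{e_{D(J)}} \in J$, hence contains $x_1, \dots, x_{D(J)}$ and so has height at least $D(J)$. Therefore $\text{ht}(J) = D(J)$, and $\text{dim}(R/J) = m - D(J)$.

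For part (2), set $M = M(J)$. The variables $x_{M+1}, \dots, x_m$ appear in no minimal generator, so $J$ is extended from $J' := J \cap R'$ with $R' = K[x_1, \dots, x_M]$, giving $R/J \cong (R'/J')[x_{M+1}, \dots, x_m]$. These $m-M$ transcendental variables form a regular sequence on $R/J$ whose quotient is $R'/J'$, so $\text{depth}(R/J) = (m-M) + \text{depth}(R'/J')$. It then remains to show $\text{depth}(R'/J') = 0$, i.e. that $\mathfrak{m}' = (x_1, \dots, x_M)$ is an associated prime of $R'/J'$. Choosing a minimal generator $w$ of $J'$ divisible by $x_M$ (one exists by the definition of $M$) and setting $u = w/x_M$, minimality of $w$ gives $u \notin J'$, while $x_M u = w \in J'$ and the exchange move gives $x_{i}u = (x_i/x_M)w \in J'$ for every $i < M$. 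Thus $u$ represents a nonzero socle element, so $\text{depth}(R'/J') = 0$ and $\text{depth}(R/J) = m - M$.

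I expect part (2) to be the main obstacle, and within it the reduction $\text{depth}(R/J) = (m-M) + \text{depth}(R'/J')$: one must check that $x_{M+1}, \dots, x_m$ genuinely form a regular sequence on $R/J$ (clear once $J$ is seen to be extended from $R'$) and that depth behaves additively under adjoining polynomial variables. An alternative to the socle computation would be to invoke the Eliahou--Kervaire resolution, from which $\text{pd}(R/J) = M(J)$ and then the Auslander--Buchsbaum formula yield the result; but the direct socle argument above is self-contained and avoids constructing the resolution. I should also dispose of the degenerate cases $J = 0$ and $J = R$ separately, where the claimed formulas hold by inspection.
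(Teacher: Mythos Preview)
Your argument is correct. Both parts follow cleanly from the strong stability exchange property, and the socle construction in part~(2) is the right way to pin down the depth without invoking heavier machinery.

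There is, however, nothing to compare against: the paper does not prove this proposition. It is quoted as a known result of Herzog and Srinivasan (Lemma~3.1 of \cite{HS98}) and used as a black box in the proof of Lemma~\ref{lem:varsingenset}. So your write-up supplies a self-contained proof where the paper only gives a citation. The alternative you mention --- reading off $\text{pd}(R/J) = M(J)$ from the Eliahou--Kervaire resolution and applying Auslander--Buchsbaum --- is in fact closer to how this result is usually packaged in the literature, but your direct socle argument is shorter and avoids importing the resolution.

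One small remark on part~(1): your sentence ``pushing each variable down to the smallest-index variable $x_t$ dividing $u$, shows $x_t^{\deg u} \in J$, so $t \le D(J)$'' only establishes that the \emph{smallest} index appearing in $u$ is at most $D(J)$, not that every index is. That weaker conclusion is exactly what you need for $J \subseteq (x_1, \dots, x_{D(J)})$, so the argument is fine, but the phrasing ``no monomial of $J$ lies in $K[x_{D(J)+1}, \dots, x_m]$'' is the accurate summary rather than anything stronger.
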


Note that when $I$ is a complete intersection of type $(\alpha, \beta)$ in $R$,
$$\text{dim}(R/I^n) = \text{depth}(R/I^n) = m-2$$
for all $n \geq 1$.
It then follows by Theorem \ref{thm:commonproperties} that the depth and dimension of $R/\text{gin}(I^n)$ are equal to $m-2$ as well.

\begin{proof}[Proof of Lemma \ref{lem:varsingenset}]
By Proposition \ref{prop:DMrelations},
$$D(\text{gin}(I^n)) = m-\text{dim}(R/\text{gin}(I^n)) = 2 = m- \text{depth}(R/\text{gin}(I^n)) = M(\text{gin}(I^n)).$$
This means that the minimal monomial generating set $A_n$ of $\text{gin}(I^n)$ is contained in $S=K[x_1, x_2]$ and that $A_n$ contains a power of $x_2$, say $x_2^{\lambda_0}$. 
The fact that $\text{gin}(I^n)$ is strongly stable means that we can replace any number of $x_2$ variables in $x_2^{\lambda_0}$ with $x_1$ and still get an element of $\text{gin}(I^n)$.  Therefore, any monomial $x^J \in S = K[x_1, x_2]$ of degree $\lambda_0$ is also contained in $\text{gin}(I^n)$.  Now it is clear that the set of minimal monomial generators, $A_n \subset S$, cannot contain any element of degree greater than $\lambda_0$.
\end{proof}

\begin{proof}[Proof of Theorem \ref{thm:ginstructure} (i)]
By Lemma \ref{lem:varsingenset}, $A_n \subset K[x,y]$ and $y^{\lambda_0} \in A_n$.  Let $m_k$ be a monomial of least degree $k$ in $\text{gin}(I^n)$.  Since $\text{gin}(I^n)$ is strongly stable, every variable appearing in $m_k$ can be replaced by $x$ and still stay inside of $\text{gin}(I^n)$; thus, $x^k \in \text{gin}(I^n)$ and, as a least degree element, $x^k$ is also in $A_n$.  Define $\lambda_i$ by $\lambda_i = \min\{t | x^iy^t \in \text{gin}(I^n)\}$ so that
$$A_n \subset \{x^k, x^{k-1}y^{\lambda_{k-1}}, x^{k-2}y^{\lambda_{k-2}}, \dots, xy^{\lambda_1}, y^{\lambda_0}\} \subset \text{gin}(I^n).$$

Since $x^iy^{\lambda_i}$ is in the strongly stable ideal $\text{gin}(I^n)$,
$$\frac{x(x^iy^{\lambda_i})}{y} = x^{i+1}y^{\lambda_i-1} \in \text{gin}(I^n)$$
for all $i = 1, \dots, k-2$.  This condition holds if and only if $\lambda_i-1 \geq \lambda_{i+1}$, or $\lambda_i \gneq \lambda_{i+1}$.  Therefore, the $\lambda_i$s are strictly decreasing and $\lambda_{k-1} \geq 1$.  Thus,
$$A_n= \{x^k, x^{k-1}y^{\lambda_{k-1}}, x^{k-2}y^{\lambda_{k-2}}, \dots, xy^{\lambda_1}, y^{\lambda_0}\}.$$
\end{proof}

\begin{proof}[Proof of Theorem~\ref{thm:ginstructure}(ii)]
Note that, since $\alpha \leq \beta$, the homogeneous polynomial $f_{\alpha}^n$ is an element of $I^n$ of the smallest degree. Under a general change of coordinates $g$, the smallest degree element of $g(I^n)$ is also of degree $n\alpha$ and its initial term is of degree $n\alpha$.  Thus, the smallest degree element of  $\text{in}(g(I^n)) = \text{gin}(I^n)$ has degree $n\alpha$ and, since $\text{gin}(I^n)$ is strongly stable, this is equal to the power of $x$ in $A_n$.
\end{proof}

To determine the values of $\lambda_0$ and $\lambda_{k-1}$ we will compare the \textit{Betti numbers} of $I^n$ and $\text{gin}(I^n)$ using `The Cancellation Principle'. Let
$$0 \rightarrow F_m \rightarrow \cdots \rightarrow F_1 \rightarrow F_0 \rightarrow J \rightarrow 0$$
be the unique minimal free graded resolution of a homogeneous ideal $J$.  The graded Betti numbers of $J$, $\beta_{i,j}(J)$, are defined by $F_i = \bigoplus_j R(-j)^{\beta_{i,j}(J)}$.
A \textit{consecutive cancellation} takes a sequence $\{\beta_{i,j}\}$ to a new sequence by replacing $\beta_{i,j}$ by $\beta_{i,j}-1$ and $\beta_{i+1,j}$ by $\beta_{i+1,j}-1$.  The `Cancellation Principle' says that the graded Betti numbers $\beta_{i,j}(I^n)$ of $I^n$ can be obtained from the graded Betti numbers $\beta_{i,j}(\text{gin}(I^n))$ of $\text{gin}(I^n)$  by making a series of consecutive cancellations (see Corollary 1.21 of \cite{Green98}).

In order to apply the Cancellation Principle to find $\lambda_{k-1}$ and $\lambda_0$, we need to know the Betti numbers of $I^n$ and an ideal having the same form as $\text{gin}(I^n)$; this information is recorded in the following two propositions.

 \begin{prop}[\cite{Guardo05}]\label{prop:Inres}
 Suppose that $I$ is a complete intersection of type $(\alpha,\beta)$.  Then the minimal free resolution of $I^n$ is of the form
\begin{equation*}\label{Inres}
0\rightarrow \mathcal{H}_1 \rightarrow\mathcal{H}_0 \rightarrow I^n \rightarrow 0
\end{equation*}
where 
$$\mathcal{H}_1 = \bigoplus_{p=1}^n R(-\alpha p - \beta (n+1-p))$$
and
$$\mathcal{H}_0 = \bigoplus_{p=0}^n R(-\alpha p-\beta (n-p))=R(-\alpha n) \oplus \bigoplus_{p=0}^{n-1} R(-\alpha p -\beta (n-p)).$$
\end{prop}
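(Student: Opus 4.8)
The plan is to produce an explicit Hilbert--Burch matrix for $I^n$ and then quote the Hilbert--Burch theorem. Write $f = f_\alpha$ and $g = f_\beta$, so that $I^n = (f^p g^{n-p} : 0 \le p \le n)$, with $f^p g^{n-p}$ a form of degree $p\alpha + (n-p)\beta$. Let $\phi$ be the $(n+1)\times n$ matrix over $R$ with $g$ on the main diagonal, $-f$ on the subdiagonal, and zeros elsewhere:
\[
\phi = \begin{pmatrix} g & & & \\ -f & g & & \\ & -f & \ddots & \\ & & \ddots & g \\ & & & -f \end{pmatrix}.
\]
Each of its $n$ columns encodes one of the evident syzygies $g\cdot f^{p}g^{n-p} = f\cdot f^{p-1}g^{n-p+1}$ ($1 \le p \le n$) among consecutive generators, so $\phi$ is a complex over the map $R^{n+1} \to I^n$ sending the standard basis onto the generators.

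First I would compute the $n \times n$ minors of $\phi$. Because of its zero pattern, deleting the $i$-th row splits $\phi$ into a lower-triangular block of size $i-1$ with $g$ along the diagonal and an upper-triangular block of size $n+1-i$ with $-f$ along the diagonal; hence this minor equals $\pm f^{\,n+1-i} g^{\,i-1}$, and so $I_n(\phi) = (f^p g^{n-p} : 0 \le p \le n) = I^n$. Next I would check the grade condition: since $I^n$ has the same radical as $I$, one has $\operatorname{grade} I_n(\phi) = \operatorname{grade} I = 2$, the last equality because $I = (f,g)$ is a codimension-$2$ complete intersection in the Cohen--Macaulay ring $R$ (so grade and height agree and both equal $2$).

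With the grade condition in hand, the Hilbert--Burch theorem shows that
\[
0 \longrightarrow R^n \xrightarrow{\ \phi\ } R^{n+1} \xrightarrow{\ \psi\ } R \longrightarrow R/I^n \longrightarrow 0
\]
is exact, where $\psi$ sends $e_i$ to $(-1)^i$ times the minor of $\phi$ obtained by deleting row $i$; equivalently, $0 \to R^n \xrightarrow{\phi} R^{n+1} \to I^n \to 0$ is a free resolution of $I^n$. It is minimal because every entry of $\phi$ is a homogeneous form of positive degree ($\alpha$ or $\beta$), hence not a unit; in particular the $f^p g^{n-p}$ are a minimal generating set and the $n$ columns of $\phi$ are minimal first syzygies. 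Finally I would read off the internal degrees: the basis vector of $R^{n+1}$ mapping onto $f^p g^{n-p}$ lies in degree $p\alpha + (n-p)\beta$, which gives $\mathcal{H}_0 = \bigoplus_{p=0}^n R(-\alpha p - \beta(n-p))$; and the column of $\phi$ carrying the syzygy $g \cdot f^p g^{n-p} = f \cdot f^{p-1}g^{n-p+1}$ lives in degree $p\alpha + (n-p+1)\beta$, which gives $\mathcal{H}_1 = \bigoplus_{p=1}^n R(-\alpha p - \beta(n+1-p))$, exactly as claimed.

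The only substantial ingredient is the Hilbert--Burch theorem itself; the places that require care are the sign and index bookkeeping in the minor computation and the verification that $\operatorname{grade} I^n = 2$ (which rests on $R$ being Cohen--Macaulay). As an alternative one can induct on $n$: from the short exact sequence $0 \to I^{n+1} \to I^n \to I^n/I^{n+1} \to 0$, the isomorphism $\operatorname{gr}_I R \cong (R/I)[T_1,T_2]$ valid because $I$ is generated by a regular sequence, and the Koszul resolution of $R/I$, a mapping-cone construction produces a length-one resolution of $I^{n+1}$ from one of $I^n$, and a comparison of internal degrees rules out consecutive cancellations; but the Hilbert--Burch route above is the most direct.
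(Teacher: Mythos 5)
Your proof is correct, but note that the paper itself offers no argument here: Proposition \ref{prop:Inres} is simply cited from \cite{Guardo05}, so you have supplied a self-contained proof of a result the author defers to the literature. Your route via Hilbert--Burch is the standard and arguably cleanest one, and all the checkable details go through: the generators $f^pg^{n-p}$ of $I^n$ sit in degrees $p\alpha+(n-p)\beta$, matching $\mathcal{H}_0$; the bidiagonal $(n+1)\times n$ matrix $\phi$ does encode the Koszul-type syzygies $g\cdot f^pg^{n-p}=f\cdot f^{p-1}g^{n-p+1}$, whose degrees $p\alpha+(n+1-p)\beta$ match $\mathcal{H}_1$; deleting row $i$ does leave a block-triangular matrix whose determinant is $\pm f^{n+1-i}g^{i-1}$, so $I_n(\phi)=I^n$; and since grade depends only on the radical and $R$ is Cohen--Macaulay, $\operatorname{grade}I_n(\phi)=\operatorname{grade}I=\operatorname{ht}I=2$, which is exactly the hypothesis of Hilbert--Burch. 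Minimality follows, as you say, because every entry of $\phi$ is homogeneous of positive degree. Two small points worth making explicit if you write this up: you should invoke the graded version of Hilbert--Burch (everything in sight is homogeneous, so this is immediate), and the signs $(-1)^i$ in $\psi$ are harmless since rescaling basis vectors by units does not affect the twists. Your alternative sketch via $\operatorname{gr}_IR\cong(R/I)[T_1,T_2]$ and mapping cones would also work and is closer in spirit to how one proves the analogous statement for powers of larger complete intersections (where the Eagon--Northcott complex replaces Hilbert--Burch), but for the $2$-generated case the direct determinantal argument you chose is the most economical.
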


\begin{prop}[cf \cite{EK90}]\label{prop:Jres}
The minimal free resolution of \newline $J= (x^k, x^{k-1}y^{\lambda_{k-1}}, \dots, xy^{\lambda_1}, y^{\lambda_0})$ where $\lambda_0 > \lambda_1 > \cdots \lambda_{k-1}$ is of the form 
\begin{equation*}
0 \rightarrow \mathcal{G}_1 \rightarrow \mathcal{G}_0 \rightarrow J \rightarrow 0
\end{equation*}
where
$$\mathcal{G}_1= \bigoplus_{i=0}^{k-1} R(-\lambda_i-i-1)$$
and
$$\mathcal{G}_0 = \big(\bigoplus_{i=0}^{k-1} R(-\lambda_i-i)\big) \oplus R(-k). $$
\end{prop}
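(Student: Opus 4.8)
The plan is to recognize $J$ as a strongly stable (Borel-fixed) monomial ideal and to read off its minimal free resolution from the Eliahou--Kervaire resolution, which is minimal and has an explicit combinatorial description. First I would check strong stability. The ideal $J$ involves only $x_1=x$ and $x_2=y$, so the only condition to verify is that swapping a $y$ for an $x$ in a minimal generator keeps us in $J$; the generator $x^k$ involves no $y$, and for each remaining generator $x^iy^{\lambda_i}$ ($0\le i\le k-1$) the swap produces $x^{i+1}y^{\lambda_i-1}$, which is a multiple of the generator $x^{i+1}y^{\lambda_{i+1}}$ because $\lambda_i-1\ge\lambda_{i+1}$ (here and below $\lambda_k:=0$, and we use that $\lambda_{k-1}\ge 1$, which must hold for the listed monomials to form a minimal generating set at all). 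A one-line divisibility check also confirms that none of the $k+1$ monomials $x^k,x^{k-1}y^{\lambda_{k-1}},\dots,y^{\lambda_0}$ divides another, so they constitute the minimal generating set $G(J)$, and $\mathcal{G}_0$ is free on these $k+1$ generators, in degrees $\lambda_i+i$ ($0\le i\le k-1$) and $k$, matching the stated $\mathcal{G}_0$.

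Next I would invoke the Eliahou--Kervaire formula: for a strongly stable ideal, $\beta_{i,\,i+d}(J)=\sum_{u\in G(J),\ \deg u=d}\binom{\max(u)-1}{i}$, where $\max(u)$ is the largest index of a variable dividing $u$. Here $\max(x^k)=1$, while $\max(x^iy^{\lambda_i})=2$ for $0\le i\le k-1$. For $i=0$ each of the $k+1$ generators contributes $1$ in its own degree, recovering $\mathcal{G}_0$; for $i=1$ each generator $x^iy^{\lambda_i}$ with $0\le i\le k-1$ contributes $\binom{1}{1}=1$ in degree $\lambda_i+i+1$, whereas $x^k$ contributes $\binom{0}{1}=0$, so $\mathcal{G}_1=\bigoplus_{i=0}^{k-1}R(-\lambda_i-i-1)$; and for $i\ge 2$ both $\binom{1}{i}$ and $\binom{0}{i}$ vanish, so the resolution stops. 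Since the Eliahou--Kervaire resolution is minimal, this is exactly the claimed minimal free resolution.

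Should a more self-contained argument be preferred, one can instead proceed by hand, which is essentially what ``cf.\ \cite{EK90}'' amounts to in two variables. By Proposition \ref{prop:DMrelations} together with the Auslander--Buchsbaum formula, $\mathrm{pd}_R(J)=1$, so the resolution has the shape $0\to\mathcal{G}_1\to\mathcal{G}_0\to J\to 0$ with $\mathcal{G}_0$ free on $G(J)$ and $\mathcal{G}_1$ free of rank $|G(J)|-1=k$. Writing $e_i$ for the basis element over $x^iy^{\lambda_i}$, the $k$ elements $\sigma_i:=x\,e_i-y^{\lambda_i-\lambda_{i+1}}e_{i+1}$ ($0\le i\le k-1$) are honest syzygies of degree $\lambda_i+i+1$; a direct reduction of an arbitrary syzygy $\sum a_ie_i$, carried out starting from the $x$-pure end, shows they generate the syzygy module, and since all their coefficients lie in the maximal ideal they are then forced to be a basis, which yields both minimality and the stated degrees of $\mathcal{G}_1$. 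The one point that deserves care in either approach is the bookkeeping around the generator $x^k$: it is the unique minimal generator not divisible by $y$, which is precisely why $\mathcal{G}_0$ carries an extra summand $R(-k)$ with no matching syzygy and why the ranks are $k+1$ and $k$ rather than equal; beyond this and the (routine) checks of strong stability and minimality of $G(J)$, there is no real obstacle.
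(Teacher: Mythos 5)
Your proof is correct and follows exactly the route the paper intends: the paper gives no written proof, simply citing Eliahou--Kervaire, and your argument (verify strong stability, then read off the Betti numbers from the Eliahou--Kervaire formula, with the generator $x^k$ contributing no first syzygy since $\max(x^k)=1$) is precisely what that citation stands in for. The self-contained Taylor-type syzygy argument you sketch as an alternative is also sound and would serve equally well.
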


\begin{proof}[Proof of Theorem~\ref{thm:ginstructure}(iii)]
Since the invariants $\lambda_i$ are strictly decreasing, $\lambda_0 +1 > \lambda_i+i \geq k$ for $i = 0, \dots, k-1$. Thus, if $\{\beta_{i,j}\}$ is the set of graded Betti numbers of $\text{gin}(I^n)$, $\beta_{1, \lambda_0+1} \geq 1$ and $\beta_{0, \lambda_0+1} = 0$ by Proposition \ref{prop:Jres}.  Therefore, no consecutive cancellation can replace $\beta_{1, \lambda_0+1}$ and after any series of consecutive cancellations 
$$\text{max}\{t | \beta_{1,t} \geq 1\} = \lambda_0+1.$$
By Proposition \ref{prop:Inres}, $\alpha + n\beta$ is the largest shift in $\mathcal{H}_1$.  Thus, by the Cancellation Principle, $\lambda_0+1 = \alpha+n\beta$, or
\[\lambda_0=\alpha+n\beta-1. \qedhere \]
\end{proof}

\begin{proof}[Proof of Theorem~\ref{thm:ginstructure}(iv)]
Since the invariants $\lambda_i$ are strictly decreasing and $\lambda_{k-1} \geq 1$, $k \leq \lambda_{k-1} +(k-1) <  \lambda_i+i +1$ for all $i = 0, \dots, k-1$. Thus, if $\{\beta_{i,j}\}$ is the set of graded Betti numbers of $\text{gin}(I^n)$, $\beta_{0,k} \geq 1$, $\beta_{0, \lambda_{k-1}+k-1} \geq 1$, $\beta_{1,k}=0$, and $\beta_{1, \lambda_{k-1}+k-1} = 0$ by Proposition \ref{prop:Jres}. Therefore, no consecutive cancellation can replace $\beta_{0,k}$ or $\beta_{0, \lambda_{k-1}+k-1}$  and, for every $t$ such that $t<k$ or $k<t<\lambda_{k-1}+k-1$, $\beta_{0, t} = 0$ (note that it is possible to have $k = \lambda_{k-1}+k-1$).  

By Proposition \ref{prop:Inres}, the two smallest shifts in $\mathcal{H}_0$ are $n\alpha$ and $\alpha(n-1)+\beta$.  Thus, by the Cancellation Principle, $k=n\alpha$ (as we have seen in the proof of part (ii)) and $\lambda_{k-1} +k-1= \lambda_{n\alpha-1} +n\alpha-1 = \alpha(n-1)+\beta$, or
\[ \lambda_{n\alpha-1} = \beta-\alpha+1. \qedhere\]
\end{proof}

Note that we can write $\lambda_0$ and $\lambda_{k-1}$ in terms of $l := \beta-\alpha$ and $\alpha$ as follows:
$$\lambda_0 = n(\alpha+l)+\alpha-1 = (n+1)\alpha+nl-1$$
$$\lambda_{k-1} = \lambda_{n\alpha-1} = \beta-\alpha+1 = l+1.$$

%%%%%On the Hilbert Functions

\subsection{The Hilbert function of $\text{gin}(I^n)$}
\label{sec:hilbmethod}

The following result tells us that the invariants of $\text{gin}(I^n)$ are completely determined by $H_{\text{gin}(I^n)}(t)$; this observation will be the key to computing these invariants.

\begin{lem}
\label{lem:determineinvariants}
Suppose that we have an ideal $J$ of the form 
$$J = (x^{k}, x^{k-1}y^{\mu_{k-1}}, \dots, xy^{\mu_1}, y^{\mu_0})$$
where the $\mu_i$s are strictly decreasing.  If $H_J(t) = H_{I^n}(t)$ for a type $(\alpha,\beta)$ complete intersection ideal $I$ then 
$$\text{gin}(I^n) = J.$$
\end{lem}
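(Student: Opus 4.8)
The plan is to show that the ideal $J$, which is already monomial and strongly stable (a Borel-fixed ideal of the shape described is automatically strongly stable since $\mu_i$ strictly decreasing forces the exchange property), is forced to equal $\text{gin}(I^n)$ purely by matching Hilbert functions against the known structure of $\text{gin}(I^n)$ from Theorem~\ref{thm:ginstructure}. By that theorem we already know
$$\text{gin}(I^n) = (x^k, x^{k-1}y^{\lambda_{k-1}}, \dots, xy^{\lambda_1}, y^{\lambda_0})$$
with $k = n\alpha$ and the $\lambda_i$ strictly decreasing, and by Theorem~\ref{thm:commonproperties}(1) we have $H_{\text{gin}(I^n)}(t) = H_{I^n}(t) = H_J(t)$ for all $t$. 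So the real content is: \emph{two ideals of this particular ``staircase in two variables'' form with the same Hilbert function must coincide}, i.e. $\mu_i = \lambda_i$ for all $i$ and in particular the top exponent $\mu_0 = \lambda_0$ and the number of generators agree.

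First I would fix a degree $t$ and compute $H_J(t)$ directly in terms of the $\mu_i$. Since $J \subset K[x,y]$ up to the other variables acting freely, and $J$ is strongly stable, the monomials of $J$ in degree $t$ are exactly those $x^a y^b z^c$ (where $z^c$ is a monomial in $x_3,\dots,x_m$) with $x^a y^b \in J$; so it suffices to count, for each degree $d \le t$, the number $N_J(d)$ of monomials $x^a y^b$ of degree $d$ lying in $J$, and then $H_J(t) = \sum_{d} N_J(d)\binom{t-d+m-3}{m-3}$ or, more cleanly, to argue degree-by-degree. The key combinatorial observation is that $x^a y^b \in J$ (with $a+b = d$) iff $a \ge k$, or $a = i < k$ and $b \ge \mu_i$; equivalently the complement in each degree is an ``order ideal under the staircase'' whose size is a piecewise-linear function of $d$ with breakpoints exactly at $d = \mu_i + i$. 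Thus the generating function $\sum_d N_J(d) q^d$, and hence $H_J$, determines the multiset $\{\mu_i + i\}$ together with the multiplicities, which — combined with the strict-decrease condition $\mu_i > \mu_{i+1}$ (equivalently $\mu_i + i \ge \mu_{i+1} + i + 1$, so the values $\mu_i + i$ are strictly increasing in $i$) and the bottom value $x^k$ — recovers each $\mu_i$ individually.

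The cleanest route, which I would actually take, avoids re-deriving all of this: both $J$ and $\text{gin}(I^n)$ are strongly stable, contained (as far as generators go) in $K[x,y]$, have the form with leading generator $x^k$ for the \emph{same} $k = n\alpha$ (this follows because $k$ is read off from $H$ as the least degree in which the ideal is nonzero — or rather the least $t$ with $H(t) = \binom{t-k+m-1}{m-1}$ type behaviour; more simply, $x^k \in J$ forces $N_J(k) \ge 1$, and minimality of $k$ matches the minimal degree of $\text{gin}(I^n)$). Then I argue by induction on $i$ from $i = 0$ downward — no, upward from the known comparison — that $\mu_i = \lambda_i$: suppose $\mu_j = \lambda_j$ for all $j < i$; the first degree $t$ at which $H_J(t)$ exceeds the count contributed by the generators of index $< i$ (i.e.\ the degree at which the $i$-th ``corner'' is crossed) equals $\mu_i + i$ on the $J$ side and $\lambda_i + i$ on the $\text{gin}$ side, and since the Hilbert functions agree these are equal, giving $\mu_i = \lambda_i$. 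Running this to $i = \min(k,\ell)-1$ (where $k$ is the common generator count) and using that the Hilbert functions agree in \emph{all} degrees forces the two generator lists to have the same length, hence $J = \text{gin}(I^n)$.

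\textbf{The main obstacle} I anticipate is making the ``corner-detection'' argument precise: one must carefully define, for a degree $t$, exactly how $H_J(t)$ decomposes into contributions indexed by which generator $x^i y^{\mu_i}$ first puts a given monomial into $J$, and verify that the ``jump'' in the appropriate difference function $H_J(t) - H_J(t-1)$ (or a second difference) occurs precisely at $t = \mu_i + i$ with a jump size determined by $m$ but \emph{independent} of the other $\mu_j$. Once that lemma is in place the induction is mechanical. An alternative that sidesteps the combinatorics entirely: invoke the uniqueness of the lex-segment / strongly-stable ideal structure — since $\text{gin}(I^n)$ is the \emph{unique} strongly stable ideal of this two-variable staircase shape with the given Hilbert function, and $J$ is strongly stable of that shape with the same Hilbert function, they must be equal; but I would still need to justify that uniqueness, which reduces to the same degree-by-degree corner argument, so I expect to include it explicitly rather than cite it.
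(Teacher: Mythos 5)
Your overall strategy is exactly the paper's: combine $H_{\mathrm{gin}(I^n)}(t)=H_{I^n}(t)=H_J(t)$ (Theorem \ref{thm:commonproperties}) with the fact that an ideal of this two-variable staircase shape with strictly decreasing exponents is uniquely determined by its Hilbert function (the paper isolates this as Lemma \ref{lem:uniquehilbert} and proves it by precisely the ``corner-detection'' induction you describe). However, the inductive step as you state it would fail, and the failure traces to a sign error: from $\mu_i>\mu_{i+1}$ one gets $\mu_i+i\geq \mu_{i+1}+(i+1)$, so the generator degrees $\mu_i+i$ are \emph{weakly decreasing} in $i$, not strictly increasing as you claim. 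Consequently the lowest-degree generators are $x^k$ and then $x^{k-1}y^{\mu_{k-1}}$, and the highest-degree one is $y^{\mu_0}$; so if your inductive hypothesis is ``$\mu_j$ is known for all $j<i$,'' the first degree at which $H_J$ exceeds the Hilbert function of $(y^{\mu_0},\dots,x^{i-1}y^{\mu_{i-1}})$ is $k$ (the degree of $x^k$), not $\mu_i+i$, and the corner you detect is the wrong one. The induction must run in the opposite direction: first read off $k=\min\{t:H_J(t)\neq 0\}$, then, assuming $x^k,x^{k-1}y^{\mu_{k-1}},\dots,x^{T}y^{\mu_T}$ are known, the first degree $S_T$ at which $H_J$ exceeds $H_{(x^k,\dots,x^Ty^{\mu_T})}$ equals $\mu_{T-1}+(T-1)$ precisely because all remaining generators have degree $\geq \mu_{T-1}+(T-1)$ by the weak monotonicity above. (Note also the degrees need only be weakly monotone --- ties such as $k=\mu_{k-1}+k-1$ are possible --- but the argument still works since a tie only changes the \emph{value} of the jump, not the location of the first discrepancy.) With the direction corrected, your proof coincides with the paper's.
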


This lemma is an immediate consequence of the following well-known result.  Although it is used in the literature (for example, it has the same content as Lemma 4.2 of \cite{Green98}), we record a complete proof here.

\begin{lem}
\label{lem:uniquehilbert}
An ideal of the form
$$J=(x^k, x^{k-1}y^{\lambda_{k-1}}, \dots, xy^{\lambda_1}, y^{\lambda_0})$$
where $\lambda_0 > \lambda_1 > \cdots > \lambda_{k-1}$ is uniquely determined by its Hilbert function.  
\end{lem}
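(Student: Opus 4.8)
The plan is to show that the invariants $\lambda_0 > \lambda_1 > \cdots > \lambda_{k-1}$ can be recovered one at a time from the values of the Hilbert function $H_J(t)$, so that two ideals of this shape with the same Hilbert function must have the same invariants and hence be equal. The key observation is that an ideal $J$ of this form is a monomial ideal whose monomials in $K[x,y]$ are exactly those $x^ay^b$ with $a \geq k$, or $0 \le a \le k-1$ and $b \geq \lambda_a$ (and then, because $\text{gin}(I^n)$ lives in a larger ring, one multiplies by all monomials in the remaining variables — but for counting dimensions it is cleanest either to work in $K[x,y]$ directly, or to note that the Hilbert function in the big ring is a fixed convolution of the bivariate one, and is therefore equivalent data). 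So I would first reduce, without loss of generality, to computing $\dim_K J_t$ for the bivariate ideal, since the passage to $m$ variables only convolves with the Hilbert function of $K[x_3,\dots,x_m]$, which is invertible as a formal power series operation.

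Next I would set up the counting. For a fixed degree $t$, the monomials of $K[x,y]$ of degree $t$ are $x^ty^0, x^{t-1}y^1, \dots, x^0y^t$, indexed by $a = 0,\dots,t$. Such a monomial $x^ay^{t-a}$ lies in $J$ iff $a \geq k$, or $a \leq k-1$ and $t - a \geq \lambda_a$. Thus $\dim_K(K[x,y]/J)_t = \#\{a : 0 \le a \le \min(t,k-1),\ t-a < \lambda_a\}$, i.e. the number of $a$ in that range with $\lambda_a > t - a$. Since the $\lambda_a$ are strictly decreasing and $\lambda_a + a$ is strictly increasing, for each fixed $t$ the condition $\lambda_a > t-a$ cuts out an interval of values of $a$, and as $t$ grows this "staircase" peels off one column at a time. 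Concretely, I would prove by downward induction on $i$ (from $i = k-1$ down to $i=0$) that $\lambda_i$ is determined: having recovered $\lambda_{k-1}, \dots, \lambda_{i+1}$, one reads off $\lambda_i$ as the unique value making the count of the quotient correct in the relevant range of degrees — for instance, $\lambda_i$ is characterized as the smallest $t$ for which $x^iy^{t-i}$ first fails to contribute to $\dim_K(R/J)_t$ given the already-known larger invariants, or equivalently via a difference of consecutive Hilbert-function values.

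Alternatively, and perhaps more cleanly, I would observe that $H_{R/J}(t)$ is eventually a constant polynomial (the quotient has dimension $2$ in the big ring, dimension $0$ in the bivariate ring — in the bivariate case $\dim_K(K[x,y]/J)_t$ is eventually $0$ since $J$ contains $x^k$ and $y^{\lambda_0}$), and that the generating function $\sum_t \dim_K(K[x,y]/J)_t\, T^t$ equals $\sum_{i=0}^{k-1}(T^i + T^{i+1} + \cdots + T^{\lambda_i + i - 1}) = \sum_{i=0}^{k-1} \frac{T^i - T^{\lambda_i+i}}{1-T}$, a finite sum of "blocks" of lengths $\lambda_i$ starting at degree $i$. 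Because these blocks have strictly increasing starting points $i$ and strictly increasing end points $\lambda_i + i - 1$, the multiset of blocks — and therefore each $\lambda_i$ — is uniquely reconstructible from the coefficient sequence: the top degree with nonzero coefficient is $\lambda_0 - 1$, pulling off that block and inducting gives all the $\lambda_i$. Either route then yields Lemma~\ref{lem:uniquehilbert}, and Lemma~\ref{lem:determineinvariants} follows immediately by applying it to $J$ and to $\text{gin}(I^n)$ (which has this form by Theorem~\ref{thm:ginstructure}) once we know both have Hilbert function $H_{I^n}$ — the former by hypothesis, the latter by Theorem~\ref{thm:commonproperties}(1).

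The main obstacle I anticipate is purely bookkeeping: making the "peel off one block at a time" induction rigorous, i.e. writing down precisely which differences of Hilbert-function values isolate $\lambda_i$, and handling the boundary cases where consecutive $\lambda_i + i$ values leave no gap or where $t < i$. There is no conceptual difficulty — the statement is essentially that a partition-like staircase is determined by its row-sums in each degree — but one must be careful that the strict monotonicity $\lambda_0 > \cdots > \lambda_{k-1}$ is exactly what guarantees uniqueness (without it, different invariant sequences can share a Hilbert function). I would state the reconstruction as an explicit formula, something like $\lambda_i + i = \max\{t : H_{R/J}(t-1) - H_{R/J}(t) > \#\{j > i : \lambda_j + j = t\}\}$ processed in decreasing order of $i$, or simply argue via the generating-function identity above, which sidesteps the index juggling almost entirely.
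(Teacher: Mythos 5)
Your proposal is correct. Your first route is essentially the paper's own argument: the paper also recovers the invariants one at a time, from $\lambda_{k-1}$ down to $\lambda_0$, using exactly the monotonicity you identify ($\lambda_i+i$ weakly decreasing in $i$, equivalently $\deg(x^iy^{\lambda_i}) \leq \deg(x^{i-1}y^{\lambda_{i-1}})$); the only cosmetic difference is that the paper detects each new $\lambda_i$ as the first degree where $H_J$ exceeds the Hilbert function of the sub-ideal $L_T$ generated by the already-recovered generators, rather than via the staircase of the quotient. Your generating-function variant is a genuinely different packaging worth noting: by reducing to $K[x,y]$ (the convolution/invertibility argument is fine, since the Hilbert series of $K[x_3,\dots,x_m]$ is a unit in $\mathbb{Z}[[T]]$) and writing the Artinian quotient's Hilbert series as $\sum_{i=0}^{k-1}(T^i+\cdots+T^{\lambda_i+i-1})$, you recover the invariants in the opposite order, $\lambda_0$ first from the top nonzero degree, which sidesteps the sub-ideal bookkeeping entirely. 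One small inaccuracy: the block endpoints $\lambda_i+i-1$ are only \emph{weakly} monotone (e.g.\ $\lambda_{i-1}=\lambda_i+1$ gives equal endpoints), not strictly increasing as you claim; this does not damage the argument, since the peeling step only needs that block $i$ attains the maximum endpoint among blocks $i,\dots,k-1$, which weak monotonicity gives, together with the (easy, but worth stating) determination of $k$ as the least $t$ with $\dim(K[x,y]/J)_t\neq t+1$.
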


\begin{proof}
The key observation here is that 
$$\text{deg}(x^iy^{\lambda_i}) = i +\lambda_i = (i-1)+(\lambda_i+1) \leq (i-1)+\lambda_{i-1} = \text{deg}(x^{i-1}y^{\lambda_{i-1}})$$ 
since $\lambda_{i} < \lambda_{i-1}$. 
Suppose that $H_J(t)$ is the Hilbert function of an ideal $J$ as in the statement of the lemma.  

First note that $x^k$ is the smallest degree element of $J$ so that $k = \min\{t | H_J(t) \neq 0\}$.  

Consider the ideal $L_k = (x^k) \subset J$ and its Hilbert function $H_{L_k}(t)$.  Set 
$$S_k = \min\{t | H_J(t) \neq H_{L_k}(t)\}$$
so that the smallest degree monomial that is in $J$ but not in $L_k$ is of degree $S_k$.  Since $\text{deg}(x^iy^{\lambda_i}) \leq \text{deg}(x^{i-1}y^{\lambda_{i-1}})$ for all $i$, we must have $\text{deg}(x^{k-1}y^{\lambda_{k-1}}) = S_k$ and $\lambda_{k-1} = S_k-(k-1)$.

The same argument works in general by induction.  Suppose that we have determined the values of $k$ and $\lambda_{k-1}, \dots, \lambda_T$ and let $L_T = (x^k,x^{k-1}y^{\lambda_{k-1}}, \dots, x^Ty^{\lambda_T}) \subset J$.  Set 
 $$S_T = \min\{t | H_J(t) \neq H_{L_T}(t)\}.$$
By the same argument as above, $\text{deg}(x^{T-1}y^{\lambda_{T-1}}) = S_T$ and $\lambda_{T-1} = S_T-(T-1)$.
\end{proof}

\begin{proof}[Proof of Lemma~\ref{lem:determineinvariants}]
By Theorem \ref{thm:commonproperties},
$$H_{\text{gin}(I^n)}(t) = H_{I^n}(t) = H_J(t).$$
Since, $J$ and $\text{gin}(I^n)$ are both of the form considered in Lemma \ref{lem:uniquehilbert}, they are uniquely determined by their Hilbert functions and $J = \text{gin}(I^n)$.
\end{proof}

To prove that the numbers $\{\lambda_i\}$ produced by the algorithms presented in Section \ref{sec:proposedinvariants} are indeed the invariants of $\text{gin}(I^n)$, we will compute the Hilbert function of the ideal 
$$J= (x^k, x^{k-1}y^{\lambda_{k-1}}, \dots, xy^{\lambda_1}, y^{\lambda_0}).$$
By Lemma \ref{lem:determineinvariants} it is then sufficient to show that $H_J(t)$ is equal to $H_{I^n}(t)$.  We will now record expressions for the Hilbert functions of $I^n$ and $J$ that will be used to carry out this procedure.

%%%%%%%  The Hilbert Functions %%%%%%%%%%

\begin{prop}\label{prop:HilbofIn}
If $I$ is the ideal of a type $(\alpha, \beta)$ complete intersection in $K[x_1, \dots, x_m]$ then 
\begin{eqnarray*}
H_{I^n}(t)&=&  \sum_{j=1}^n \Bigg [ {t-\alpha(n-j)-\beta j +(m\!\!-\!\!1) \choose (m\!\!-\!\!1)} - {t-\alpha j -\beta (n+1 -j)+(m\!\!-\!\!1)\choose (m\!\!-\!\!1)} \Bigg]  \\
{} {} {} {} {}  & & + {t-n\alpha+(m\!\!-\!\!1) \choose (m\!\!-\!\!1)}.
\end{eqnarray*}
Setting $l:=\beta-\alpha$,  
\begin{eqnarray*}
H_{I^n}(t)&=& \sum_{j=1}^n \Bigg[ {t-\alpha n -jl +(m\!\!-\!\!1) \choose (m\!\!-\!\!1)} - {t-\alpha(n+1) - lj + (m\!\!-\!\!1) \choose (m\!\!-\!\!1)}\Bigg]\\
{} {} {} {} {}  & & + {t-n\alpha+(m\!\!-\!\!1) \choose (m\!\!-\!\!1)}.
\end{eqnarray*}
\end{prop}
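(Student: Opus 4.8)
The plan is to read off the Hilbert function of $I^n$ directly from the minimal free resolution in Proposition \ref{prop:Inres} and then translate the binomial coefficients. Since $I$ is a complete intersection of type $(\alpha,\beta)$, the resolution of $I^n$ has length $1$, so from
$$0 \rightarrow \mathcal{H}_1 \rightarrow \mathcal{H}_0 \rightarrow I^n \rightarrow 0$$
we get, additively on Hilbert functions, $H_{I^n}(t) = H_{\mathcal{H}_0}(t) - H_{\mathcal{H}_1}(t)$. The Hilbert function of a free module $R(-d)$ in degree $t$ is $\dim_K R_{t-d} = \binom{t-d+m-1}{m-1}$, with the convention already fixed in the paper that this binomial is $0$ when $t-d < 0$ (indeed when $t-d+m-1 < m-1$, i.e. $t < d$), which matches the stated convention $\binom{s}{t}=0$ for $s \leq 0$ or $t>s$ — so there is no need to case-split on whether the shifts exceed $t$.

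The concrete steps are as follows. First I would expand $H_{\mathcal{H}_1}(t)$: by Proposition \ref{prop:Inres}, $\mathcal{H}_1 = \bigoplus_{p=1}^{n} R(-\alpha p - \beta(n+1-p))$, so
$$H_{\mathcal{H}_1}(t) = \sum_{p=1}^{n} \binom{t - \alpha p - \beta(n+1-p) + (m-1)}{m-1}.$$
Second, expand $H_{\mathcal{H}_0}(t)$ using the decomposition $\mathcal{H}_0 = R(-\alpha n) \oplus \bigoplus_{p=0}^{n-1} R(-\alpha p - \beta(n-p))$, giving
$$H_{\mathcal{H}_0}(t) = \binom{t - n\alpha + (m-1)}{m-1} + \sum_{p=0}^{n-1} \binom{t - \alpha p - \beta(n-p) + (m-1)}{m-1}.$$
Third, reindex the sum over $\mathcal{H}_0$ by setting $j = p+1$ so that $p$ runs $0,\dots,n-1$ becomes $j$ running $1,\dots,n$, and $\alpha p + \beta(n-p) = \alpha(j-1) + \beta(n-j+1) = \alpha j + \beta(n+1-j) - (\beta - \alpha)$... more simply, $\alpha p + \beta(n-p)$ with $p=j-1$ equals $\alpha(n-j) + \beta j$ after also reindexing — concretely, $\alpha p + \beta(n-p)$ as $p$ ranges over $0,\dots,n-1$ takes exactly the values $\alpha(n-j) + \beta j$ as $j$ ranges over $1,\dots,n$ (substitute $j = n-p$). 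Subtracting $H_{\mathcal{H}_1}(t)$ then pairs, for each $j$ from $1$ to $n$, the positive term $\binom{t-\alpha(n-j)-\beta j + (m-1)}{m-1}$ against the negative term $\binom{t-\alpha j - \beta(n+1-j)+(m-1)}{m-1}$, which is exactly the bracketed summand in the first displayed formula; the leftover $\binom{t-n\alpha+(m-1)}{m-1}$ from $\mathcal{H}_0$ is the trailing term. Finally, for the second formula substitute $\beta = \alpha + l$: then $\alpha(n-j) + \beta j = \alpha n + lj$ and $\alpha j + \beta(n+1-j) = \alpha(n+1) + l(n+1-j)$ — but note the negative shifts must match the stated $\alpha(n+1)+lj$, so I would instead reindex that sum separately (replace $j$ by $n+1-j$ in the negative sum alone, which is legitimate since it is a sum over a symmetric index set $j=1,\dots,n$ mapped to itself), yielding $\alpha(n+1) + lj$ as claimed.

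The only genuine subtlety — not really an obstacle — is the bookkeeping with the convention $\binom{s}{t}=0$: one must check that the resolution-to-Hilbert-function passage is valid term by term even when shifts exceed $t$, which holds precisely because the paper's convention makes $\binom{t-d+m-1}{m-1}$ vanish exactly when $R(-d)_t = 0$. A second point worth a sentence is justifying the reindexing $j \mapsto n+1-j$ on the negative sum when deriving the $l$-form: since the index set $\{1,\dots,n\}$ is invariant under this involution, the value of the sum is unchanged, so both displayed formulas are equal and correct. Everything else is the routine binomial manipulation already flagged by the paper as standard, so I would present the computation compactly rather than grinding through each rearrangement.
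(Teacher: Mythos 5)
Your proposal is correct and follows essentially the same route as the paper: read $H_{I^n}(t)$ off the short resolution of Proposition \ref{prop:Inres} as $H_{\mathcal{H}_0}(t)-H_{\mathcal{H}_1}(t)$, expand each free summand as a binomial coefficient under the paper's vanishing convention, reindex the positive sum via $j=n-p$, and obtain the $l$-form by substituting $\beta=\alpha+l$ and applying the involution $j\mapsto n+1-j$ to the negative sum alone. The only difference is that you make explicit the (correct) observation that the convention $\binom{s}{t}=0$ for $s\le 0$ matches the vanishing of $R(-d)_t$ for $t<d$, which the paper leaves implicit.
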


\begin{proof}
By Proposition \ref{prop:Inres}, 
\begin{eqnarray*} \label{hilbIn}
H_{I^n}(t) &=&  \text{dim}_K [R(-\alpha n)]_t + \sum_{p=0}^{n-1} \text{dim}_K [R(-\alpha p -\beta (n-p))]_t - \sum_{p=1}^n \text{dim}_K [R(-\alpha p-\beta(n+1-p))]_t\\
&=&  {t-n\alpha+(m\!\!-\!\!1) \choose (m\!\!-\!\!1)} + \sum_{p=0}^{n-1} {t-\alpha p-\beta (n-p)+(m\!\!-\!\!1) \choose (m\!\!-\!\!1)} \\
{} {} {} {} {}   & &-\sum_{p=1}^n {t-\alpha p-\beta(n+1-p)+(m\!\!-\!\!1) \choose(m\!\!-\!\!1)}\\
&=&  \sum_{j=1}^n \Bigg [ {t-\alpha(n-j)-\beta j +(m\!\!-\!\!1) \choose (m\!\!-\!\!1)} - {t-\alpha j -\beta (n+1 -j)+(m\!\!-\!\!1)\choose (m\!\!-\!\!1)} \Bigg]  \\
{} {} {} {} {}  & & + {t-n\alpha+(m\!\!-\!\!1) \choose (m\!\!-\!\!1)}
\end{eqnarray*}

If $l=\beta-\alpha$, the \textit{sum} in the above expression is

\begin{eqnarray*}
& & \sum_{j=1}^n \Bigg[ {t-\alpha n - j(\beta-\alpha)+(m\!\!-\!\!1) \choose (m\!\!-\!\!1)} - {t-\beta(n+1) - j(\alpha-\beta)+(m\!\!-\!\!1) \choose (m\!\!-\!\!1)} \Bigg] \\
&=& \sum_{j=1}^n \Bigg[ {t-\alpha n - j l +(m\!\!-\!\!1) \choose (m\!\!-\!\!1)} - {t-(\alpha+l)(n+1) + j l + (m\!\!-\!\!1)\choose (m\!\!-\!\!1)}\Bigg] \\
&=& \sum_{j=1}^n \Bigg[ {t-\alpha n -jl +(m\!\!-\!\!1) \choose (m\!\!-\!\!1)} - {t-\alpha(n+1) - l(n+1-j) + (m\!\!-\!\!1)\choose (m\!\!-\!\!1)}\Bigg] \\
&=& \sum_{j=1}^n \Bigg[ {t-\alpha n -jl +(m\!\!-\!\!1) \choose (m\!\!-\!\!1)} - {t-\alpha(n+1) - lj + (m\!\!-\!\!1) \choose (m\!\!-\!\!1)}\Bigg].\\
\end{eqnarray*}
Note that the last equality follows by changing the indexing.
\end{proof}

\begin{prop}\label{prop:HilbofJ}
Suppose that we have an ideal $J$ of the form
$$J= (x^k, x^{k-1}y^{\lambda_{k-1}}, \dots, xy^{\lambda_1}, y^{\lambda_0})$$
where $\lambda_0 \geq \lambda_1 \geq \cdots \geq \lambda_{k-1}$.  Then
\begin{eqnarray*} 
H_J(t) &=& \sum_{i=0}^{k-1} {t-\lambda_i-i+(m\!\!-\!\!2) \choose (m\!\!-\!\!2)} + {t-k+(m\!\!-\!\!1) \choose (m\!\!-\!\!1)}.\\
\end{eqnarray*}
\end{prop}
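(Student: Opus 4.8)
The plan is to compute $H_J(t)$ by writing $J$ as a vector space decomposition over the monomial basis it contains, exactly as in the proof of Proposition~\ref{prop:HilbofIn} for $I^n$. The key structural fact is that, since $\lambda_0 \geq \lambda_1 \geq \cdots \geq \lambda_{k-1}$, a monomial $x^a y^b x_3^{c_3}\cdots x_m^{c_m}$ lies in $J$ if and only if either $a \geq k$, or $0 \leq a \leq k-1$ and $b \geq \lambda_a$. Thus the monomials of $J$ split into $k+1$ disjoint classes: for each $i = 0, 1, \dots, k-1$ the class of monomials with $x$-exponent exactly $i$ and $y$-exponent at least $\lambda_i$, plus the class with $x$-exponent at least $k$ (with no constraint on $y$). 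These classes are disjoint because the $x$-exponent is pinned down in the first $k$ of them and bounded below in the last, and the monotonicity of the $\lambda_i$ guarantees that fixing $x$-exponent $i < k$ forces the generator $x^i y^{\lambda_i}$ (not a higher one) to be the relevant one.

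Next I would count the monomials of degree $t$ in each class. A monomial with $x$-exponent exactly $i$, $y$-exponent at least $\lambda_i$, and total degree $t$ is determined by choosing the exponent of $y$ to be $\lambda_i + s$ for some $s \geq 0$ and distributing the remaining degree $t - i - \lambda_i - s$ among $x_3, \dots, x_m$; summing the resulting $\binom{t-i-\lambda_i-s+(m-3)}{m-3}$ over $s \geq 0$ collapses, via Equation~\ref{eq:Summation}, to $\binom{t-\lambda_i-i+(m-2)}{m-2}$. Similarly, the monomials with $x$-exponent at least $k$ and degree $t$ are in bijection with all monomials of degree $t-k$ in $R$, of which there are $\binom{t-k+(m-1)}{m-1}$. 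Adding these contributions over $i = 0, \dots, k-1$ and the final class gives precisely the claimed formula
$$H_J(t) = \sum_{i=0}^{k-1} \binom{t-\lambda_i-i+(m-2)}{m-2} + \binom{t-k+(m-1)}{m-1}.$$

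The main thing requiring care — though it is more bookkeeping than a genuine obstacle — is verifying that the classes are genuinely disjoint and genuinely exhaustive when some of the $\lambda_i$ coincide (the hypothesis here is $\geq$, not $>$, unlike in Proposition~\ref{prop:Jres}). If $\lambda_i = \lambda_{i-1}$ then $x^{i-1}y^{\lambda_{i-1}}$ divides $x^i y^{\lambda_i}$, so the latter is redundant as a generator, but the set-theoretic description of the monomials in $J$ is unaffected: a monomial with $x$-exponent exactly $i$ is in $J$ iff its $y$-exponent is at least $\lambda_i$, regardless of whether that bound also comes from an earlier generator. So the decomposition and the count go through verbatim. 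I would also note that the convention $\binom{s}{t} = 0$ for $s \leq 0$ keeps the formula valid for small $t$, since the binomial coefficients simply vanish when the relevant class is empty. The only genuine care needed is making sure the index shifts in the application of Equation~\ref{eq:Summation} are correct, which is routine.
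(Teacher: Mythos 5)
Your proof is correct, but it takes a genuinely different route from the paper's. The paper obtains $H_J(t)$ from the free resolution in Proposition~\ref{prop:Jres}: it writes $H_J(t)=\dim_K[\mathcal{G}_0]_t-\dim_K[\mathcal{G}_1]_t$, which produces two sums of binomial coefficients with lower index $m-1$, and then collapses the difference into the single sum of $\binom{t-\lambda_i-i+m-2}{m-2}$ terms using the Pascal recursion (Equation~\ref{eq:Recursive}). You instead count monomials directly, partitioning the monomials of $J$ according to their $x$-exponent and using the fact that weak monotonicity of the $\lambda_i$ makes "$x$-exponent $=i$ and $y$-exponent $\geq\lambda_i$" the exact membership criterion; stars-and-bars plus Equation~\ref{eq:Summation} then gives each class's contribution. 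Your decomposition is clearly disjoint and exhaustive, and the computation checks out. What each approach buys: yours is more elementary (no resolution needed) and handles the hypothesis $\lambda_0\geq\cdots\geq\lambda_{k-1}$ transparently --- whereas the paper's route technically invokes Proposition~\ref{prop:Jres}, which is stated only for strictly decreasing $\lambda_i$, so in the case of ties one must observe that the displayed complex is still a (possibly non-minimal) resolution, or discard redundant generators first; you correctly flag and dispose of exactly this point. The paper's route has the advantage that the resolution is needed anyway for the Cancellation Principle arguments in Section~3, so the Hilbert function falls out with no extra machinery. One minor caveat on your write-up: for the boundary term (total degree exactly $i+\lambda_i$) your count relies on $\binom{m-2}{m-2}=1$, which under the paper's convention requires $m\geq 3$; this is the same implicit assumption the paper makes and is not a defect of your argument specifically.
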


\begin{proof}
From Proposition \ref{prop:Jres},
\begin{eqnarray*} 
H_J(t) &=& \text{dim}_K[R(-k)] + \sum_{i=0}^{k-1} \text{dim}_K[R(-\lambda_i-i)] - \sum_{i=0}^{k-1} \text{dim}_K[R(-\lambda_i-i-1)]\\
&=& \sum_{i=0}^{k-1} {t-\lambda_i-i+(m\!\!-\!\!1) \choose (m\!\!-\!\!1)} - \sum_{i=0}^{k-1} {t-\lambda_i-i+(m\!\!-\!\!2) \choose (m\!\!-\!\!1)} + {t-k+(m\!\!-\!\!1) \choose (m\!\!-\!\!1)}\\
&=& \sum_{i=0}^{k-1} {t-\lambda_i-i+(m\!\!-\!\!2) \choose (m\!\!-\!\!2)} + {t-k+(m\!\!-\!\!1) \choose (m\!\!-\!\!1)}.\\
\end{eqnarray*}
The last equality follows from the recursive formula for binomial coefficients (see Equation \ref{eq:Recursive}). 
\end{proof}

\section{Main theorem and the proposed invariants}
\label{sec:proposedinvariants}

In the previous section we determined the general structure of $\text{gin}(I^n)$ where $I$ is a 2-complete intersection of type $(\alpha, \beta)$ and showed that it was defined by a strictly decreasing sequence of invariants $\{ \lambda_i \}$.  We also found expressions for $\lambda_0$ and $\lambda_{k-1}$ in terms of $n$, $\alpha$, and $\beta$ (see Theorem \ref{thm:ginstructure}).  In this section we propose algorithms to determine the remaining invariants, and thus the minimal generators, of $\text{gin}(I^n)$.  Throughout $l:=\beta-\alpha$.

\begin{thm}
\label{thm:mainthm}
Fix positive integers $\alpha$, $\beta$, and $n$ such that $\beta \geq \alpha$ and $n \geq 2$.  Compute the sequence of invariants $\{\lambda_i\}$ using:
\begin{itemize}
\item Algorithm \ref{alg:far} if $\beta \geq 2\alpha-1$;
\item Algorithm \ref{alg:mid} if $2\alpha -1 > \beta \geq \frac{3}{2}\alpha$;
\item Algorithm \ref{alg:closedivides} if $\frac{3}{2}\alpha > \beta >\alpha$, $(\beta-\alpha) | \alpha$, and $n \geq \frac{\alpha}{\beta-\alpha} +1$;
\item Algorithm \ref{alg:closedoesnotdivide} if $\frac{3}{2}\alpha > \beta >\alpha$, $(\beta-\alpha) \nmid \alpha$, and $n \geq \lceil \frac{\alpha}{\beta-\alpha} \rceil+1$;
\item Algorithm \ref{alg:closesmalln} if $\frac{3}{2}\alpha > \beta >\alpha$ and $2 \leq n < \lceil \frac{\alpha}{\beta-\alpha} \rceil +1$; and
\item Algorithm \ref{alg:equal} if $\alpha = \beta$.
\end{itemize} 
If $I$ is a type $(\alpha,\beta)$ complete intersection in $R$ then the reverse lexicographic generic initial ideal of $I^n$ is
$$\text{gin}(I^n) = (x^k, x^{k-1}y^{\lambda_{k-1}}, \dots, xy^{\lambda_1}, y^{\lambda_0})$$
where $k = n\alpha$.
\end{thm}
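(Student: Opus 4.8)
The plan is to reduce the theorem to a Hilbert-function computation via Lemma \ref{lem:determineinvariants}. That lemma tells us it suffices to show that the ideal $J = (x^k, x^{k-1}y^{\lambda_{k-1}}, \dots, xy^{\lambda_1}, y^{\lambda_0})$ built from the sequence $\{\lambda_i\}$ output by the relevant algorithm satisfies $H_J(t) = H_{I^n}(t)$ for all $t$, where $k = n\alpha$. We already have closed-form expressions for both sides: Proposition \ref{prop:HilbofIn} gives $H_{I^n}(t)$ as an alternating sum of binomial coefficients indexed by the degree shifts $\alpha n + jl$ and $\alpha(n+1) + lj$, and Proposition \ref{prop:HilbofJ} gives $H_J(t) = \sum_{i=0}^{k-1}\binom{t-\lambda_i-i+(m-2)}{m-2} + \binom{t-k+(m-1)}{m-1}$. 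So the entire content of the main theorem is the combinatorial identity between these two expressions, for each of the six algorithmic regimes.

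The key structural observation I would exploit is that everything is governed by the \emph{gap sequence} $g_i = \lambda_{i-1} - \lambda_i$, which the introduction tells us consists only of the values $1$, $2$, and $l - \alpha + 2 = \beta - 2\alpha + 2$. The degree $\lambda_i + i$ of the $i$-th generator is nondecreasing in the sense controlled by $g_i$ versus $1$: the sequence of degrees $d_i := \lambda_i + i$ is weakly increasing (since each $\lambda_i \gneq \lambda_{i+1}$ forces $d_{i+1} \leq d_i + $ something, actually $d_{i} - d_{i+1} = g_{i+1} - 1 \geq 0$), and a degree value $d$ is hit with multiplicity equal to the number of $i$ with $\lambda_i + i = d$. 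Summing $\binom{t-d_i+(m-2)}{m-2}$ over $i$ therefore collapses, via the summation identity \eqref{eq:SummationCor1}–\eqref{eq:SummationCor2}, into a telescoping sum of $\binom{\,\cdot\,+(m-1)}{m-1}$ terms at the degrees where the gap sequence jumps. Matching this telescoped form against the six-term (per $j$) alternating sum in Proposition \ref{prop:HilbofIn} is then a bookkeeping exercise: one must check that the ``jump degrees'' coming from each algorithm coincide with the shifts $\alpha n + jl$ and $\alpha(n+1) + lj$ appearing in $H_{I^n}$, with the correct signs. Since the anchoring values $\lambda_0 = n\beta + \alpha - 1$ and $\lambda_{n\alpha - 1} = \beta - \alpha + 1$ are already pinned down by Theorem \ref{thm:ginstructure}(iii)–(iv), the two endpoints of the telescoping are automatically correct, which is a useful sanity check.

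Concretely I would proceed as follows. First, for each of the six algorithms, write out explicitly the gap sequence $\{g_i\}$ it produces and verify by a direct (finite) summation that $\sum_{i=1}^{k-1} g_i = \lambda_0 - \lambda_{k-1} = (n\beta + \alpha - 1) - (\beta - \alpha + 1) = (n-1)\beta + 2\alpha - 2$; this confirms the algorithm is at least internally consistent with the known endpoints. Second, partition the index set $\{0, 1, \dots, k-1\}$ into the maximal blocks on which the gap is constant, so that within each block the degrees $\lambda_i + i$ either stay constant (gap $1$) or increase by a fixed step (gap $2$ or gap $\beta - 2\alpha + 2$), and apply \eqref{eq:SummationCor1}/\eqref{eq:SummationCor2} block-by-block to rewrite $\sum_i \binom{t - \lambda_i - i + (m-2)}{m-2}$ as a short alternating sum of $\binom{\,\cdot\, + (m-1)}{m-1}$'s. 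Third, add back the $\binom{t-k+(m-1)}{m-1}$ term and compare the resulting expression termwise with the formula of Proposition \ref{prop:HilbofIn}, after the substitution $l = \beta - \alpha$. The main obstacle will be the third bullet of the case analysis and its neighbours — the regime $\frac{3}{2}\alpha > \beta > \alpha$, where $\beta - 2\alpha + 2$ can be as small as $\max\{0, \text{small}\}$ or even force the exotic divisibility hypotheses $(\beta - \alpha) \mid \alpha$ versus $(\beta - \alpha) \nmid \alpha$ and the lower bound on $n$; there the gap sequence has a genuinely periodic structure whose period is $\alpha/(\beta-\alpha)$ (or its ceiling), and handling the ``small $n$'' case (Algorithm \ref{alg:closesmalln}), where fewer than a full period's worth of structure appears, will require its own separate argument rather than falling out of the generic telescoping. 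I expect the cases $\beta \geq 2\alpha - 1$ and $\alpha = \beta$ to be comparatively short, and the bulk of the work to be the careful matching of the periodic gap pattern against the $j$-indexed sum of Proposition \ref{prop:HilbofIn} in the ``close'' regimes.
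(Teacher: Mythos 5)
Your proposal is correct and follows essentially the same route as the paper: reduce to the Hilbert-function equality $H_J(t)=H_{I^n}(t)$ via Lemma \ref{lem:determineinvariants}, extract closed forms for the $\lambda_i$ from the gap sequences of each algorithm, and telescope the partial sums $\sum_i\binom{t-\lambda_i-i+m-2}{m-2}$ block-by-block with the summation identities to match Proposition \ref{prop:HilbofIn}, exactly as carried out in Section \ref{sec:proofofmain} and Appendices \ref{ap:formulas}--\ref{ap:calculations}. (Only a minor slip: the degrees $d_i=\lambda_i+i$ are weakly \emph{decreasing} in $i$, as your own computation $d_i-d_{i+1}=g_{i+1}-1\ge 0$ shows.)
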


Each of the algorithms, and thus the invariants $\lambda_i$ that they produce and the resulting gaps $g_i:=\lambda_{i-1}-\lambda_i$,  can be divided into three consecutive phases which we refer to as the Build, the Pattern, and the Reverse Build.  As the names of the phases suggest, the gap sequences $\{g_i\}$ arising from the Reverse Build and the Build are almost mirror images of each other while the gap sequences arising from the Pattern consists of a number of repeats of the same sub-sequence called a Pattern Block.   

%For a fixed $\alpha$ and $\beta$, the Pattern is the only phase which changes as $n$ gets large - the Build and the Reverse Build remain the same.

\subsection{Algorithms Producing the Proposed Invariants}
\label{sec:algorithms}

Given three positive integers $n$, $\alpha$, and $\beta$ where $n \geq 2$ and $\beta \geq \alpha$, the following algorithms produce a sequence of positive integers $\lambda_0, \dots, \lambda_{k-1}$ which Theorem \ref{thm:mainthm} claims are the invariants of $\text{gin}(I^n)$  for a type $(\alpha, \beta)$ complete intersection $I$.

For examples and illustrations of the outputs of these algorithms, see Appendix \ref{ap:examples}.  

%%%This is the algorithm for far away alpha, beta%%%
\begin{algorithm}[H]
\caption{Determine $\{ \lambda_i \}$ for $\beta \geq 2\alpha-1$, $n \geq 1$}
\label{alg:far}
\begin{algorithmic}
\STATE $i=1$
\STATE $\lambda_0 = n\beta+\alpha-1$
\STATE $h=1$
\WHILE{$h \leq n-1$}
\STATE \textbf{BlockFar($i, \lambda_{i-1}, \alpha, \beta$)}
\STATE $h=h+1$
\ENDWHILE
\STATE \textbf{PartialBlockFar($i, \lambda_{i-1}, \alpha$)}
\end{algorithmic}
\end{algorithm}

\textbf{Sub-routines for Algorithm \ref{alg:far}}
\begin{multicols}{2}
%%PatternBlockFar%%
\begin{algorithmic}
\STATE \textbf{BlockFar($i, \lambda_{i-1}, \alpha, \beta$)}
\STATE $t=1$
\WHILE{$t \leq \alpha-1$}
\STATE $\lambda_i=\lambda_{i-1}-2$
\STATE $t=t+1$
\STATE$i=i+1$
\ENDWHILE
\STATE $\lambda_i = \lambda_{i-1}-(\beta-2\alpha+2)$
\STATE $i=i+1$
\STATE \textbf{RETURN}
\end{algorithmic}

\columnbreak

%%%PartialPatternBlockFar%%
\begin{algorithmic}
\STATE \textbf{PartialBlockFar($i, \lambda_{i-1}, \alpha$)}
\STATE $h=1$
\WHILE{$h\leq \alpha-1$}
\STATE $\lambda_i=\lambda_{i-1}-2$
\STATE $i=i+1$
\STATE $h=h+1$
\ENDWHILE
\STATE \textbf{RETURN}
\end{algorithmic}

\end{multicols}

The subroutines not called by Algorithm \ref{alg:far} appear after the statements of the remaining algorithms.

%%Algorithm for medium alpha, beta

\begin{algorithm}%[H]
\caption{Determine $\{ \lambda_i \}$ for $2\alpha-1>\beta \geq \frac{3}{2}\alpha$, $n \geq 2$}
\label{alg:mid}
\begin{algorithmic}
\STATE $l=\beta-\alpha$
\STATE $r=2\alpha-\beta$
\STATE $\lambda_0 = n\beta+\alpha-1$
\STATE $i=1$
%%Build
\STATE \textbf{Build($0, i, \lambda_{i-1}$)}
%%pattern
\IF{$n \geq 3$}
\STATE $h=1$
\WHILE{$h \leq n-2$}
\STATE \textbf{BlockMid($i, \lambda_{i-1}, r, \alpha$)}
\STATE $h=h+1$
\ENDWHILE
\ENDIF
%%partial pattern
\IF{$n \geq 2$}
\STATE \textbf{PartialBlockMid($i, \lambda_{i-1}, r$)}
\ENDIF
%%reverse build
\STATE \textbf{ReverseBuild($0, i, \lambda_{i-1}, l$)}
\end{algorithmic}
\end{algorithm}

%%this is the algorithm for close alpha, beta when l divides alpha

\begin{algorithm}%[H]
\caption{Determine $\{ \lambda_i \}$ for $\frac{3}{2} \alpha > \beta >\alpha$, $(\beta-\alpha)|\alpha$, $n \geq \frac{\alpha}{\beta-\alpha} +1$}
\label{alg:closedivides}
\begin{algorithmic}
\STATE $l=\beta-\alpha$
\STATE $c = \lceil \frac{\alpha}{l} \rceil = \frac{\alpha}{l}$
\STATE $d = \alpha  \text{mod} l = 0$
\STATE $\lambda_0 = n\beta+\alpha-1$
\STATE $i=1$
%Build
\STATE \textbf{Build($c-2, i, \lambda_{i-1}$)}
%%pattern
\STATE $h=1$
\WHILE{$h \leq nl-\alpha+l$}
\STATE \textbf{BlockClose($i, \lambda_{i-1}, c, d, l, \alpha$)}
\STATE $h=h+1$
\ENDWHILE
%%ReverseBuild
\STATE \textbf{ReverseBuildPartial($c-2, i, \lambda_{i-1}, l$)}
\STATE \textbf{ReverseBuild($c-3, i, \lambda_{i-1}, l$)}
\end{algorithmic}
\end{algorithm}

%%this is the algorithm for close alpha, beta when l DOES NOT divide alpha

\begin{algorithm}%[H]
\caption{Determine $\{ \lambda_i \}$ for $\frac{3}{2} \alpha > \beta >\alpha$, $(\beta-\alpha) \nmid \alpha$, $n \geq \lceil \frac{\alpha}{\beta-\alpha} \rceil+1$}
\label{alg:closedoesnotdivide}
\begin{algorithmic}
\STATE $l=\beta-\alpha$
\STATE $c = \lceil \frac{\alpha}{l} \rceil$
\STATE $d = \alpha  \text{mod} l$
\STATE $\lambda_0 = n\beta+\alpha-1$
\STATE $i=1$
%Build
\STATE \textbf{Build($c-2, i, \lambda_{i-1}$)}
%%pattern
\STATE $h=1$
\WHILE{$h \leq n-c$}
\STATE \textbf{BlockClose($i, \lambda_{i-1}, c, d, l, \alpha$)}
\STATE $h=h+1$
\ENDWHILE
%%Partial Pattern
\STATE \textbf{PartialBlockClose($i, \lambda_{i-1}, c, d$)}
%%ReverseBuild
\STATE \textbf{ReverseBuildPartial($c-2, i, \lambda_{i-1}, l$)}
\STATE \textbf{ReverseBuild($c-3, i, \lambda_{i-1}, l$)}
\end{algorithmic}
\end{algorithm}

%%Algorithm for close alpha, beta when n is less than 1+alpha/l

\begin{algorithm}[H]
\caption{Determine $\{ \lambda_i \}$ for $\frac{3}{2} \alpha > \beta >\alpha $, $2 \leq n < \lceil \frac{\alpha}{l}\rceil +1$} 
\label{alg:closesmalln}
\begin{algorithmic}
\STATE $l = \beta-\alpha$
\STATE $\lambda_0 = n\beta+\alpha-1$
\STATE $i=1$
%Build
\STATE \textbf{Build($n-2, i, \lambda_{i-1}$)}
%Pattern
\STATE $h=1$
\WHILE{$h \leq \beta-nl$}
\STATE \textbf{onestwo}$(n-1,i,\lambda_{i-1})$
\STATE $h=h+1$
\ENDWHILE
%%ReverseBuild
\STATE \textbf{ReverseBuildPartial($n-2, i, \lambda_{i-1}, l$)}
\IF{$n \geq 3$}
\STATE \textbf{ReverseBuild($n-3, i, \lambda_{i-1}, l$)}
\ENDIF
\end{algorithmic}
\end{algorithm}

%%This is the algorithm for alpha = beta

\begin{algorithm}%[H]
\caption{Determine $\{ \lambda_i \}$ for $\alpha=\beta$, $n \geq 1$}
\label{alg:equal}
\begin{algorithmic}
\STATE $i=1$
\STATE $\lambda_0 = (n+1)\alpha-1$
\STATE $h=1$
\WHILE{$h \leq \alpha-1$}
\STATE \textbf{onestwo}$(n-1,i,\lambda_{i-1})$
\STATE $h=h+1$
\ENDWHILE
\STATE \textbf{PartialBlockEqual($i, \lambda_{i-1}, n$)}
\end{algorithmic}
\end{algorithm}

\textbf{Sub-routines for Remaining Algorithms}

%%%%onestwo program%%

\begin{multicols}{2}

\begin{algorithmic}
\STATE \textbf{onestwo($x,i, \lambda_{i-1}$)}
\STATE $t=1$
\WHILE{$t \leq x$}
\STATE $\lambda_i = \lambda_{i-1}-1$
\STATE $t=t+1$
\STATE $i=i+1$
\ENDWHILE
\STATE $\lambda_i = \lambda_{i-1}-2$
\STATE $i=i+1$
\STATE \textbf{RETURN}
\end{algorithmic}

\columnbreak 

%%%%%twoonesprogram%%

\begin{algorithmic}
\STATE \textbf{revonestwo($x,i, \lambda_{i-1}$)}
\STATE $\lambda_i=\lambda_{i-1}-2$
\STATE $i=i+1$ 
\STATE$t=1$
\WHILE{$t \leq x$}
\STATE $\lambda_i = \lambda_{i-1}-1$
\STATE $t=t+1$
\STATE $i=i+1$
\ENDWHILE
\STATE \textbf{RETURN}
\end{algorithmic}

\end{multicols}

\vspace{0.1in}

\begin{multicols}{2}

%%%Build Program%%%

\begin{algorithmic}
\STATE \textbf{Build($limq, i, \lambda_{i-1}$)}
\STATE $q=0$
\WHILE{$q \leq limq$}
\STATE $j=1$
\WHILE{$j \leq l$}
\STATE \textbf{onestwo}$(q,i, \lambda_{i-1})$
\STATE $j=j+1$
\ENDWHILE
\STATE $q=q+1$
\ENDWHILE
\STATE \textbf{RETURN}
\end{algorithmic}

\columnbreak

%%ReverseBuild%%%
\begin{algorithmic}
\STATE \textbf{ReverseBuild($limq, i, \lambda_{i-1}, l$)}
\STATE $q = limq$
\WHILE{$q \geq 0$}
\STATE $j=1$
\WHILE{$j \leq l$}
\STATE \textbf{revonestwo}$(q,i,\lambda_{i-1})$
\STATE $j=j+1$
\ENDWHILE
\STATE $q=q-1$
\ENDWHILE
\STATE \textbf{RETURN}
\end{algorithmic}

\end{multicols}

%%%
%\newpage
%%%%

%%%%ReverseBuildPartial%%%%
\begin{algorithmic}
\STATE \textbf{ReverseBuildPartial($limq, i, \lambda_{i-1}, l$)}
\STATE $j=1$
\WHILE{$j \leq limq$}
\STATE $\lambda_i = \lambda_{i-1}-1$
\STATE $j=j+1$
\ENDWHILE
\STATE $j=2$
\WHILE{$j \leq l$}
\STATE \textbf{revonestwo}$(limq,i, \lambda_{i-1})$
\STATE $j=j+1$
\ENDWHILE
\STATE \textbf{RETURN}
\end{algorithmic}

\begin{multicols}{2}

%%%%PatternBlockMid%%%
\begin{algorithmic}
\STATE \textbf{BlockMid($i, \lambda_{i-1}, r, \alpha$)}
\STATE $t=1$
\WHILE{$t \leq 2r-1$}
\IF{$t$ is odd}
\STATE $\lambda_{i}=\lambda_{i-1}-1$
\ELSE[$t$ is even]
\STATE $\lambda_{i}=\lambda_{i-1}-2$
\ENDIF
\STATE $t=t+1$
\STATE $i=i+1$
\ENDWHILE
\STATE $t=1$ 
\WHILE{$t \leq \alpha - (2r-1) = \beta-3\alpha+1$}
\STATE$\lambda_{i}=\lambda_{i-1}-2$
\STATE $t=t+1$
\STATE $i=i+1$
\ENDWHILE
\STATE \textbf{RETURN}
\end{algorithmic}

\columnbreak
%%PartialPatternBlockMid
\begin{algorithmic}
\STATE \textbf{PartialBlockMid($i, \lambda_{i-1}, r$)}
\STATE $t=1$
\WHILE{$t \leq 2r-1$}
\IF{$t$ is odd}
\STATE $\lambda_{i}=\lambda_{i-1}-1$
\ELSE[$t$ is even]
\STATE $\lambda_{i}=\lambda_{i-1}-2$
\ENDIF
\STATE $t=t+1$
\STATE $i=i+1$
\ENDWHILE
\STATE \textbf{RETURN}
\end{algorithmic}

\end{multicols}

\begin{multicols}{2}

%%%PatternBlockClose%%%%

\vspace{0.2in}
\begin{algorithmic}
\STATE \textbf{BlockClose($i, \lambda_{i-1}, c, d, l, \alpha$)}
\IF{$l | \alpha$}
\STATE \textbf{onestwo}$(c-1,i, \lambda_{i-1})$ 
\ELSE
\STATE $j=1$
\WHILE{$j \leq d$}
\STATE \textbf{onestwo}$(c-1,i,\lambda_{i-1})$
\STATE $j=j+1$
\ENDWHILE
\WHILE{$j \leq l$}
\STATE \textbf{onestwo}$(c-2,i,\lambda_{i-1})$
\STATE $j=j+1$
\ENDWHILE
\ENDIF
\STATE \textbf{RETURN}
\end{algorithmic}

\columnbreak

%%%%PartialPatternBlockClose%%%
\begin{algorithmic}
\STATE \textbf{PartialBlockClose($i, \lambda_{i-1}, c, d$)}
\STATE $j=1$
\WHILE{$j \leq d$}
\STATE \textbf{onestwo}$(c-1,i,\lambda_{i-1})$
\STATE $j=j+1$
\ENDWHILE
\STATE \textbf{RETURN}
\end{algorithmic}

\end{multicols}

%%%PartialBlockEqual

\begin{algorithmic}
\STATE \textbf{PartialBlockEqual($i, \lambda_{i-1}, n$)}
\STATE $h=1$
\WHILE{$h\leq n-1$}
\STATE $\lambda_i=\lambda_{i-1}-1$
\STATE $i=i+1$
\STATE $h=h+1$
\ENDWHILE
\STATE \textbf{RETURN}
\end{algorithmic}

%%%%%Description of the Algorithms

\subsection{Description of the Algorithms}
\label{sec:notesonalgs}

We will call the $\lambda_i$s produced by these algorithms the \textit{proposed invariants} of $\text{gin}(I^n)$.  Each of the algorithms can be divided into the following three stages:
\begin{enumerate}
\item  the Build (absent in the cases where $\beta \geq 2\alpha-1$ and $\alpha = \beta$);
\item the Pattern (consists of full or partial repetitions of a Pattern Block\footnote{As their names suggest, \textbf{BlockFar}, \textbf{BlockMid}, and \textbf{BlockClose} are Pattern Blocks.  In the cases where $\alpha=\beta$ and $\frac{3}{2} > \beta> \alpha$, $2 \leq n < \lceil \frac{\alpha}{l} \rceil +1$, the Pattern Block is simply the \textbf{onestwo} subroutine.}); and
\item the Reverse Build (also absent in the cases where $\beta \geq 2\alpha-1$ and $\alpha = \beta$).  
\end{enumerate}
It will be convenient to divide the proposed invariants produced by an algorithm into the same three categories; for example, a $\lambda_i$ produced by the Build stage of an algorithm will be said to be part of the Build.

Each of the algorithms begins with defining $\lambda_0 = n\beta+\alpha-1$ (see Theorem \ref{thm:ginstructure}).  The other invariants are obtained one-by-one by subtracting 1, 2, or $\beta-2\alpha+2$ from the previous invariant in the sequence.  Patterns in the sequences $\{\lambda_0, \dots, \lambda_{k-1}\}$ emerge by looking at the sequences of gaps between the $\lambda_i$s;  thus, we set $g_i$ to be equal to the number subtracted from $\lambda_{i-1}$ to obtain $\lambda_i$, or $g_i := \lambda_{i-1}-\lambda_{i}$.   The sequence $\{g_i\}$ will be called the \textit{gap sequence} corresponding to the sequence of proposed invariants; note that this sequence will consist entirely of the numbers 1, 2, and $\beta-2\alpha+2$. 

Observe the following:
\begin{itemize}
\item Since all of the numbers $g_i$ are greater than 0, the sequences $\{\lambda_i\}$ produced by the algorithms are strictly decreasing.
\item The gap sequence of the Build written backwards generally gives the gap sequence of the Reverse Build.  The exception to this is in the algorithms corresponding to the cases where $\frac{3}{2} \alpha > \beta >\alpha$.  In these cases, everything but the final gap of the Build is reflected in the Reverse Build; this is why it is necessary to include the \textbf{ReverseBuildPartial} subroutine.
\item  The gap sequences of the Build, the Reverse Build, and the Pattern Blocks, are independent of $n$ except in Algorithm \ref{alg:closesmalln} corresponding to the case where $\frac{3}{2}\alpha > \beta >\alpha$, $2 \leq n < \lceil \frac{\alpha}{l} \rceil+1$ and in Algorithm \ref{alg:equal} corresponding to the case where $\alpha=\beta$.  The only part of the other algorithms that changes as $n$ increases is the number of times that the Pattern Block is repeated.
\item The last $\lambda_i$ produced by the algorithms is $\lambda_{k-1} = \beta-\alpha+1$.  Note that, by Theorem \ref{thm:ginstructure}, this condition must be satisfied for the algorithms to produce the invariants of $\text{gin}(I^n)$.  We can check that this condition holds by showing that 
\begin{eqnarray*}
\sum_{i=1}^{k-1} g_i &=& \lambda_0 - \lambda_{k-1} \\
&=& (n\beta+\alpha-1)-(\beta-\alpha+1) \\
&=& (n-1)\beta+2\alpha-2.
\end{eqnarray*}
%\item The restrictions on $n$ are necessary in order for the algorithms to proceed.  For example, the condition that $n$ be greater than or equal to $3$ in Algorithm \ref{alg:mid} is necessary for $h=1$ to be less than or equal to $n-2$.
\item The conditions on $\alpha$ and $\beta$ ensure that the algorithms make sense.  For example, in Algorithm \ref{alg:closedivides} \textbf{ReverseBuild}($c-3, \dots$) is well-defined because when $\frac{3}{2} \alpha > \beta > \alpha$, 
$$c = \frac{\alpha}{l} > \frac{\alpha}{3\alpha/2-\alpha} = 2.$$
%\item The three algorithms including the condition $\frac{3}{2}\alpha > \beta >\alpha$ cover all possible $n$. 
\end{itemize}

We encourage the reader to consult Appendix \ref{ap:examples} to get a better feel for the sequences $\{g_i\}$ and $\{\lambda_i\}$ produced by these algorithms.  It contains examples of the outputs for fixed $\alpha$, $\beta$, and $n$ and illustrations that indicate what happens for a general $n$ in most cases.

\section{Proof of Theorem \ref{thm:mainthm}}
\label{sec:proofofmain}

In this section we will prove Theorem \ref{thm:mainthm}.  The proof is divided into six parts, one for each of the cases and algorithms referred to in the theorem.  The proof in each case will involve the following steps:

\begin{enumerate}
\item write non-recursive formulas for the proposed invariants $\lambda_i$ produced by the algorithm;
\item compute $H_J(t)$ where 
$$J=(x^k, x^{k-1}y^{\lambda_{k-1}}, \dots, xy^{\lambda_1}, y^{\lambda_0})$$
and the invariants $\lambda_i$ are as above; and
\item rewrite $H_{I^n}(t)$ in an appropriate form, sometimes using the assumptions on $\alpha$ and $\beta$ for the particular case, and simplify the expression to show that it is equal to $H_{J}(t)$.  By Lemma \ref{lem:determineinvariants} this will prove that $J = \text{gin}(I^n)$ so that the invariants produced by the algorithm are the invariants of $\text{gin}(I^n)$.
\end{enumerate}

Since the required calculations are routine and long, details are left to the Appendices~\ref{ap:formulas} and \ref{ap:calculations}.  In particular, Appendix~\ref{ap:formulas} contains the derivations of the non-recursive formulas for the $\lambda_i$ from the algorithms while Appendix~\ref{ap:calculations} contains details of the $H_{I^n}(t)$ calculation and simplifications of the partial sums $\sum_i {t-\lambda_i-i+m-2 \choose m-2}$ that appear in the expression for $H_J(t)$ from Section~\ref{sec:hilbmethod}.

For convenience, we will divide the formulas and long calculations into parts according to whether they involve invariants and indexing from the Build, Pattern, or Reverse Build of the algorithm as described in Section~\ref{sec:notesonalgs}.

As before, $l=\beta-\alpha$ and $g_i = \lambda_{i-1}-\lambda_i$.

%%%%The case beta > 2alpha
\subsection{The Case $\beta \geq 2\alpha-1$,\,\,\,$n \geq 1$}

\subsubsection{Formulas for the proposed invariants}
\label{sec:largeformulas}
First we write a closed form expression for the $\lambda_v$ produced by Algorithm \ref{alg:far}.  For details on how this formula follows from the algorithm, see Section \ref{apsec:farformulas} of Appendix \ref{ap:formulas}.

If $v = j\alpha+s$ where $s = 0, \dots, \alpha-1$ and $j=0, \dots, n-1$,
$$ \lambda_v =  (n-j)\beta+\alpha-1-2s.$$

\subsubsection{The Hilbert function of $J$}
Suppose that $J=(x^k, x^{k-1}y^{\lambda_{k-1}}, \dots, xy^{\lambda_1}, y^{\lambda_0})$ where the $\lambda_v$ are given by the formula in Section \ref{sec:largeformulas}.  Then, by Proposition \ref{prop:HilbofJ},  the Hilbert function for $H_J(t)$ is 

\begin{eqnarray*}
H_{J} (t) &=& \sum_{i=0}^{k-1} {t-\lambda_i-i+(m\!\!-\!\!2) \choose (m\!\!-\!\!2)} + {t-k+(m\!\!-\!\!1) \choose (m\!\!-\!\!1)}\\
&=& \sum_{j=0}^{n-1} \sum_{s=0}^{\alpha-1} {t-[(n-j)\beta+\alpha-1-2s]-[j\alpha+s]+(m\!\!-\!\!2) \choose (m\!\!-\!\!2)} + {t-n\alpha+(m\!\!-\!\!1) \choose (m\!\!-\!\!1)}\\
&=& \sum_{j=0}^{n-1} \sum_{s=0}^{\alpha-1} {t-(n-j)\beta-\alpha-j\alpha+s+m-1 \choose m-2} + {t-n\alpha+m-1 \choose m-1} \\
&=& \sum_{j=0}^{n-1} \Bigg[ {t-(n-j)\beta-\alpha-j\alpha+ \alpha + m-1 \choose m-1} - {t-(n-j)\beta-\alpha-j\alpha+m-1 \choose m-1} \Bigg] \\
&&+ {t-n\alpha+m-1 \choose m-1}\\
&=&\sum_{p=1}^n {t-p\beta-(n-p)\alpha+m-1 \choose m-1} - \sum_{q=1}^n {t-(n-q+1)\beta-q\alpha+m-1 \choose m-1} \\
&&+ {t-n\alpha+m-1 \choose m-1}.\\
\end{eqnarray*}

\subsubsection{The Hilbert function of $I^n$}
By Proposition \ref{prop:HilbofIn},
\begin{eqnarray*}
H_{I^n}(t) &=&  \sum_{j=1}^n \Bigg ( {t-\alpha(n-j)-\beta j +(m\!\!-\!\!1) \choose (m\!\!-\!\!1)} - {t-\alpha j -\beta (n+1 -j)+(m\!\!-\!\!1)\choose (m\!\!-\!\!1)} \Bigg) \\
{} {} {} {} {}  & & +{t-n\alpha+(m\!\!-\!\!1) \choose (m\!\!-\!\!1)}
\end{eqnarray*}
Since $H_J(t) = H_{I^n}(t)$, Lemma \ref{lem:determineinvariants} implies that $J=\text{gin}(I^n)$ and thus that the numbers produced  by Algorithm \ref{alg:far} are the invariants of $\text{gin}(I^n)$ when $\beta \geq 2\alpha-1$ and $n \geq 1$.

%%%%%%%The Case $2\alpha -1> \beta \geq \frac{3}{2} \alpha$
\subsection{The Case $2\alpha -1> \beta \geq \frac{3}{2} \alpha, n \geq 2$}

Throughout this section, we will set $r:=2\alpha-\beta>0$.

\subsubsection{Formulas for the proposed invariants}
\label{sec:midformulas}

First we will write closed-form expressions for the numbers $\lambda_i$ produced by Algorithm \ref{alg:mid}.  Details about how these formulas are be obtained from the algorithm can be found in Section \ref{apsec:midformulas} of Appendix \ref{ap:formulas}.  To match the work in the appendix, we distinguish the formulas for invariants produced by the Build, the Reverse Build, and the Pattern phases of the Algorithm \ref{alg:mid}.  

\noindent \textit{Formula for $\lambda_i$ in the Build.}
For $v = 0, \dots, l$,  
$$\lambda_v =  \alpha(n+1) + l\cdot n -1-2v.$$

\noindent \textit{Formulas for $\lambda_i$ in the Reverse Build.}
For $v=k-i = n\alpha-i$ where $i=1, \dots, l+1$,
$$\lambda_{v} =  l+1+2(i-1) = l+2i-1.$$

\noindent \textit{Formulas for $\lambda_i$ in the Pattern.}

 \begin{itemize}
\item[\ding{203}] For $v = l+j\alpha+y$ where $j=0, \dots, (n-3)$ and $y=2r-1, \dots, \alpha-1$,
$$ \lambda_v=\lambda_0 - [2l+(2\alpha-r)j+2y-r] = \lambda_0-[2l+(\alpha+l)j+2y-(\alpha-l)].$$
\item[\ding{204}] For $v = l+j\alpha$ where $j=1, \dots, n-2$,
$$\lambda_v = \lambda_0 - [j(2\alpha-r) +2l = \lambda_0-[j(l+\alpha)+2l]].\footnote{Note that when $n=2$, the ranges for $j$ in \ding{203} and \ding{204} are empty.  This reflects the fact that Algorithm \ref{alg:mid} only includes the Partial Pattern Block when $n=2$.
}$$
\item[\ding{205}] For $ v= l+j\alpha+2p$ where $j=0, \dots, n-2$ and $p = 1, \dots, r-1 = \alpha-l-1$,
$$\lambda_v = \lambda_0 - [2l+(\alpha+l) j + 2p + p].$$
\item[\ding{206}] For $v = l+j\alpha+2p-1$ where $j = 0, \dots, (n-2)$ and $p = 1,\dots, r-1$, 
$$\lambda_v = \lambda_0 - [2l+(\alpha+l)j+2p-2 + p]$$
\end{itemize}

\subsubsection{The Hilbert function of $J$}
\label{sec:midsum}

Suppose that $J=(x^k, x^{k-1}y^{\lambda_{k-1}}, \dots, xy^{\lambda_1}, y^{\lambda_0})$ where the $\lambda_i$ are the invariants produced by Algorithm \ref{alg:mid} and are given by the formulas in Section \ref{sec:largeformulas}.  By Proposition \ref{prop:HilbofJ}
$$H_J(t) = \sum_{i=0}^{k-1} {t-\lambda_i-i+(m\!\!-\!\!2) \choose (m\!\!-\!\!2)} + {t-k+(m\!\!-\!\!1) \choose (m\!\!-\!\!1)}.$$ 
  Set 
 $$X_j := t-n\alpha-jl$$ 
 and 
 $$Y_j := t-(n+1)\alpha-jl.$$  
In Section \ref{apsec:midcalculations} of Appendix \ref{ap:calculations}, we simplify the partial sums $\sum_i {t-\lambda_i-i+m-2 \choose m-2}$ as $i$ ranges over the Build, Reverse Build, and Pattern.  Adding these together with $ {t-n\alpha+ m-1\choose m-1}$ we get the following expression for $H_J(t)$ when $n \geq 3$.
 
\begin{eqnarray*}
 H_{J}(t) &=&  \sum_{j=2}^n {X_j +m\!\!-\!\!1 \choose m\!\!-\!\!1} + \sum_{j=2}^n{X_j+m\!\!-\!\!2 \choose m\!\!-\!\!1} - \sum_{j=3}^n {X_j +m\!\!-\!\!2 \choose m\!\!-\!\!1}-\sum_{j=1}^{n-1} {Y_j+m\choose m\!\!-\!\!1}\\ 
 & &- \sum_{j=1}^{n-1} {Y_j+m\!\!-\!\!1 \choose m\!\!-\!\!1}+ \sum_{j=1}^{n-2} {Y_j+m\!\!-\!\!1 \choose m\!\!-\!\!1} + \sum_{j=1}^{n-2} {Y_j+m\!\!-\!\!1 \choose m\!\!-\!\!2}+{Y_{n-1}+m \choose m\!\!-\!\!1} \\
 &&- {Y_n+m\!\!-\!\!1 \choose m\!\!-\!\!1}  + {X_1+m\!\!-\!\!1\choose m\!\!-\!\!1} - {X_2+m\!\!-\!\!2 \choose m\!\!-\!\!1} + {t-n\alpha+ m\!\!-\!\!1\choose m\!\!-\!\!1} \\
&=& \sum_{j=2}^n {X_j +m\!\!-\!\!1 \choose m\!\!-\!\!1} + {X_2+m\!\!-\!\!2 \choose m\!\!-\!\!1} + \Bigg( \sum_{j=1}^{n-2} {Y_j+m\!\!-\!\!1 \choose m-2}-\sum_{j=1}^{n-1} {Y_j+m\choose m\!\!-\!\!1} \Bigg)  \\
& & -{Y_{n-1}+m\!\!-\!\!1 \choose m\!\!-\!\!1} +{Y_{n-1}+m \choose m\!\!-\!\!1} - {Y_n+m\!\!-\!\!1 \choose m\!\!-\!\!1}+  {X_1+m\!\!-\!\!1\choose m\!\!-\!\!1} - \\
& & {X_2+m\!\!-\!\!2 \choose m\!\!-\!\!1} + {t-n\alpha+ m\!\!-\!\!1\choose m\!\!-\!\!1} \\
&=& \sum_{j=2}^n {X_j +m\!\!-\!\!1 \choose m\!\!-\!\!1}  - \sum_{j=1}^{n-2} {Y_j+m\!\!-\!\!1\choose m\!\!-\!\!1} -{Y_{n-1}+m \choose m\!\!-\!\!1} - {Y_{n-1} + m\!\!-\!\!1 \choose m\!\!-\!\!1}\\
& & +{Y_{n-1}+m \choose m\!\!-\!\!1} - {Y_n+m\!\!-\!\!1 \choose m\!\!-\!\!1}+  {X_1+m\!\!-\!\!1\choose m\!\!-\!\!1}  + {t-n\alpha+ m\!\!-\!\!1\choose m\!\!-\!\!1} \\
&=& \sum_{j=1}^n {X_j +m\!\!-\!\!1 \choose m\!\!-\!\!1} - \sum_{j=1}^{n} {Y_j+m\!\!-\!\!1\choose m\!\!-\!\!1} + {t-n\alpha+ m\!\!-\!\!1\choose m\!\!-\!\!1}.  \\
\end{eqnarray*}

When $n=2$,
\begin{eqnarray*}
H_J(t) &=& {X_2+m-1 \choose m-1} - {Y_1+m \choose m-1} + {X_2+m-2 \choose m-1} - {Y_1+m-1 \choose m-1} \\
& & + {Y_{n-1} + m \choose m-1} - {Y_n + m-1 \choose m-1} + {X_1 + m-1 \choose m-1} - {X_2 +m-2 \choose m-1}\\
&=& {X_2 +m-1 \choose m-1} - {Y_1 +m-1 \choose m-1} - {Y_2 +m-1 \choose m-1} + {X_1 +m-1 \choose m-1}.\\
\end{eqnarray*}

\subsubsection{The Hilbert function of $I^n$}

By Proposition \ref{prop:HilbofIn},

\begin{eqnarray*}
H_{I^n}(t)&=&\sum_{j=1}^n \Bigg( {t-\alpha n -jl +(m\!\!-\!\!1) \choose (m\!\!-\!\!1)} - {t-\alpha(n+1) - lj + (m\!\!-\!\!1) \choose (m\!\!-\!\!1)}\Bigg)\\
&&+ {t - n\alpha+m-1 \choose m-1}.
\end{eqnarray*}

Thus, $H_{I^n}(t) = H_J(t)$ so, by Lemma \ref{lem:determineinvariants}, $J= \text{gin}(I^n)$.  Therefore, Algorithm \ref{alg:mid} produces the invariants of $\text{gin}(I^n)$ when $\beta \geq 2\alpha-1$ and $n \geq 1$.

%%%%The l divides alpha case

\subsection{The Case $\frac{3}{2} \alpha > \beta > \alpha$, $l | \alpha$, $n\geq \frac{\alpha}{l}+1$}
\label{sec:divides}

Throughout this section, set $c:=\alpha/l$; this is an integer by assumption.

\subsubsection{Formulas for the proposed invariants}
\label{sec:dividesformulas}
As in previous cases, in this section we write closed form expressions for the numbers $\lambda_i$ produced by Algorithm \ref{alg:closedivides}.  See Section \ref{apsec:dividesformulas} of Appendix \ref{ap:formulas} for details on how the formulas stated here were obtained.  To be consistent with the work there, the formulas coming from each of the three phases of the algorithm (the Build, Reverse Build, and Pattern) are recorded separately.

\textit{Formulas for $\lambda_i$ the Build.}
\begin{itemize}
\item[\ding{202}] For $v=0, \dots, l$, 
$$\lambda_v = \lambda_0-2v.$$
\item[\ding{203}] For $v=l(1+\cdots+q) +(q+1)j$ where $q=1, \dots, (c-2)$, $j=1, \dots, l$, 
$$\lambda_v = \lambda_0-[(2+\cdots+(q+1))l+(q+1)j+j]$$
\item[\ding{204}] For $v=l(1+\cdots + q) +(q+1)j -x$ where $q=1, \dots, (c-2)$, $j=1, \dots, l$, $x = 1,\dots, q$, 
$$\lambda_v=\lambda_0-[(2+\cdots+(q+1))l+(q+1)j-x+j-1]$$
\end{itemize}

\textit{Formulas for $\lambda_i$ in the Reverse Build.}
\begin{itemize}
\item[\ding{202}]  For $v=(k-1)-j$ where  $j=0, \dots, l$, 
$$\lambda_v = \lambda_{k-1}+2j.$$
\item[\ding{203}] For $v=(k-1)-(l(1+\cdots+q) +(q+1)j)$ where $q=1, \dots, (c-2)$, $j=1, \dots, l$,
$$\lambda_v = \lambda_{k-1}+[(2+\cdots+(q+1))l+(q+1)j+j].$$
However, when $q=c-2$ and $j=l$, $\lambda_v$ is in the Pattern, not in the Reverse Build.
\item[\ding{204}] For $v=(k-1)-(l(1+\cdots + q) +(q+1)j -x)$ where $q=1, \dots, (c-2)$, $j=1, \dots, l$, $x = 1,\dots, q$, 
$$\lambda_v=\lambda_{k-1}+[(2+\cdots+(q+1))l+(q+1)j-x+j-1].$$
However, when $q=c-2$, $j=l$, and $x=1$, $\lambda_v$ is in the Pattern, not the Reverse Build.
\end{itemize}

\textit{Formulas for $\lambda_i$ in the Pattern.}

 Let 
$$E:= l(1+\cdots+(c-1))$$
and
$$B:=l(2+\cdots + c).$$

\begin{itemize}
\item[\ding{202}]  For $v=E+jc+i$ where $j=0, \dots, ln-\alpha+l-1$, $i=1, \dots, c-1$,
$$\lambda_v = \lambda_0-B-[j(c+1)+i]$$
\item[\ding{203}] For $v = E+jc$ where $j=1, \dots, ln-\alpha+l-1$,
$$\lambda_v = \lambda_0-B-[j(c+1)]$$
\end{itemize}

\subsubsection{The Hilbert function of $J$}
\label{sec:dividessum}

As before we set $$X_j = t-n\alpha-jl$$ and $$Y_j = t-(n+1)\alpha-jl.$$ 
 
Note that $X_c = Y_0$ and $Y_{n-c+1}=X_{n+1}$:
$$X_c = t-n\alpha-cl=t-(n+1)\alpha = Y_0$$
$$Y_{n-c+1} = t-(n+1)\alpha-nl+lc-l = t-n\alpha-l(n+1) = X_{n+1}.$$

We will apply these relations in the calculations below.

Let $J = (x^k, x^{k-1}y^{\lambda_{k-1}}, \dots, xy^{\lambda_1}, y^{\lambda_0})$ where the $\lambda_i$ are the numbers produced by Algorithm  \ref{alg:closedivides}  and are given by the formulas in Section \ref{sec:dividesformulas}.  To compute $H_J(t)$ we use the formula from Proposition \ref{prop:HilbofJ}.

Section \ref{apsec:dividescalculations} of Appendix \ref{ap:calculations}  contains expressions for the partial sums $\sum_i {t-\lambda_i-i+m-2 \choose m-2}$ as $i$ ranges over the Build, the Reverse Build, and the Pattern.  Adding these together with ${t-n\alpha +m-1 \choose m-1}$, we obtain the following expression for $H_J(t)$.

\begin{eqnarray*}
 H_{J}(t) &=&  {Y_{n-1}+m \choose m\!\!-\!\!1} -{Y_n +m\!\!-\!\!1 \choose m\!\!-\!\!1} +{Y_{n-c+1} + m \choose m\!\!-\!\!1}-{Y_{n-1}+m \choose m\!\!-\!\!1} \\
 &&+ (c-2){Y_{n-c+1}+m\!\!-\!\!1 \choose m\!\!-\!\!1} - \sum_{j=n-c+2}^{n-1} {Y_j+m\!\!-\!\!1 \choose m\!\!-\!\!1} + {X_1+ m\!\!-\!\!1 \choose m\!\!-\!\!1} - {X_2+m-2 \choose m\!\!-\!\!1}   \\
 & & +{X_2+m-2 \choose m\!\!-\!\!1} - {X_c+m-2 \choose m\!\!-\!\!1} +  \sum_{j=2}^{c-1} {X_j +m\!\!-\!\!1 \choose m\!\!-\!\!1} -(c-2) {X_c+m\!\!-\!\!1 \choose m\!\!-\!\!1}\\
 & & + (c-1) \Bigg[{Y_0+m\!\!-\!\!1 \choose m\!\!-\!\!1} -{X_{n+1}+m\!\!-\!\!1 \choose m\!\!-\!\!1} \Bigg] + {Y_0+m \choose m\!\!-\!\!1} -{X_{n+1}+m  \choose m\!\!-\!\!1}\\
 & & -{Y_0+m-2 \choose m-2} -{Y_0+m\!\!-\!\!1 \choose m-2}+{t-n\alpha+m\!\!-\!\!1 \choose m\!\!-\!\!1}\\
 &=&-\sum_{j=n-c+2}^n {Y_j+m\!\!-\!\!1 \choose m\!\!-\!\!1} + {Y_{n-c+1} + m \choose m\!\!-\!\!1}-{X_{n+1}+m \choose m\!\!-\!\!1} + (c-2) {Y_{n-c+1} +m\!\!-\!\!1 \choose m\!\!-\!\!1} \\
 & & -(c-1) {X_{n+1} + m\!\!-\!\!1 \choose m\!\!-\!\!1} + {Y_0+m\!\!-\!\!1 \choose m\!\!-\!\!1} + (c-1){Y_0 + m\!\!-\!\!1 \choose m\!\!-\!\!1}\\
 & &  - {Y_0+ m\!\!-\!\!2 \choose m\!\!-\!\!2} -  (c-2) {Y_0 +m\!\!-\!\!1 \choose m\!\!-\!\!1} - {X_c+ m\!\!-\!\!2 \choose m\!\!-\!\!1} \\
 & & + \sum_{j=1}^{c-1} {X_j+m\!\!-\!\!1 \choose m\!\!-\!\!1} + {t-n\alpha +m\!\!-\!\!1 \choose m\!\!-\!\!1}\\
 & =& -\sum_{j=n-c+2}^n {Y_j+m\!\!-\!\!1 \choose m\!\!-\!\!1} - {Y_{n-c+1} + m\!\!-\!\!1 \choose m\!\!-\!\!1}+ c{Y_0+m\!\!-\!\!1 \choose m\!\!-\!\!1}-(c-2) {Y_0 + m\!\!-\!\!1\choose m\!\!-\!\!1} \\
 & & - {Y_0 + m\!\!-\!\!2 \choose m\!\!-\!\!2} - {Y_0 + m\!\!-\!\!2 \choose m\!\!-\!\!1}  + \sum_{j=1}^{c-1} {X_j+m\!\!-\!\!1 \choose m\!\!-\!\!1}+ {t-n\alpha +m\!\!-\!\!1 \choose m\!\!-\!\!1}\\
 &=& -\sum_{j=n-c+1}^n {Y_j+m\!\!-\!\!1 \choose m\!\!-\!\!1} + 2{Y_0+ m\!\!-\!\!1 \choose m\!\!-\!\!1} - {Y_0 + m\!\!-\!\!1 \choose m\!\!-\!\!1} \\
 & & + \sum_{j=1}^{c-1} {X_j+m\!\!-\!\!1 \choose m\!\!-\!\!1} + {t-n\alpha +m\!\!-\!\!1 \choose m\!\!-\!\!1}\\
 &=& -\sum_{j=n-c+1}^n {Y_j+m\!\!-\!\!1 \choose m\!\!-\!\!1}+ \sum_{j=1}^{c} {X_j+m\!\!-\!\!1 \choose m\!\!-\!\!1} + {t-n\alpha +m\!\!-\!\!1 \choose m\!\!-\!\!1}\\
 \end{eqnarray*}

\subsubsection{The Hilbert function of $I^n$}
\label{sec:dividesrewrite}

 We may take advantage of the assumptions that $\frac{3}{2} \alpha > \beta > \alpha$ and $l | \alpha$ to rewrite the expression of $H_{I^n}(t)$ from Proposition \ref{prop:HilbofIn} as follows:

$$H_{I^n}(t) =  \sum_{j=1}^{c} {X_j+m-1\choose m-1} -\sum_{j=n-c+1}^{n} {Y_j+m-1 \choose m-1}+{t-n\alpha+m-1 \choose m-1}.$$

For the details of this calculation see Section \ref{apsec:dividescalculations} of Appendix~\ref{ap:calculations}.

Since $H_{I^n}(t) = H_J(t)$, Lemma \ref{lem:determineinvariants} implies that $\text{gin}(I^n) = J$ and that the numbers produced by Algorithm \ref{alg:closedivides} are the invariants of $\text{gin}(I^n)$ when $\frac{3}{2}\alpha > \beta >\alpha$, $(\beta-\alpha) | \alpha$, and $n \geq \frac{\alpha}{\beta-\alpha} +1$.

%%%%l does not divide alpha case

\subsection{The Case $\frac{3}{2} \alpha > \beta > \alpha$, $l \nmid  \alpha$, $n \geq \lceil \frac{\alpha}{l} \rceil+1$}

Throughout this section, set $c:=\lceil \frac{\alpha}{l} \rceil$ and $d := \alpha \mod l$.  Then 
$$d=\alpha-l(c-1) = \alpha-lc+l$$
so 
$$lc = \alpha+l-d.$$

This relation will be used during the calculations in this section.  

\subsubsection{Formulas for the proposed invariants}
\label{sec:notdivideformulas}
In this section we present closed form expressions for the $\lambda_i$ produced by Algorithm \ref{alg:closedoesnotdivide}.  As before, formulas for the $\lambda_i$ produced by the Build, the Reverse Build, and the Pattern are recorded separately. Details of how these formulas were obtained from Algorithm \ref{alg:closedoesnotdivide} can be found in Section \ref{apsec:notdivideformulas} of Appendix \ref{ap:formulas}. 

\textit{Formulas for $\lambda_i$ in the Build and Reverse Build.}

The formulas for $\lambda_i$ produced by the Build and the Reverse Build are exactly the same as those in the Build and Reverse Build in the previous case; see Section \ref{sec:dividesformulas} for the formulas.

\textit{Formulas for $\lambda_i$ in the Pattern.}

Set $E = l(1+ 2+ \cdots + (c-1))$ and $B = l(2+3+ \cdots +c)$. 

\begin{itemize}
\item[\ding{202}]  For $v = E+p\alpha+jc+i$ where $p=0, \dots, n-c$, $j=0,\dots, d-1$, and $i=1, \dots, c-1$,
$$\lambda_v=\lambda_0-B-[p(l+\alpha)+j(c+1)+i]$$
\item[\ding{203}] For $v = E+p\alpha+jc$ where $p=0, \dots, n-c$, and $j=1, \dots, d$, 
$$\lambda_v = \lambda_0-B-[p(l+\alpha+j(c+1)]$$
\item[\ding{204}] For $v=E+p\alpha+dc+j(c-1)+i$ where $p=0, \dots, n-c-1$, $j=0, \dots, l-d-1$, and $i=1, \dots, c-2$
$$\lambda_v=\lambda_0-B-[p(l+\alpha)+d(c+1)+jc+i]$$
\item[\ding{205}] For $v=E+p\alpha+dc+j(c-1)$ where $p=0, \dots, n-c-1$, and $j=1, \dots, l-d$,
$$\lambda_v = \lambda_0-B-[p(l+\alpha)+d(c+1)+jc]$$
\end{itemize}

\subsubsection{The Hilbert function of $J$}
\label{sec:correctlnotdivide}

Set  $X_j := t-n\alpha-lj$ and $Y_j := t-(n+1)\alpha-lj$ as before; we also set $$Z_j := t-(n+1)\alpha-lj+d.$$ 
Note that 
$$Y_0=X_c = t-n\alpha-cl=t-n\alpha-\alpha-l+d=t-(n+1)\alpha-l+d = Z_1;$$
we will apply this identity to get the third equality below.

Let $J=(x^k, x^{k-1}y^{\lambda_{k-1}}, \dots, xy^{\lambda_1}, y^{\lambda_0})$ where the $\lambda_i$ are the numbers produced by Algorithm \ref{alg:closedoesnotdivide}.  We use the closed form expressions for the $\lambda_i$ given above in the formula for $H_J(t)$ from Proposition \ref{prop:HilbofJ}. Section \ref{apsec:notdividecalculations} of Appendix \ref{ap:calculations} contains expressions for the partial sums $\sum_i {t-\lambda_i-i+m-2 \choose m-2}$ as $i$ ranges over the Build, the Reverse Build, and the Pattern.  Adding these together with ${t+n\alpha+m-1 \choose m-1}$, we obtain the following expression for $H_J(t)$.

\begin{eqnarray*}
 H_{J}(t) &=&  {X_1+ m\!\!-\!\!1  \choose  m\!\!-\!\!1 }-{X_2+ m\!\!-\!\!2  \choose  m\!\!-\!\!1 } + {X_2+m\!\!-\!\!2  \choose  m\!\!-\!\!1 }-{X_c+m\!\!-\!\!2  \choose  m\!\!-\!\!1 }+\sum_{j=2}^{c-1} {X_j+ m\!\!-\!\!1  \choose  m\!\!-\!\!1 } \\
 &&-(c-2){X_c+ m\!\!-\!\!1  \choose  m\!\!-\!\!1 }+{Y_{n-1}+m \choose  m\!\!-\!\!1 }-{Y_n+ m\!\!-\!\!1  \choose  m\!\!-\!\!1 }+{Y_{n-c+1}+m \choose  m\!\!-\!\!1 }-{Y_{n-1}+m \choose  m\!\!-\!\!1 } \\ 
 && +(c-2){Y_{n-c+1}+ m\!\!-\!\!1  \choose  m\!\!-\!\!1 }-\sum_{j=n-c+2}^{n-1} {Y_j+ m\!\!-\!\!1  \choose  m\!\!-\!\!1 }+(c-1)\sum_{j=1}^{n-c+1} {Z_j+ m\!\!-\!\!1  \choose  m\!\!-\!\!1 }\\
 & & +\sum_{j=1}^{n-c+1} {Z_j+m \choose  m\!\!-\!\!1 }-(c-2) \sum_{j=2}^{n-c+1} {Z_j+ m\!\!-\!\!1 \choose  m\!\!-\!\!1 } -\sum_{j=2}^{n-c+1} {Z_j+m \choose  m\!\!-\!\!1 } \\
 & &-(c-1)\sum_{j=1}^{n-c+1} {Y_j+ m\!\!-\!\!1 \choose  m\!\!-\!\!1 } -\sum_{j=1}^{n-c+1} {Y_j+m \choose  m\!\!-\!\!1 }\\
 & & +(c-2)\sum_{j=1}^{n-c} {Y_j+ m\!\!-\!\!1  \choose  m\!\!-\!\!1 } +\sum_{j=1}^{n-c} {Y_j+m \choose  m\!\!-\!\!1 } -{Z_1+m\!\!-\!\!2  \choose m\!\!-\!\!2 }\\
 & & -{Z_1+ m\!\!-\!\!1  \choose m\!\!-\!\!2 }+{t-n\alpha+ m\!\!-\!\!1  \choose  m\!\!-\!\!1 }\\
 &=&\sum_{j=1}^{c-1} {X_j+ m\!\!-\!\!1  \choose  m\!\!-\!\!1 }-{X_c+m\!\!-\!\!2  \choose  m\!\!-\!\!1 } -(c-2){X_c+ m\!\!-\!\!1  \choose  m\!\!-\!\!1 } \\
 & & -\sum_{j=n-c+2}^{n} {Y_j+ m\!\!-\!\!1  \choose  m\!\!-\!\!1 }+{Y_{n-c+1}+m \choose  m\!\!-\!\!1 }+(c-2){Y_{n-c+1}+ m\!\!-\!\!1  \choose  m\!\!-\!\!1 } \\
 & & -\sum_{j=1}^{n-c} {Y_j+ m\!\!-\!\!1  \choose  m\!\!-\!\!1 } -(c-1){Y_{n-c+1}+ m\!\!-\!\!1  \choose  m\!\!-\!\!1 }-{Y_{n-c+1}+m \choose  m\!\!-\!\!1 } \\
 & & +\sum_{j=2}^{n-c+1} {Z_j+ m\!\!-\!\!1  \choose  m\!\!-\!\!1 } +(c-1) {Z_1+ m\!\!-\!\!1  \choose  m\!\!-\!\!1 }+{Z_1+m \choose  m\!\!-\!\!1 } -{Z_1+m\!\!-\!\!2  \choose m\!\!-\!\!2 }\\
 & & -{Z_1+ m\!\!-\!\!1  \choose m\!\!-\!\!2 } + {t-n\alpha+ m\!\!-\!\!1  \choose  m\!\!-\!\!1 }\\
 &=& \sum_{j=1}^{c-1} {X_j+ m\!\!-\!\!1  \choose  m\!\!-\!\!1 }-\sum_{j=1}^{n} {Y_j+ m\!\!-\!\!1  \choose  m\!\!-\!\!1 }+\sum_{j=2}^{n-c+1} {Z_j+ m\!\!-\!\!1  \choose  m\!\!-\!\!1 }+{Z_1+ m\!\!-\!\!1  \choose  m\!\!-\!\!1 }\\
 &&-{Z_1+m\!\!-\!\!2  \choose  m\!\!-\!\!1 } +{Z_1+m \choose  m\!\!-\!\!1 }-{Z_1+m\!\!-\!\!2  \choose m\!\!-\!\!2 } -{Z_1+ m\!\!-\!\!1 \choose m\!\!-\!\!2 }+{t-n\alpha+ m\!\!-\!\!1  \choose  m\!\!-\!\!1 }\\
&=&\sum_{j=1}^{c-1} {X_j+ m\!\!-\!\!1  \choose  m\!\!-\!\!1 }-\sum_{j=1}^{n} {Y_j+ m\!\!-\!\!1  \choose  m\!\!-\!\!1 }+\sum_{j=1}^{n-c+1} {Z_j+ m\!\!-\!\!1  \choose  m\!\!-\!\!1 } + {t-n\alpha+ m\!\!-\!\!1  \choose  m\!\!-\!\!1 } \\
\end{eqnarray*}

\subsubsection{The Hilbert function of $I^n$}
\label{sec:notdividerewrite}

Using the assumption that $\frac{3}{2} \alpha > \beta > \alpha$ we may rewrite the Hilbert function of $I^n$ from Proposition \ref{prop:HilbofIn} in terms of $c$, $d$, $l$, and $n$.   As before, we set $X_j = t-n\alpha-lj$, $Y_j = t-(n+1)\alpha-lj$, and $Z_j := t-(n+1)\alpha-lj+d.$    Then
$$H_{I^n}(t) =  \sum_{j=1}^{c-1} {X_j+ m\!\!-\!\!1 \choose  m\!\!-\!\!1 } + \sum_{i=1}^{n-c+1} {Z_j+ m\!\!-\!\!1  \choose  m\!\!-\!\!1 } -\sum_{j=1}^n {Y_j+ m\!\!-\!\!1  \choose  m\!\!-\!\!1 }+{t-n\alpha+ m\!\!-\!\!1  \choose  m\!\!-\!\!1 }.$$
For details of this calculation see Section \ref{apsec:notdividecalculations} of Appendix~\ref{ap:calculations}.

Since $H_{I^n}(t) = H_J(t)$, we conclude by Lemma \ref{lem:determineinvariants} that $J= \text{gin}(I^n)$.  That is, the $\lambda_i$ produced by Algorithm \ref{alg:closedoesnotdivide} are the invariants of $\text{gin}(I^n)$ when $\frac{3}{2}\alpha > \beta >\alpha$, $(\beta-\alpha) \nmid \alpha$, and $n \geq \lceil \frac{\alpha}{\beta-\alpha} \rceil+1$. 

%%%The Close alpha, beta, small n case

\subsection{The Case $\frac{3}{2} \alpha > \beta > \alpha$,  $n < \lceil \frac{\alpha}{l} \rceil +1$}

\subsubsection{Formulas for the proposed invariants}
\label{sec:closesmallnformulas}

In this section we present closed form expressions for the $\lambda_i$ produced by Algorithm \ref{alg:closesmalln}.  As before, formulas for the $\lambda_i$ arising from the Build, the Reverse Build, and the Pattern phases of the algorithm are recorded separately.  Details of the derivations of these formulas can be found in Section \ref{apsec:closesmallnformulas} of Appendix \ref{ap:formulas}.

\textit{Formulas for $\lambda_i$ the Build.}
\begin{itemize}
\item[\ding{202}] For $v=0, \dots, l$, 
$$\lambda_v = \lambda_0-2v.$$
\item[\ding{203}] For $v=l(1+\cdots+q) +(q+1)j$ where $q=1, \dots, (n-2)$, $j=1, \dots, l$, 
$$\lambda_v = \lambda_0-[(2+\cdots+(q+1))l+(q+1)j+j]$$
\item[\ding{204}] For $v=l(1+\cdots + q) +(q+1)j -x$ where $q=1, \dots, (n-2)$, $j=1, \dots, l$, $x = 1,\dots, q$, 
$$\lambda_v=\lambda_0-[(2+\cdots+(q+1))l+(q+1)j-x+j-1]$$
\end{itemize}

\textit{Formulas for $\lambda_i$ in the Reverse Build.}
\begin{itemize}
\item[\ding{202}]  For $v=(k-1)-j$ where  $j=0, \dots, l$, 
$$\lambda_v = \lambda_{k-1}+2j.$$
\item[\ding{203}] For $v=(k-1)-(l(1+\cdots+q) +(q+1)j)$ where $q=1, \dots, (n-2)$, $j=1, \dots, l$,
$$\lambda_v = \lambda_{k-1}+[(2+\cdots+(q+1))l+(q+1)j+j].$$
However, when $q=n-2$ and $j=l$, $\lambda_v$ is in the Pattern, not in the Reverse Build.
\item[\ding{204}] For $v=(k-1)-(l(1+\cdots + q) +(q+1)j -x)$ where $q=1, \dots, (n-2)$, $j=1, \dots, l$, $x = 1,\dots, q$, 
$$\lambda_v=\lambda_{k-1}+[(2+\cdots+(q+1))l+(q+1)j-x+j-1].$$
However, when $q=n-2$, $j=l$, and $x=1$, $\lambda_v$ is in the Pattern, not the Reverse Build.
\end{itemize}

\textit{Formulas for $\lambda_i$ in the Pattern.}

Let 
$$E:= l(1+\cdots+(n-1))$$
and
$$B:=l(2+\cdots + n).$$

\begin{itemize}
\item[\ding{202}]  For $v=E+jn+i$ where $j=0, \dots, \beta-nl-1$, $i=1, \dots, n-1$,
$$\lambda_v = \lambda_0-B-[j(n+1)+i].$$
\item[\ding{203}] For $v = E+jn$ where $j=1, \dots, \beta-nl-1$,
$$\lambda_v = \lambda_0-B-[j(n+1)].$$
\end{itemize}

\subsubsection{The Hilbert function of $J$}
\label{sec:closesmallnsum}

As before, we set $X_j = t-n\alpha-lj$ and $Y_j = t-(n+1)\alpha-lj$.  

Let $J= (x^k, x^{k-1}y^{\lambda_{k-1}}, \dots, xy^{\lambda_1}, y^{\lambda_0})$ where the $\lambda_i$ are given by the formulas above and are the invariants produced by Algorithm \ref{alg:closesmalln}.  In this section we will compute the Hilbert function $H_J(t)$ using the expression from Proposition \ref{prop:HilbofJ}.  Section \ref{apsec:closesmallncalculations} of Appendix \ref{ap:calculations} contains simplifications of the partial sums $\sum_i {t-\lambda_i-i+m-2 \choose m-2}$ as $i$ ranges over the Build, the Reverse Build, and the Pattern.  Adding these together with ${t-n\alpha+m-1 \choose m-1}$ we have:

\begin{eqnarray*}
 H_{J}(t) &=&  {Y_{n-1}+m \choose m-1} - {Y_n+m-1 \choose m-1} + {Y_1+m \choose m-1}  - {Y_{n-1}+m \choose m-1} \\
 & & + (n-2) {Y_1+m-1\choose m-1}- \sum_{j=2}^{n-1} {Y_j+m-1 \choose m-1} + (n-1){X_n+m-1 \choose m-1}\\
 & & -(n-1) {Y_1+m-1 \choose m-1} + {X_n+m-1 \choose m-1} - {Y_1+m \choose m-1} + {X_1+m-1 \choose m-1}\\
 & & - {X_2+m-2 \choose m-1} + {X_2+m-2 \choose m-1} - {X_n+m-2 \choose m-1} - {X_n+m-2 \choose m-2}\\
 & & + \sum_{j=2}^{n-1} {X_j+m-1 \choose m-1} - (n-2) {X_n+m-1 \choose m-1} - {X_n+m-1 \choose m-2} + {t-n\alpha+m-1 \choose m-1}\\
 &=& - \sum_{j=2}^n {Y_j+m-1 \choose m-1} + {Y_1+m \choose m-1} - {Y_1+m-1 \choose m-1} - {Y_1+m \choose m-1}\\
 & & + \sum_{j=1}^{n-1} {X_j+m-1 \choose m-1} + {X_n+m-1 \choose m-1} + {X_n+m-1 \choose m-1} - {X_n+m-2 \choose m-1} \\
 & & - {X_n+m-2 \choose m-2}+ {t-n\alpha+m-1 \choose m-1}\\
 &=&-\sum_{j=1}^n {Y_j+m-1 \choose m-1} + \sum_{j=1}^{n-1} {X_j+m-1 \choose m-1} +2{X_n+m-1 \choose m-1} - {X_n+m-1 \choose m-1}\\
 & & + {t-n\alpha+m-1 \choose m-1}\\
 &=& \sum_{j=1}^n {X_j+m-1 \choose m-1} - \sum_{j=1}^{n} {Y_j +m-1 \choose m-1} + {t-n\alpha+m-1 \choose m-1}.\\
 \end{eqnarray*}

\subsubsection{The Hilbert function of $I^n$}
\label{sec:closesmallnrewrite}

By Proposition \ref{prop:HilbofIn}, the Hilbert function of $I^n$ is equal to 

\begin{eqnarray*}
H_{I^n}(t)&=& \sum_{j=1}^n \Bigg( {t-X_j +(m\!\!-\!\!1) \choose (m\!\!-\!\!1)} - {t-Y_j + (m\!\!-\!\!1) \choose (m\!\!-\!\!1)}\Bigg) +{t-n\alpha+m-1 \choose m-1}.
\end{eqnarray*}

Since $H_J(t) = H_{I^n}(t)$, we conclude by Lemma \ref{lem:determineinvariants} that $J=\text{gin}(I^n)$.  That is, the $\lambda_i$ produced by Algorithm \ref{alg:closesmalln} are the invariants of  $\text{gin}(I^n)$ when $\frac{3}{2}\alpha > \beta >\alpha$ and $2 \leq n < \lceil \frac{\alpha}{\beta-\alpha} \rceil +1$.

%%%The case alpha = beta

\subsection{The Case $\alpha=\beta$, $n \geq 1$.}

\subsubsection{Formulas for the proposed invariants}
\label{sec:equalformulas}

In this section we write a closed-form expression for the invariants produced by Algorithm \ref{alg:equal}.  See Section \ref{apsec:equalformulas} of Appendix \ref{ap:formulas} for details of how this follows from the algorithm.  For $v = qn+j$ where  $q=0, \dots, \alpha-1$ and $j=0, \dots, n-1$,
$$
\lambda_v = (n+1)\alpha-1-q(n+1)-j.
$$

\subsubsection{The Hilbert function of $J$}
Consider $J=(x^k, x^{k-1}y^{\lambda_{k-1}}, \dots, xy^{\lambda_1},y^{\lambda_0})$ where the $\lambda_i$ are given by the formula from Section \ref{sec:equalformulas} and are the invariants produced by Algorithm \ref{alg:equal}.  By Proposition \ref{prop:HilbofJ}, the Hilbert function of $J$ is

\begin{eqnarray*}
H_J(t) &=&  \sum_{i=0}^{n\alpha-1} {t-\lambda_{qn+j}-(qn+j)+(m\!\!-\!\!2) \choose (m\!\!-\!\!2)} + {t-n\alpha+(m\!\!-\!\!1) \choose (m\!\!-\!\!1)}\\
&=& \sum_{q=0}^{\alpha-1} \sum_{j=0}^{n-1} {t-(n+1)\alpha-1-q+m-2 \choose m-2}+ {t-n\alpha+(m\!\!-\!\!1) \choose (m\!\!-\!\!1)}\\
&=& n\Bigg[  {t-(n+1)\alpha+m-1+\alpha \choose m-1} - {t-(n+1)\alpha+m-1 \choose m-1}  \Bigg]+ {t-n\alpha+(m\!\!-\!\!1) \choose (m\!\!-\!\!1)} \\
&=& n\Bigg[  {t-n\alpha+m-1 \choose m-1} - {t-(n+1)\alpha+m-1 \choose m-1}  \Bigg]+ {t-n\alpha+(m\!\!-\!\!1) \choose (m\!\!-\!\!1)}.\\
\end{eqnarray*}

\subsubsection{Rewriting the Hilbert function of $I^n$} 
Under the assumption that $\alpha = \beta$, the Hilbert function of $I^n$ from Proposition \ref{prop:HilbofIn} is
\begin{eqnarray*}
H_{I^n}(t) &=& \sum_{j=1}^n \Bigg[ {t-\alpha(n-j)-\alpha j +m-1 \choose m-1} - {t-\alpha j -\alpha n -\alpha +j\alpha+m-1 \choose m-1} \Bigg]\\
&& + {t-n\alpha+(m\!\!-\!\!1) \choose (m\!\!-\!\!1)}\\
&=& n\Bigg[ {t-n\alpha+m-1 \choose m-1} - {t-\alpha(n+1) +m-1 \choose m-1} \Bigg] + {t-n\alpha+(m\!\!-\!\!1) \choose (m\!\!-\!\!1)}.
\end{eqnarray*}

Since $H_J(t) = H_{I^n}(t)$, Lemma \ref{lem:determineinvariants} implies that $J=\text{gin}(I^n)$.
Thus, the $\lambda_i$ produced by Algorithm \ref{alg:equal} are the invariants of $\text{gin}(I^n)$ when $\alpha = \beta$.

\appendix
\section{Examples}
\label{ap:examples}

This section contains examples of the sequence of numbers $\{ \lambda_i\}$ produced by each algorithm in Section \ref{sec:proposedinvariants}.  We also record the sequence of gaps $\{ g_i \}$ where $g_i = \lambda_{i-1}-\lambda_i$.  It is in these gap sequences that the patterns can be most clearly seen and the spacing used when listing the gap sequences is meant to highlight these patterns.  The figures following the example further illustrate the patterns.

\subsection{$\beta \geq 2\alpha-1$, $n \geq 1$.}  

\begin{itemize}
\item Let $(\alpha, \beta) = (4,12)$, $n=3$.
In this case $\beta -2\alpha+2 = 12-2(4)+2 = 6$.  We have the following sequence $\{\lambda_i \}$ of invariants:
$$\lambda_0 = 39,37,35, 33, 27, 25, 23, 21, 15, 13, 11, 9 = \lambda_{k-1} = \lambda_{11}$$   
The sequence of gaps $\{g_i \}$ is
$$2,2,2,6, \hspace{0.2in} 2,2,2,6, \hspace{0.2in} 2,2,2$$
See Figure \ref{fig:FigureFar} for an illustration of the patterns found in the gap sequence when $(\alpha,\beta) = (4,12)$.

\item Let $(\alpha, \beta) = (4,9)$, $n=4$.  
In this case $\beta -2\alpha+2 = 9-2(4)+2 = 3$.  We have the following sequence $\{\lambda_i \}$ of invariants:
$$\lambda_0 = 39,37,35,33,30,28,26,24,21,19,17,15,12,10,8,6 = \lambda_{k-1} = \lambda_{15}$$   
The sequence of gaps $\{g_i \}$ is
$$2,2,2,3, \hspace{0.2in} 2,2,2,3, \hspace{0.2in} 2,2,2,3, \hspace{0.2in} 2,2,2$$
\end{itemize}

\vspace{0.1in}

%%%%%
\subsection{$2\alpha > \beta > \frac{3}{2} \alpha$, $n \geq 2$.}

\begin{itemize}
\item Let $(\alpha, \beta)=(6, 10)$, $n=5$.

In this case $r = 2(6)-10 = 2$ and $l = 10-6 = 4$.  We have the following sequence $\{\lambda_i \}$ of invariants: 
$$\lambda_0 = 55,53,51,49,47,46,44,43,41,39,37,36,34,33,31, $$
$$29,27,26,24,23,21,19,17,16,14,13,11,9,7,5=\lambda_{k-1} = \lambda_{29}.$$

The sequence of gaps $\{g_i \}$ is 
$$2,2,2,2, \hspace{0.2in}1,2,1, \hspace{0.1in} 2,2,2, \hspace{0.2in}1,2,1, \hspace{0.1in} 2,2,2, \hspace{0.2in} 1,2,1, \hspace{0.1in} 2,2,2, \hspace{0.2in} 1,2,1, \hspace{0.2in}2,2,2,2$$
See Figure \ref{fig:FigureMid} for an illustration of the patterns found in the gap sequence when $(\alpha,\beta) = (6,10)$.

\item Let $(\alpha, \beta)=(7, 12)$, $n=4$.

In this case $r = 2(7)-12 = 2$ and $l = 12-7 = 5$.  We have the following sequence $\{\lambda_i \}$ of invariants: 
$$\lambda_0 = 54, 52, 50, 48, 46, 44, 43, 41, 40 , 38, 36, 34, 32, 31, 29, 28, 26, 24, 22, 20, 19, $$
$$17, 16, 14, 12, 10, 8, 6=\lambda_{k-1} = \lambda_{27}.$$

The sequence of gaps $\{g_i \}$ is 
$$2,2,2,2,2, \hspace{0.2in}1,2, \hspace{0.2in}1,2,2,2,2, \hspace{0.2in}1,2,\hspace{0.2in}1,2,2,2,2, \hspace{0.2in} 1,2, \hspace{0.2in}1,\hspace{0.2in} 2,2,2,2,2$$
\end{itemize}
%%%%%

\vspace{0.1in}

\subsection{$\frac{3}{2} \alpha > \beta > \alpha$, $l | \alpha$, $n \geq \frac{\alpha}{l} +1$.}

\begin{itemize}

\item Let $(\alpha, \beta)=(12, 15)$, $n=5$.

In this case $l = 15-12 = 3$ and $c = 12/3=4$.  We have the following sequence  $\{\lambda_i \}$ of invariants: 
$$\lambda_0 =86,84,82,80,79,77,76,74,73,71,70,69,67,66,65,63,62,61,59,58,57,$$
$$56,54,53,52,51,49,48,47,46,44,43,42,41,39,38,37,36,34,33,32,$$
$$31,29,28,27,25,24,23,21,20,19,17,16,14,13,11,10,8,6,4=\lambda_{k-1}=\lambda_{59}.$$

The sequence of gaps between the $\lambda_i$s is 
\begin{eqnarray*}
&& 2,2,2, \hspace{0.2in}1,2, \hspace{0.1in} 1,2, \hspace{0.1in}1,2,  \hspace{0.2in}1,1,2,  \hspace{0.1in} 1,1,2, \hspace{0.1in}1,1,2,  \hspace{0.2in}1,1,1,2, \hspace{0.1in}1,1,1,2, \hspace{0.1in}1,1,1,2, \\
&& \hspace{0.1in}1,1,1,2 \hspace{0.1in}1,1,1,2, \hspace{0.1in}1,1,1,2, \hspace{0.2in} 1,1, \hspace{0.1in} 2,1,1, \hspace{0.1in} 2,1,1, \hspace{0.2in} 2,1, \hspace{0.1in} 2,1, \hspace{0.1in} 2,1, \hspace{0.2in} 2,2,2
\end{eqnarray*}
See Figure \ref{fig:FigureCloseDivides} for an illustration of the patterns found in the gap sequence when $(\alpha,\beta) = (12,15)$.

\item Let $(\alpha, \beta)=(9, 12)$, $n=4$.

In this case $l = 12-9 = 3$ and $c = 9/3=3$.  We have the following sequence $\{\lambda_i \}$ of invariants: 
$$\lambda_0 =56,54,52,50,49,47,46,44,43,41,40,39,37,36,35,33,32,31,29,$$
$$28,27,25,24,23,21,20,19,17,16,14,13,11,10,8,6,4=\lambda_{k-1} = \lambda_{35}.$$

The sequence of gaps $\{g_i \}$ is 
$$ 2,2,2, \hspace{0.2in}1,2, \hspace{0.1in} 1,2, \hspace{0.1in}1,2,  \hspace{0.2in}1,1,2,  \hspace{0.1in} 1,1,2, \hspace{0.1in}1,1,2, \hspace{0.1in}1,1,2,  \hspace{0.1in}1,1,2, \hspace{0.1in}1,1, $$
$$\hspace{0.2in}2, \hspace{0.1in} 1,\hspace{0.1in}2,1,\hspace{0.1in}2,1,\hspace{0.2in}2,2,2 $$

\end{itemize}

%%%%%

\vspace{0.1in}

\subsection{$\frac{3}{2} \alpha > \beta > \alpha$, $l \nmid \alpha$, $n \geq \lceil \alpha/l \rceil+1$.}

\begin{itemize}

\item Let $(\alpha, \beta)=(10,14)$, $n=4$.  

In this case $l = 14-10 = 4$, $c = \lceil 10/4 \rceil = 3$, and $d = 2$.  We have the following sequence $\{\lambda_i \}$ of invariants: 
$$\lambda_0 =65,63,31,59,57,56,54,53,51,50,48,47,45,44,43,41,40,39,37,$$
$$36,34,33,31,30,29,27,26,25,23,22,20,19,17,16,14,13,11,9,7,5=\lambda_{k-1} = \lambda_{39}.$$

The sequence of gaps $\{g_i \}$ in this example is 
$$2,2,2,2, \hspace{0.2in}1,2, \hspace{0.1in} 1,2, \hspace{0.1in} 1,2, \hspace{0.1in} 1,2, \hspace{0.2in} 1,1,2, \hspace{0.1in} 1,1,2, $$
$$\hspace{0.2in} 1,2, \hspace{0.1in} 1,2, \hspace{0.2in} 1,1,2, \hspace{0.1in} 1,1,2, \hspace{0.2in} 1, \hspace{0.1in} 2,1, \hspace{0.1in} 2,1, \hspace{0.1in} 2,1, \hspace{0.2in} 2,2,2,2$$
See Figure \ref{fig:FigureCloseDoesNotDivide} for an illustration of the patterns found in the gap sequence when $(\alpha,\beta) = (10,14)$.

\item Let $(\alpha, \beta)=(7,9)$, $n=6$.

In this case $l = 12-9 = 3$, $c = \lceil 9/2 \rceil = 5$, and $d=2$.  We have the following sequence $\{\lambda_i \}$ of invariants: 
$$\lambda_0 =60,58,56,55,53,52,50,49,48,46,45,44,42,41,40,39,37,36,35,$$
$$33,32,31,30,28,27,26,24,23,22,21,19,18,17,15,14,13,11,10,8,7,5,3=\lambda_{k-1}= \lambda_{41}.$$

The sequence of gaps $\{g_i \}$ in this example is 
$$2,2, \hspace{0.2in}1,2, \hspace{0.1in} 1,2, \hspace{0.2in} 1,1,2, \hspace{0.1in} 1,1,2, \hspace{0.2in} 1,1,1,2, \hspace{0.1in}1,1,2\hspace{0.2in} 1,1,1,2,\hspace{0.1in}1,1,2,\hspace{0.2in}1,1,1,\hspace{0.2in} 2, \hspace{0.2in} $$
$$1,1, \hspace{0.1in}2,1,1,\hspace{0.2in}2,1,\hspace{0.1in}2,1,\hspace{0.1in}2,2 $$
\end{itemize}

%%%%%

\vspace{0.1in}

\subsection{$\frac{3}{2} \alpha > \beta > \alpha$, $n < \lceil \alpha/l \rceil+1$.}

\begin{itemize}

\item Let $(\alpha, \beta)=(6,8)$, $n=3$.  

In this case $l = 8-6 = 2$.  We have the following sequence $\{\lambda_i \}$ of invariants: 
$$\lambda_0 =29,27,25,24,22,21,19,18,17,15,14,13,11,10,8,7,5,3=\lambda_{k-1} = \lambda_{23}.$$

The sequence of gaps $\{g_i \}$ in this example is 
$$2,2, \hspace{0.2in}1,2, \hspace{0.1in} 1,2,  \hspace{0.2in} 1,1,2, \hspace{0.1in} 1,1,2, \hspace{0.1in}  \hspace{0.15in} 1, \hspace{0.1in} 2,1,  \hspace{0.2in} 2,2$$
See Figure \ref{fig:FigureCloseSmalln} for an illustration of the patterns found in the gap sequence when $(\alpha,\beta) = (6,8)$ and $n=3$.

\item Let $(\alpha, \beta)=(7,10)$, $n=2$.

In this case $l = 10-7 = 3$.  We have the following sequence  $\{\lambda_i \}$ of invariants: 
$$\lambda_0 =26,24,22,20,19,17,16,14,13,11,10,8,6,4=\lambda_{k-1}= \lambda_{13}.$$

The sequence of gaps $\{g_i \}$  in this example is 
$$2,2,2, \hspace{0.2in}1,2, \hspace{0.1in} 1,2, \hspace{0.1in} 1,2, \hspace{0.1in} 1, \hspace{0.1in} 2, \hspace{0.1in} 2,2 $$
\end{itemize}

%%%%%

\vspace{0.1in}

\subsection{$\alpha=\beta$, $n \geq 1$.}

\begin{itemize}

\item Let $(\alpha, \beta) = (3,3)$, $n=5$.  We have the following sequence $\{\lambda_i \}$ of invariants:
$$\lambda_0 = 17, 16, 15, 14, 13, 11, 10, 9, 8, 7, 5, 4, 3, 2, 1 = \lambda_{k-1} = \lambda_{15}.$$
The sequence of gaps $\{g_i \}$ is

$$1,1,1,1,2, \hspace{0.2in}1,1,1,1,2, \hspace{0.1in} 1,1,1,1 .$$
See Figure \ref{fig:FigureEqual} for an illustration of the patterns found in the gap sequence when $(\alpha, \beta) = (3,3)$ and $n=5$.

\item Let $(\alpha,\beta) = (4,4)$, $n=2$. 
 We have the following sequence $\{\lambda_i \}$ of invariants:
 $$\lambda_0 = 11, 10, 8, 7, 5, 4, 2, 1 = \lambda_{k-1} = \lambda_{7}.$$
  The sequence of gaps $\{g_i \}$ is $$1,2, \hspace{0.2in}1,2, \hspace{0.1in} 1,2, \hspace{0.2in} 1$$
 
 \end{itemize}

%%%Here are the figures
\newpage

\begin{landscape}

\begin{figure}
\centering
\includegraphics[height=4cm]{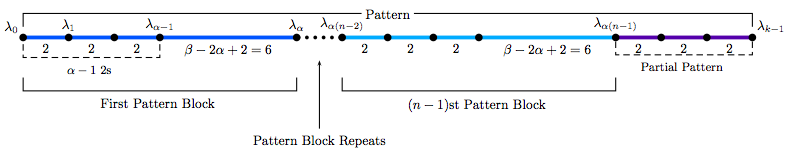}
\caption{Example of the output of Algorithm \ref{alg:far} ($\beta \geq 2\alpha-1$) for case $\alpha=4$ and $\beta=12$. Note that $l = 8$.}
\label{fig:FigureFar}
\end{figure}

\vspace{0.2in}

\begin{figure}
\centering
\includegraphics[height=4.5cm]{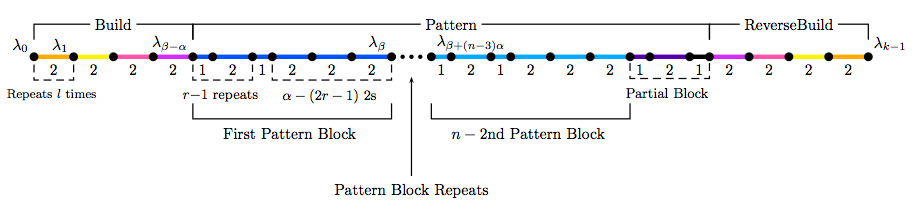}
\caption{Example of the output of Algorithm \ref{alg:mid} ($2\alpha -1 > \beta \geq \frac{3}{2}\alpha$) for case $\alpha=6$ and $\beta=10$. Note that $l=4$ and $r=2\alpha-\beta=2$.}
\label{fig:FigureMid}
\end{figure}

\begin{figure}
\centering
\includegraphics[height=8cm]{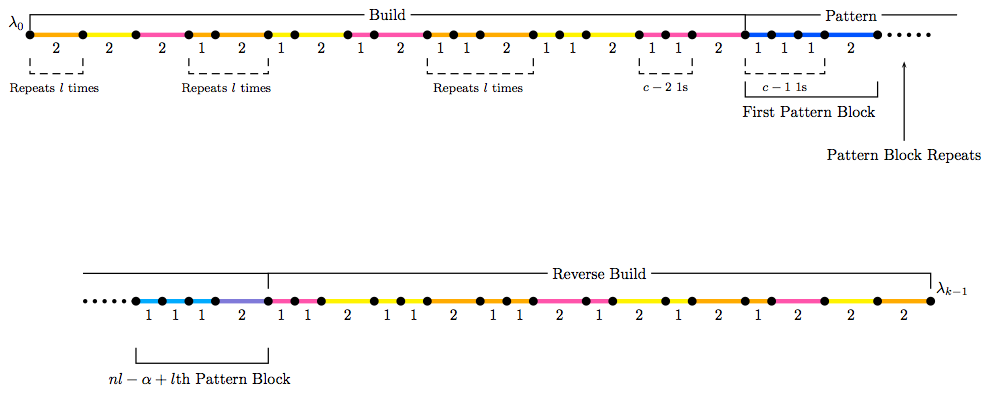}
\caption{Example of the output of Algorithm \ref{alg:closedivides} ($\frac{3}{2}\alpha > \beta >\alpha$, $(\beta-\alpha) | \alpha$, and $n \geq \frac{\alpha}{\beta-\alpha} +1$) for case $\alpha=12$ and $\beta=15$. Note that $l=3$ and $c = \alpha/l=4$.}
\label{fig:FigureCloseDivides}
\end{figure}

\begin{figure}
\centering
\includegraphics[height=7.5cm]{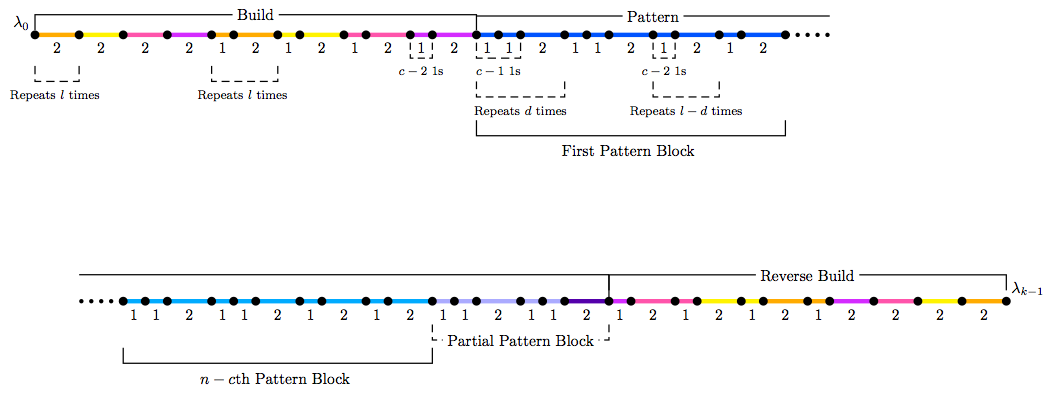}
\caption{Example of the output of Algorithm \ref{alg:closedoesnotdivide} ($\frac{3}{2}\alpha > \beta >\alpha$, $(\beta-\alpha) \nmid \alpha$, and $n \geq \lceil \frac{\alpha}{\beta-\alpha} \rceil+1$) for case $\alpha=10$ and $\beta=14$. Note that $l=4$, $c=3$, and $d = \alpha \mod l = 2$.}
\label{fig:FigureCloseDoesNotDivide}
\end{figure}

\begin{figure}
\centering
\includegraphics[height=3cm]{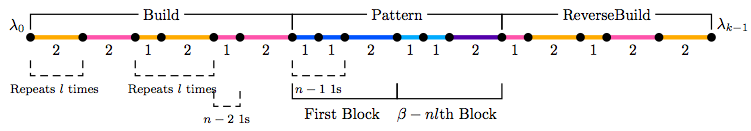}
\caption{Example of the output of Algorithm \ref{alg:closesmalln} ($\frac{3}{2}\alpha > \beta >\alpha$ and $2 \leq n < \lceil \frac{\alpha}{\beta-\alpha} \rceil +1$) for case $\alpha=6$, $\beta=8$, and $n=3$. Note that $l=2$.}
\label{fig:FigureCloseSmalln}
\end{figure}

\vspace{0.2in}

\begin{figure}
\centering
\includegraphics[height=2.8cm]{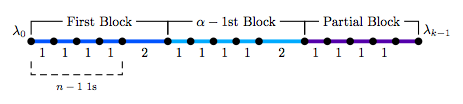}
\caption{Example of the output of Algorithm \ref{alg:equal} ($\alpha=\beta$) for case $\alpha=\beta=3$ and $n=5$.}
\label{fig:FigureEqual}
\end{figure}

\end{landscape}
\section{Details on Formulas for the Invariants}
\label{ap:formulas}

In this section we fill in details of how the formulas for the proposed invariants follow from the algorithms in Section~\ref{sec:proposedinvariants}.  Recall from Section~\ref{sec:proposedinvariants} that each of the algorithms may be divided into three phases:  the Build, the Reverse Build, and the Pattern.  The sequence of gaps $g_i = \lambda_{i-1}-\lambda_i$ arising from the Reverse Build in a particular case is the reverse of the sequence of gaps arising from the Build.  Therefore, we can use closed form expressions for invariants arising from the Build to write expressions for invariants arising from the Reverse Build.  Further, the Pattern consists of repeats of some Pattern Blocks so it will be convenient to write indices within the Pattern in a way that reflects both the number of Pattern Blocks which have passed and the position in the current Pattern Block. Referring to the examples and pictures in Appendix \ref{ap:examples} may help to clarify the work in this section.

%%%alpha, beta far apart
\subsection{$\beta \geq 2\alpha-1$, $n \geq 1$}
\label{apsec:farformulas}
Note that each repeat of the Pattern Block \textbf{BlockFar}$(i, \lambda_{i-1},\alpha,\beta)$ produces $\alpha$ invariants.  Then it makes sense to write each index $v$ in the unique form $v=j\alpha+s$ where $0 \leq s < \alpha$ so that $j$ denotes the number of repeats of \textbf{BlockFar}$(i, \lambda_{i-1},\alpha,\beta)$ that have preceded $v$ and $s$ denotes the position of $v$ in \textbf{BlockFar}$(i, \lambda_{i-1},\alpha,\beta)$. Then
\begin{eqnarray*}
\lambda_v &=& \lambda_0 - (\beta-2\alpha+2)j-2(\alpha-1)j-2s\\
&=& (n-j)\beta+\alpha-1-2s
\end{eqnarray*}
where $s = 0, \alpha-1$ and $j=0, \dots, n-1$.

%%%%alpha, beta mid distance apart
\subsection{$2\alpha -1> \beta \geq \frac{3}{2} \alpha$}
\label{apsec:midformulas}

Throughout this section, $r:=2\alpha-\beta$.

\noindent \textit{Formula for $\lambda_i$ in the Build.}
For $v = 0, \dots, l$, $\sum_{i=1}^v g_i = 2v$ so 
$$\lambda_v = \lambda_0-2v = \alpha(n+1) + l\cdot n -1-2v.$$

\noindent \textit{Formulas for $\lambda_i$ in the Reverse Build}

We can confirm that Algorithm~\ref{alg:mid} produces $\lambda_{k-1} = \beta-\alpha+1$ by computing the sum of the gaps $g_i$ as follows:

\begin{eqnarray*}
\lambda_0 - \lambda_{k-1} &=& \sum_{i=1}^k g_i\\
&=& 2l+[(m-1)\cdot 2 + m+2(\alpha-(2m-1))] +2(m-1) +m +2l\\
&=& \beta(n-1)+2\alpha-2\\
&=& \lambda_0 -(\beta-\alpha+1)
\end{eqnarray*}

Thus, for $v =k-l-1, \dots, k-1$, 
$$\lambda_v = \lambda_{k-1} + \sum_{i=v+1}^{k-1} g_i = \lambda_{k-1} + 2(k-v-1)$$
or, for $i=1, \dots, l+1$,
$$\lambda_{k-i} = (l+1)+2(k-(k-i)-1) = l+1+2(i-1) = l+2i-1.$$

\noindent \textit{Formulas for $\lambda_i$ in the Pattern.}

\vspace{0.1in}
\noindent Note that the total number of invariants produced by one complete repeat of \textbf{BlockMid}$(i, \lambda_{i-1},r,\alpha)$ is $2(r-1)+\alpha-(2r-1)+1 = \alpha$.  Further, $\sum g_i$ where the sum is over one repeat of \textbf{BlockMid}$(i, \lambda_{i-1},r,\alpha)$ is 
$$\alpha + \text{\# of 2s in difference sequence} = \alpha + [r-1 + \alpha-(2r-1)] = 2\alpha-r.$$
Note that the indices $l+1, l+2, \dots, n\alpha-l-2$ are in the Pattern (repeats of \textbf{BlockMid}$(i, \lambda_{i-1},r,\alpha)$); we will divide these indices as follows to reflect different parts of the Pattern.

\begin{eqnarray*}
\text{indices } i & & \text{gaps } g_i\\
l+1, \dots, l+2r-2 &\rightarrow & 12 \cdots 12 \\
l+2r-1, \dots , l+\alpha-1 &\rightarrow& 12\cdots2\\
l+\alpha &\rightarrow& 2\\
\vdots & & \vdots\\
l+(n-3)\alpha+1, \dots, l+(n-3)\alpha+2r-2 & \rightarrow &  12 \cdots 12 \\
l+(n-3)\alpha+2r-1, \dots l+(n-2)\alpha-1 &\rightarrow& 12\cdots2\\
l+(n-2)\alpha  &\rightarrow& 2\\
l+(n-2)\alpha+1, \dots, l+(n-2)\alpha+2(r-1) = n\alpha-l-2 & \rightarrow &  12 \cdots 12 \\
\end{eqnarray*}
   
 In the following, $j$ represents how many full repeats of \textbf{BlockMid} have come before $v$ and $y$ represents the position in the current repeat of \textbf{BlockMid}. 

\begin{itemize}
\item[\ding{202}] $ v= l+j\alpha+y$ where $j=0, \dots, (n-2)$ and $y=1, \dots, 2r-2$.  Then 
$$\lambda_v = \lambda_0 - \bigg[ 2l + (2\alpha-m)j + y+ \Big \lfloor \frac{y}{2} \Big \rfloor \bigg].$$
\item[\ding{203}] $v = l+j\alpha+y$ where $j=0, \dots, (n-3)$ and $y=2r-1, \dots ,\alpha-1$.  Then
$$ \lambda_0 - [2l+(2\alpha-r)j+2y-r]$$
\item[\ding{204}] $v = l+j\alpha$ where $j=1, \dots, n-2$.  Then 
$$\lambda_v = \lambda_0 - [j(2\alpha-r) +2l]$$
\end{itemize}

To get rid of the floor function in the first expression, we break this part into two.  For $p=1, \dots, r-1$, when $y=2p$, $\lfloor y/2\rfloor = p$ and when $y=2p-1$, $\lfloor y/2 \rfloor = p-1$.  We now have the following expressions \textit{replacing} \ding{202}:

\vspace{0.2in}

\begin{itemize}
\item[\ding{205}] $ v= l+j\alpha+2p$ where $j=0, \dots, n-2$ and $p = 1, \dots, r-1 = \alpha-l-1$.  Then
$$\lambda_v = \lambda_0 - [2l+(\alpha+l) j + 2p + p].$$
\item[\ding{206}] $v = l+j\alpha+2p-1$ where $j - 0, \dots, (n-2)$ and $p = 1,\dots, r-1$.  Then 
$$\lambda_v = \lambda_0 - [2l+(\alpha+l)j+2p-2 + p]$$
\end{itemize}

Thus, the expressions denoted \ding{203}, \ding{204}, \ding{205}, and \ding{206} taken together give the formulas for all of the invariants produced by the Pattern phase of Algorithm \ref{alg:mid}.

%%%l divides alpha case
\subsection{$\frac{3}{2} \alpha > \beta > \alpha$, $l | \alpha$, $n \geq \frac{\alpha}{l} +1$}
\label{apsec:dividesformulas}

Throughout this section $c:=\frac{\alpha}{l}$, which is an integer by assumption.

\textit{Formulas for $\lambda_i$ in the Build.}

Note that the Build in Algorithm \ref{alg:closedivides} spans the indices $1, 2, \dots, l(1+ \cdots +(c-1))$.  We divide these indices to reflect different parts of the Build as follows.

\begin{eqnarray*}
\text{indices } i & & \text{gaps } g_i\\
1, \dots, l &\rightarrow & 2\cdots 2  \\
l+1, \dots, 2l+l &\rightarrow & 12\cdots12  \\
2l+l+1, \dots, 3l+2l+l &\rightarrow & 112 \cdots 112  \\
\vdots & & \vdots  \\
l(1+\cdots+(c-2))+1, \dots, l(1+\cdots+(c-1)) &\rightarrow& 1\cdots12 \vspace{0.1in} \cdots \vspace{0.1in} 1\cdots 12\\
\end{eqnarray*}

This suggests that we write an index $v$ between $l+1$ and $l(1+\cdots+(c-1))$ as $v=l(1+ \cdots + q) + p$ where $q=1, \dots, (c-2)$and $p=1, \dots, (q+1)l$.  We have the following cases:
\begin{enumerate}
\item If $p=(q+1)j$ for $j=1, \dots, l$ then 
$$\sum_{i=1}^v g_i = (2+\cdots+(q+1))l + (q+1)j+ \Big \lfloor \frac{(q+1)j}{q+1}\Big \rfloor = (2+\cdots (q+1))l+(q+1)j+j.$$
\item If $p = (q+1)j-x$ for $j=1, \dots, l$ and $x = 1,\dots, q$ then 
$$\sum_{i=1}^v g_i  = (2+\cdots+(q+1))l+(q+1)j-x+ \Big \lfloor \frac{(q+1)j-x}{q+1} \Big \rfloor = (2+ \cdots(q+1))l+(q+2)j-x-1.$$
\end{enumerate}

Since $\lambda_v = \lambda_0 - \sum_{i=1}^v g_i$,
\begin{itemize}
\item[\ding{202}]  For $v=0, \dots, l$, 
$$\lambda_v = \lambda_0-2v.$$
\item[\ding{203}] For $v=l(1+\cdots+q) +(q+1)j$ where $q=1, \dots, (c-2)$, $j=1, \dots, l$, 
$$\lambda_v = \lambda_0-[(2+\cdots(q+1))l+(q+1)j+j].$$
\item[\ding{204}] For $v=l(1+\cdots q) +(q+1)j -x$ where $q=1, \dots, (c-2)$, $j=1, \dots, l$, $x = 1,\dots, q$, 
$$\lambda_v=\lambda_0-[(2+\cdots+(q+1))l+(q+1)j-x+j-1].$$
\end{itemize}

\noindent \textit{Formula for $\lambda_i$ in the Reverse Build.}

We claim that the value of $\lambda_{k-1}$ produced by Algorithm \ref{alg:closedivides} is equal to $l+1$ as we would expect by Theorem \ref{thm:ginstructure}.  This can be checked as follows:
\begin{eqnarray*}
\lambda_0-\lambda_{k-1} &=& \sum_{i=1}^{k-1} g_i\\
&=& l(2+\cdots + c) +[(c-1)+2][nl-\alpha+l] + l(2+\cdots+c)-2\\
&=& \alpha-l+\alpha n +nl-2\\
&=& \beta(n-1)+2\alpha-2 = \lambda_0-(l+1)
\end{eqnarray*}
\vspace{0.1in}

The sequence of gaps in the Reverse Build is almost the opposite of the sequence of gaps in the Build; the exception is that the final part of the Build which does not appear in the Reverse Build.\footnote{This is why the \textbf{ReverseBuildPartial} subroutine is necessary}.  Thus, we can use the formulas for $\sum g_i$ derived above to find formulas for $\lambda_v$ where $v$ is in the Build.  In particular, 
$$\lambda_v = \lambda_{k-1-T} = \lambda_{k-1}+\sum_{i=0}^T g_i$$
where $v$ ranges over the index set of the Build.  We take $v = \lambda_{k-1-T}$ where $T = 0, \dots, l(1+\cdots (c-2))+(c-1)l-1$.\footnote{Note that for the Build we considered indices $t=0, \dots, l(1+\cdots + (c-2))+(c-1)l$).}  The final formulas are listed in Section \ref{sec:dividesformulas} of the main text.

\noindent \textit{Formulas for $\lambda_i$ in the Pattern.}

We introduce some additional notation.  Let $E$ be the final index of the Build, $E:= l(1+\cdots+(c-1))$ and let $B := \sum_{i=1}^E g_i$ so $B=l(2+\cdots + c).$
Then $v = E+1, E+1, \dots, E+(ln-\alpha+l)c$ are the indices that are part of the Pattern.  Since each repeat of the \textbf{BlockClose}$(i, \lambda_{i-1},c,d,l,\alpha)$ subroutine produces $c$ invariants, we divide these indices as follows to reflect different parts of the Pattern.

\begin{eqnarray*}
\text{indices } i & & \text{gaps } g_i\\
E+1, \dots, E+c-1 &\rightarrow & 1\cdots1 \\
E+c &\rightarrow & 2  \\
E+c+1, \dots,E+ c+(c-1) &\rightarrow & 1 \cdots 1  \\
E+2c &\rightarrow & 2\\
\vdots & & \vdots  \\
E+(ln-\alpha+l-1)c+1, \dots , (ln-\alpha+l-1)c+(c-1) &\rightarrow& 1\cdots 1\\
E+(ln-\alpha+l)c &\rightarrow & 2
\end{eqnarray*}

This suggests the following formulas.  

\begin{itemize}
\item[\ding{202}]  For $v=E+jc+i$ where $j=0, \dots, ln-\alpha+l-1$, $i=1, \dots, c-1$,
$$\lambda_v = \lambda_0-B-[j(c+1)+i]$$

\item[\ding{203}] For $v = E+jc$ where $j=1, \dots, ln-\alpha+l-1$,
$$\lambda_v = \lambda_0-B-[j(c+1)]$$
\end{itemize}

%%l does not divide alpha

\subsection{$\frac{3}{2} \alpha > \beta > \alpha$, $l  \nmid \alpha$, $n \geq \lceil \frac{\alpha}{l} \rceil+1$}
\label{apsec:notdivideformulas}

Throughout this section, set $c:=\lceil \frac{\alpha}{l} \rceil$ and $d:=\alpha \mod l$.

\noindent \textit{Formulas for $\lambda_i$ in the Build and Reverse Build.}

The Build and the Reverse Build are identical to the $\frac{3}{2}\alpha > \beta > \alpha$, $l | \alpha$, $n \geq \frac{\alpha}{l}$ case.\footnote{This is because the \textbf{Build}, \textbf{ReverseBuild}, and \textbf{ReverseBuildPartial} with the same inputs are found in both Algorithm \ref{alg:closedivides} and Algorithm \ref{alg:closedoesnotdivide}.} Further, the $\lambda_{k-1}$  value produced by Algorithm \ref{alg:closedoesnotdivide} is equal to $l+1$ since 
\begin{eqnarray*}
\lambda_0-\lambda_{k-1}&=& \sum_{i=0}^{k-1} g_i \\
&=& 2l(2+ \cdots + c) + (n-c)[d(c+1)+(l-d)c] -2+d(c+1)\\
&=& \beta(n-1)+2\alpha-2.
\end{eqnarray*}

Therefore, the formulas for $\lambda_i$ in the Build and the Reverse Build are the same as those in Section \ref{apsec:dividesformulas}.

\noindent \textit{Formulas for $\lambda_i$ in the Pattern.}

\noindent As before, we introduce some notation to denote where the Pattern begins.  Let $E$ be the index where the Build ends so that 
$$E:= l(1+\cdots+(c-1)).$$
Let $B = \sum_{i=1}^E g_i$, so that 
$$B:=l(2+\cdots +c).$$

Consider the first Pattern Block; we divide the indexing as follows
\begin{eqnarray*}
\text{indices } i & & \text{gaps } g_i\\
E+1, \dots, E+c-1 &\rightarrow & 1\cdots1 \\
E+c &\rightarrow& 2\\
E+c+1, \dots, E+2c-1 \rightarrow & 1 \cdots 1\\
E+2c &\rightarrow &2\\
\vdots & & \vdots\\
E+(d-1)c+1, \dots, E+(d-1)c+(c-1) &\rightarrow & 1\cdots 1\\
E+dc &\rightarrow & 2\\
E+dc+1, \dots, E+dc+(c-2) &\rightarrow & 1\cdots 1\\
E+dc+(c-1) &\rightarrow & 2\\
E+dc+(c-1)+1, \dots, E+dc+(c-1)+(c-2) &\rightarrow & 1\cdots 1\\
E+dc+2(c-1) &\rightarrow & 2\\
\vdots & & \vdots\\
E+dc+(l-d-1)(c-1)+1, \dots, E+dc+(l-d-1)(c-1)+(c-2)&\rightarrow & 1\cdots 1\\
E+dc+(l-d)(c-1)=E+\alpha &\rightarrow &2\\
\end{eqnarray*}

Note that each Pattern Block has total length $\alpha$ so we divide the remainder of the indexing in the same way.  We then the following formulas.  \ding{202} and \ding{203} describe $v$ such that $g_v$ are the 1s and 2s respectively from $1\cdots1 2$ with $(c-1)$ 1s. \ding{204} and \ding{205} describe $v$ such that $g_v$ are the 1s and 2s from the $1\cdots 12$ with $(c-2)$ 1s.  Note that $p$ signifies the number of full patterns that have preceded the one that $v$ belongs to.

\begin{itemize}
\item[\ding{202}]  For $v = E+p\alpha+jc+i$ where $p=0, \dots, n-c$, $j=0,\dots d-1$, $i=1, \dots, c-1$,
$$\lambda_v=\lambda_0-B-[p(l+\alpha)+j(c+1)+i]$$
\item[\ding{203}] For $v = E+p\alpha+jc$ where $p=0, \dots, n-c$, $j=1, \dots, d$, 
$$\lambda_v = \lambda_0-B-[p(l+\alpha+j(c+1)]$$
\item[\ding{204}] For $v=E+p\alpha+dc+j(c-1)+i$ where $p=0, \dots, n-c-1$, $j=0, \dots, l-d-1$, $i=1, \dots, c-2$
$$\lambda_v=\lambda_0-B-[p(l+\alpha)+d(c+1)+jc+i]$$
\item[\ding{205}] For $v=E+p\alpha+dc+j(c-1)$ where $p=0, \dots, n-c-1$, $j=1, \dots, l-d$
$$\lambda_v = \lambda_0-B-[p(l+\alpha)+d(c+1)+jc]$$
\end{itemize}

%%%close alpha, beta and small n
\subsection{$\frac{3}{2}\alpha > \beta > \alpha$, $2\leq n < \lceil \frac{\alpha}{l} \rceil +1$}
\label{apsec:closesmallnformulas}

Algorithm \ref{alg:closesmalln} for this case is similar to Algorithm \ref{alg:closedivides} for the case where  $\frac{3}{2} \alpha > \beta > \alpha$, $l | \alpha$, $n \gg 0$ so we will refer to the results in Section \ref{sec:divides} and only mention where replacements are made

\textit{Formulas for $\lambda_i$ the Build.}
The Build is the same as in the $\frac{3}{2} \alpha > \beta > \alpha$, $l | \alpha$, $n \gg 0$ case except with $c$ replaced by $n$.
\begin{itemize}
\item[\ding{202}] For $v=0, \dots, l$, 
$$\lambda_v = \lambda_0-2v.$$
\item[\ding{203}] For $v=l(1+\cdots+q) +(q+1)j$ where $q=1, \dots, (n-2)$, $j=1, \dots, l$, 
$$\lambda_v = \lambda_0-[(2+\cdots+(q+1))l+(q+1)j+j]$$
\item[\ding{204}] For $v=l(1+\cdots + q) +(q+1)j -x$ where $q=1, \dots, (n-2)$, $j=1, \dots, l$, $x = 1,\dots, q$, 
$$\lambda_v=\lambda_0-[(2+\cdots+(q+1))l+(q+1)j-x+j-1]$$
\end{itemize}

\textit{Formulas for $\lambda_i$ in the Reverse Build.}
The Build is the same as in the $\frac{3}{2} \alpha > \beta > \alpha$, $n \gg 0$ case except with $c$ replaced by $n$.  It is easy to check that $\sum_{i=1}^{k-1} g_i = \lambda_0 - (l+1)$ so these formulas are valid when we set $\lambda_{k-1} = l+1$.
\begin{itemize}
\item[\ding{202}]  For $v=(k-1)-j$ where  $j=0, \dots, l$, 
$$\lambda_v = \lambda_{k-1}+2j.$$
\item[\ding{203}] For $v=(k-1)-(l(1+\cdots+q) +(q+1)j)$ where $q=1, \dots, (n-2)$, $j=1, \dots, l$,
$$\lambda_v = \lambda_{k-1}+[(2+\cdots+(q+1))l+(q+1)j+j].$$
However, when $q=n-2$ and $j=l$, $\lambda_v$ is in the Pattern, not in the Reverse Build.
\item[\ding{204}] For $v=(k-1)-(l(1+\cdots + q) +(q+1)j -x)$ where $q=1, \dots, (n-2)$, $j=1, \dots, l$, $x = 1,\dots, q$, 
$$\lambda_v=\lambda_{k-1}+[(2+\cdots+(q+1))l+(q+1)j-x+j-1].$$
However, when $q=n-2$, $j=l$, and $x=1$, $\lambda_v$ is in the Pattern, not the Reverse Build.
\end{itemize}

\textit{Formulas for $\lambda_i$ in the Pattern.}

Let $E$ be the index where the Build ends; that is
$$E:= l(1+\cdots+(n-1)).$$
Let $B = \sum_{i=1}^E g_i$ so 
$$B:=l(2+\cdots + n).$$

The Pattern is similar to the case where $\frac{3}{2} \alpha > \beta > \alpha$, $l |\alpha$, and $n \gg 0$ except that $c$ is replaced by $n$ within each Pattern Block and the Pattern repeats $\beta-nl$  times instead of $nl-\alpha+l$ times.  Thus, we can make appropriate substitutions in the formulas for the invariants in the Pattern from Section \ref{sec:dividesformulas} to determine the formulas here.

\begin{itemize}
\item[\ding{202}]  For $v=E+jn+i$ where $j=0, \dots, \beta-nl-1$, $i=1, \dots, n-1$,
$$\lambda_v = \lambda_0-B-[j(n+1)+i].$$
\item[\ding{203}] For $v = E+jn$ where $j=1, \dots, \beta-nl-1$,
$$\lambda_v = \lambda_0-B-[j(n+1)].$$
\end{itemize}

%%%alpha, beta far apart
\subsection{$\alpha=\beta$, $n \geq 1$}
\label{apsec:equalformulas}

The only phase of Algorithm \ref{alg:equal} is the Pattern.  Note that the Pattern Block subroutine \textbf{onestwo}$(n-1, i, \lambda_{i-1})$ produces $n$ invariants.  Thus, it is convenient to write $v = qn+j$ where $0 \leq j \leq n-1$.  Since $\lambda_{qn}-\lambda_{(q+1)n} = n+1$ and $\lambda_{qn+j} -\lambda_{qn+j+1} = 1$ for $j \neq n-1$, we have 
\begin{eqnarray*}
\lambda_v &=& \lambda_0 - q(n+1)-j \\
&=& (n+1)\alpha-1-q(n+1)-j.
\end{eqnarray*}
for $q=0, \dots, \alpha-1$ and $j=0, \dots, n-1$.

\section{Calculation Details}
\label{ap:calculations}

This appendix contains the details of the routine calculations found in Section~\ref{sec:proofofmain}.  In particular, we rewrite $H_{I^n}(t)$ from Proposition \ref{prop:HilbofIn} according to the relations between $\alpha$ and $\beta$ and we simplify partial sums of the form $\sum_i {t-\lambda_i-i+ m-2  \choose  m-2 }$ where $i$ ranges over subsets of the invariants.  These simplified expressions are added together in Section \ref{sec:proofofmain} to give
$$H_J(t)= \sum_{i=0}^{n\alpha-1} {t-\lambda_i-i+ m\!\!-\!\!2  \choose  m\!\!-\!\!2 } + {t-n\alpha+ m\!\!-\!\!1  \choose  m\!\!-\!\!1 }$$  
as in Proposition \ref{prop:HilbofJ}.
The numbering of subsets of the Build, Reverse Build, and Pattern matches that in Appendix~\ref{ap:formulas} and in the main text.  The only combinatorial identities used are equations (1) through (4) from Section \ref{sec:prelim}.

%%%alpha, beta far apart
\subsection{$\beta \geq 2\alpha-1$, $n \geq 1$}
All details are contained in the main text.

%%%\beta and alpha mid distance 

\subsection{ $2\alpha -1> \beta \geq \frac{3}{2} \alpha$}  
\label{apsec:midcalculations}
We begin by computing $\lambda_v+v$ and then simplify the sums of the associated binomial coefficients.  Set $X_j = t-n\alpha-jl$ and $Y_j = t-(n+1)\alpha-jl.$

\vspace{0.1in}
\noindent \textit{Partial sums from the invariants in the Pattern.}

For $v = 0, \dots, l$,
$$\lambda_v+v = \lambda_0-2v+v = (n+1)\alpha+nl-1-v.$$
Then
\begin{eqnarray*}
& & \sum_{v=0}^l {t-[(n+1)\alpha+nl-1-v]+m\!\!-\!\!2 \choose m\!\!-\!\!2} \\
&=& \sum_{v=0}^l {t-(n+1)\alpha-nl+m\!\!-\!\!1 +v \choose m\!\!-\!\!2} \\
&=& {t-(n+1)\alpha-nl+m\!\!-\!\!1 +l+1 \choose m\!\!-\!\!1} - {t-(n+1)\alpha-nl+m\!\!-\!\!1 \choose m\!\!-\!\!1}\\
&=& {Y_{n-1}+m \choose m\!\!-\!\!1} - {Y_n+m\!\!-\!\!1 \choose m\!\!-\!\!1}.
\end{eqnarray*}

\noindent\noindent \textit{Partial sums from the invariants in the Reverse Build.}
\vspace{0.1in}

For $i=1, \dots, l+1$,
$$\lambda_{k-i} + {k-i} = (l+2i-1) + (k-i) = l+i+n\alpha-1.$$
Then
\begin{eqnarray*}
& & \sum_{i=1}^{l+1} {t-[l+n\alpha+i-1]+m\!\!-\!\!2 \choose m\!\!-\!\!2}\\
&=& \sum_{i=1}^{l+1} {t-l-n\alpha+m\!\!-\!\!1-i \choose m\!\!-\!\!2}\\
&=& {t-l-n\alpha+m\!\!-\!\!1 \choose m\!\!-\!\!1} - {t-l-n\alpha+m\!\!-\!\!1-(l+1) \choose m\!\!-\!\!1}\\
&=& {X_1+m\!\!-\!\!1\choose m\!\!-\!\!1} - {X_2+m\!\!-\!\!2 \choose m\!\!-\!\!1}.
\end{eqnarray*}

\vspace{0.2in}
\noindent \textit{Partial sums from the invariants in the Pattern.}
\vspace{0.1in}

\noindent \ding{203} For $v = l+j\alpha+y$ where $j=0, \dots, (n-3)$ and $y=2r-1, \dots, \alpha-1$,
\begin{eqnarray*}
\lambda_v+v &=& \lambda_0-[2l+(\alpha+l)j+2y-(\alpha-l)]+l+j\alpha+y \\
&=& \lambda_0 -l -jl-y+(\alpha-l)\\
&=& (n+2)\alpha+l(n-j-2)-1-y.
\end{eqnarray*}
Then
\begin{eqnarray*}
& & \sum_{j=0}^{n-3} \sum_{y=2r-1}^{\alpha-1} {t-[(n+2)\alpha+l(n-j-2)-1-y] + m\!\!-\!\!2  \choose  m\!\!-\!\!2 }\\
&=& \sum_{j=0}^{n-3} \sum_{y=2(\alpha-l)-1}^{\alpha-1} {t-(n+2)\alpha-l(n-j-2) + m\!\!-\!\!1+y \choose  m\!\!-\!\!2 }\\
&=& \sum_{j=0}^{n-3} \Bigg[ {t-(n+2)\alpha-l(n\!-\!j\!-\!2)+ m\!\!-\!\!1+\alpha \choose  m\!\!-\!\!1} \\
& & - {t-(n+2)\alpha-l(n\!-\!j\!-\!2)+ m\!\!-\!\!1+2(\alpha-l)-1 \choose 3} \Bigg] \\
&=& \sum_{j=0}^{n-3} \Bigg[ {t-(n+1)\alpha-l(n-j-2)+ m\!\!-\!\!1  \choose  m\!\!-\!\!1} - {t-n\alpha-l(n-j)+ m\!\!-\!\!2  \choose  m\!\!-\!\!1} \Bigg] \\
&=& \sum_{p=3}^n \Bigg[ {t-(n+1)\alpha-l(p-2)+ m\!\!-\!\!1 \choose  m\!\!-\!\!1} - {t-n\alpha-lp+ m\!\!-\!\!2  \choose  m\!\!-\!\!1} \Bigg]\\
&=& \sum_{j=1}^{n-2} {Y_j+ m\!\!-\!\!1 \choose  m\!\!-\!\!1} - \sum_{j=3}^n {X_j + m\!\!-\!\!2  \choose  m\!\!-\!\!1}
\end{eqnarray*}

\noindent \ding{204} For $v = l+j\alpha$ where $j=1, \dots, n-2$,
\begin{eqnarray*}
\lambda_v+v &=& \lambda_0-[j(\alpha+l)+2l]+l+j\alpha\\
&=& \lambda_0-jl-l\\
&=& (n+1)\alpha+l(n-j-1)-1.
\end{eqnarray*}
Then
\begin{eqnarray*}
& & \sum_{j=1}^{n-2} {t-[(n+1)\alpha+l(n-j-1)-1]+ m\!\!-\!\!2  \choose  m\!\!-\!\!2 }\\
&=& \sum_{j=1}^{n-2} {t-(n+1)\alpha-l(n-j-1)+ m\!\!-\!\!1 \choose  m\!\!-\!\!2 }\\
&=&\sum_{p=1}^{n-2} {t-(n+1)\alpha-lp+ m\!\!-\!\!1 \choose  m\!\!-\!\!2 }\\
&=& \sum_{j=1}^{n-2} {Y_j+ m\!\!-\!\!1 \choose  m\!\!-\!\!2 }
\end{eqnarray*}

\noindent \ding{205}   For $ v= l+j\alpha+2p$ where $j=0, \dots, n-2$ and $p = 1, \dots, r-1 = \alpha-l-1$,
\begin{eqnarray*}
\lambda_v +v &=& \lambda_0-[2l+(\alpha+l)j+ 3p] + l + j\alpha+2p \\
&=& \lambda_0 - l-lj-p\\
&=&(n+1)\alpha+(n-j-1)l-1-p.
\end{eqnarray*}
Then
\begin{eqnarray*}
& & \sum_{j=0}^{n-2} \sum_{p=1}^{\alpha-l-1} {t-[(n+1)\alpha+(n-j-1)l-1-p]+ m\!\!-\!\!2  \choose  m\!\!-\!\!2 } \\
&=& \sum_{j=0}^{n-2} \Bigg[ {t-(n+1)\alpha-(n\!-\!j\!-\!1)l+ m\!\!-\!\!1 +\alpha-l \choose  m\!\!-\!\!1 } - {t-(n+1)\alpha-(n\!-\!j\!-\!1)l+ m\!\!-\!\!1 +1 \choose  m\!\!-\!\!1 } \Bigg] \\
&=& \sum_{j=0}^{n-2} \Bigg[ {t-n\alpha-(n\!-\!j)l+ m\!\!-\!\!1  \choose  m\!\!-\!\!1 } - {t-(n+1)\alpha-(n\!-\!j-\!1)l+m \choose  m\!\!-\!\!1 } \Bigg]\\
&=& \sum_{p=2}^n {X_p + m\!\!-\!\!1  \choose  m\!\!-\!\!1 } -\sum_{p'=1}^{n-1} {Y_{p'}+m\choose  m\!\!-\!\!1 }
\end{eqnarray*}

\noindent \ding{206} For $v = l+j\alpha+2p-1$ where $j = 0, \dots, (n-2)$ and $p = 1,\dots, r-1$, 
\begin{eqnarray*}
\lambda_v+v &=& \lambda_0-[2l+(\alpha+l)j+3p-2] + l+j\alpha+2p-1 \\
&=& \lambda_0 - l-lj-p+1\\
&=& (n+1)\alpha+l(n-j-1) -p.
\end{eqnarray*}
Then
\begin{eqnarray*}
& & \sum_{j=0}^{n-2} \sum_{p=1}^{\alpha-l-1} {t-[(n+1)\alpha+l(n-j-1)-p]+ m\!\!-\!\!2  \choose  m\!\!-\!\!2 }\\
&=& \sum_{j=0}^{n-2} \sum_{p=1}^{\alpha-l-1} {t-(n+1)\alpha-l(n-j-1) + m\!\!-\!\!2  +p \choose  m\!\!-\!\!2 }\\
&=& \sum_{j=0}^{n-2} \Bigg[ {t-(n+1)\alpha-l(n\!-\!j\!-\!1) + m\!\!-\!\!2 +\alpha-l \choose  m\!\!-\!\!1 } - {t-(n+1)\alpha-l(n\!-\!j\!-\!1)+ m\!\!-\!\!2+1  \choose  m\!\!-\!\!1 } \Bigg] \\
&=& \sum_{p=2}^n \Bigg[ {t-n\alpha-lp+ m\!\!-\!\!2  \choose  m\!\!-\!\!1 } - {t-(n+1)\alpha-l(p-1)+ m\!\!-\!\!1  \choose  m\!\!-\!\!1 } \Bigg]\\
&=& \sum_{j=2}^n{X_j+ m\!\!-\!\!2  \choose  m\!\!-\!\!1 } - \sum_{j=1}^{n-1} {Y_j+ m\!\!-\!\!1  \choose  m\!\!-\!\!1 }
\end{eqnarray*}

Adding these partial sums together with ${t-n\alpha+m-1 \choose m-1}$ gives the Hilbert funcion sum in Section~\ref{sec:midsum}.

%%%l divides alpha case

\subsection{$\frac{3}{2} \alpha > \beta > \alpha$, $l | \alpha$, $n \geq \frac{\alpha}{l} +1$}  
\label{apsec:dividescalculations}

Throughout this section, $c:=\frac{\alpha}{l}$; this is an integer by assumption.

\vspace{0.2in}

\noindent \textit{Partial sums from invariants in the Build.}

Recall that $\lambda_0=(n+1)\alpha+ln-1$ and set $X_j = t-n\alpha-jl$ and $Y_j = t-(n+1)\alpha-jl$ as above.

\noindent \ding{202} 

 For $v=0, \dots, l$, 
$$\lambda_v+v = \lambda_0-2v+v = \lambda_0-v = (n+1)\alpha+nl-v-1.$$
Then
\begin{eqnarray*}
& & \sum_{j=0}^l {t-((n+1)\alpha+nl-j-1)+m\!\!-\!\!2 \choose m\!\!-\!\!2} \\
&=& \sum_{j=0}^l { t-(n+1)\alpha-nl+(m\!\!-\!\!1)+j \choose m\!\!-\!\!2 } \\
&=& \Bigg[ {t-(n+1)\alpha-nl+m\!\!-\!\!1 + l +1 \choose m\!\!-\!\!1} - {t-(n+1)\alpha-nl+m\!\!-\!\!1 \choose m\!\!-\!\!1} \Bigg]\\
&=& {Y_{n-1}+m \choose m\!\!-\!\!1} -{Y_n +m\!\!-1 \choose m\!\!-\!\!1}
\end{eqnarray*}

\newpage
 \noindent \ding{203}

For $v=l(1+\cdots+q) +(q+1)j$ where $q=1, \dots, (c-2)$, $j=1, \dots, l$,
\begin{eqnarray*}
\lambda_v+v &=& \lambda_0-(2+\cdots+(q+1))l-(q+1)j-j+l(1+\cdots +q) + (q+1)j \\
&=& (n+1)\alpha+l(n-q)-j-1.
\end{eqnarray*}
Then
\begin{eqnarray*}
& & \sum_{q=1}^{c-2} \sum_{j=1}^l {t-(n+1)\alpha-l(n-q)+j +1 +m\!\!-\!\!2 \choose m\!\!-\!\!2}\\
&=& \sum_{q=1}^{c-2} \Bigg[ {t-(n+1)\alpha-l(n-q)+m\!\!-\!\!1 +l+1 \choose m\!\!-\!\!1}-{t-(n+1)\alpha-l(n-q)+m \choose m\!\!-\!\!1} \Bigg]\\
&=& \sum_{j=n-c+1}^{n-2} {Y_j+m \choose m\!\!-\!\!1} - \sum_{j=n-c+2}^{n-1} {Y_j+m \choose m\!\!-\!\!1}\\
&=& {Y_{n-c+1} + m \choose m\!\!-\!\!1}-{Y_{n-1}+m \choose m\!\!-\!\!1}
\end{eqnarray*}

\noindent \ding{204} 

For $v=l(1+\cdots + q) +(q+1)j -x$ where $q=1, \dots, (c-2)$, $j=1, \dots, l$, $x = 1,\dots, q$,
\begin{eqnarray*}
\lambda_v+v &=& \lambda_0-(2+\cdots+(q+1))l-(q+1)j+x-j+1+l(1+\cdots+q)+(q+1)j-x \\
&=&  (n+1)\alpha+l(n-q)-j.
\end{eqnarray*}
Then
\begin{eqnarray*}
& & \sum_{q=1}^{c-2} q \Bigg[\sum_{j=1}^l {t-(n+1)\alpha-l(n-q)+j+m\!\!-\!\!2 \choose m\!\!-\!\!2}\Bigg]\\
&=& \sum_{q=1}^{c-2} q \Bigg[ {t-(n+1)\alpha-l(n-q)+m\!\!-\!\!2 +l+1 \choose m\!\!-\!\!1} -{t-(n+1)\alpha-l(n-q)+m\!\!-\!\!2+1 \choose m\!\!-\!\!1} \Bigg]\\
&=& \sum_{j=n-c+1}^{n-2} (n-j-1) {t-(n+1)\alpha-lj+m\!\!-\!\!1 \choose m\!\!-\!\!1} - \sum_{j'=n-c+2}^{n-1} (n-j') {t-(n+1)\alpha-lj'+m\!\!-\!\!1 \choose m\!\!-\!\!1}\\
&=& (n-(n-c+1)-1) {Y_{n-c+1}+m\!\!-\!\!1 \choose m\!\!-\!\!1} -(n-(n-1)-1) {Y_{n-1}+m\!\!-\!\!1 \choose m\!\!-\!\!1}\\
&& - \sum_{j=n-c+2}^{n-1} {Y_j+m\!\!-\!\!1 \choose m\!\!-\!\!1}\\
&=&(c-2){Y_{n-c+1}+m\!\!-\!\!1 \choose m\!\!-\!\!1} - \sum_{j=n-c+2}^{n-1} {Y_j+m\!\!-\!\!1 \choose m\!\!-\!\!1}
\end{eqnarray*}

\vspace{0.2in}

\newpage

\noindent \textit{Partial sums from invariants in the Reverse Build.}

Recall that $\lambda_{k-1} = l+1$.

\noindent \ding{202}

For $v=(k-1)-j$ where  $j=0, \dots, l$, $$\lambda_v+v = \lambda_{k-1}+2j+(k-1)-j = l+n\alpha+j.$$
Then
\begin{eqnarray*}
& & \sum_{j=0}^l {t-n\alpha-l-j+m\!\!-\!\!2 \choose m\!\!-\!\!2}\\
&=& {t-n\alpha-l+m\!\!-\!\!2+1 \choose m\!\!-\!\!1} -{t-n\alpha-2l+m\!\!-\!\!2\choose m\!\!-\!\!1}\\
&=& {X_1+ m\!\!-\!\!1 \choose m\!\!-\!\!1} - {X_2+m\!\!-\!\!2 \choose m\!\!-\!\!1}
\end{eqnarray*}

\noindent \ding{203}

For $v=(k-1)-(l(1+\cdots+m) +(q+1)j)$ where $q=1, \dots, (c-2)$, $j=1, \dots, l$, with the exception of $q=c-2$ and $j=l$,
\begin{eqnarray*}
 \lambda_v+v&=& \lambda_{k-1} +(2+\cdots+(q+1))l+(q+1)j+j+(k-1)-(l(1+\cdots +q)+(q+1)j)\\
 &=& (q+1)l+n\alpha+j.
 \end{eqnarray*}
Note that $\lambda_v$ where $q=c-2$ and $j=l$ is not in the Reverse Build. Then the sum $\sum_v {t-v-\lambda_v+m-2 \choose m-2}$ where $v$ ranges over this part of the Reverse Build is:\begin{eqnarray*}
& & \sum_{q=1}^{c-2} \sum_{j=1}^l {t-n\alpha-(q+1)l-j+m\!\!-\!\!2 \choose m\!\!-\!\!2}- {t-(n+1)\alpha+m-2 \choose m-2} \\
&=& \sum_{q=1}^{c-2} \Bigg[ {t-n\alpha-(q+1)l+m\!\!-\!\!2 \choose m\!\!-\!\!1} -{t-n\alpha-(q+1)l+m\!\!-\!\!2-l \choose m\!\!-\!\!1} \Bigg] - {Y_0+m-2 \choose m-2}\\
&=& \sum_{j=2}^{c-1} {X_j+m\!\!-\!\!2 \choose m\!\!-\!\!1} -\sum_{j'=3}^c {X_{j'}+m\!\!-\!\!2\choose m\!\!-\!\!1}-{Y_0+m-2 \choose m-2}\\
&=& {X_2+m\!\!-\!\!2 \choose m\!\!-\!\!1} - {X_c+m\!\!-\!\!2 \choose m\!\!-\!\!1}-{Y_0+m-2 \choose m-2}
\end{eqnarray*}

\noindent \ding{204}

  For $v=(k-1)-(l(1+\cdots + q) +(q+1)j -x)$ where $q=1, \dots, (c-2)$, $j=1, \dots, l$, $x = 1,\dots, q$, with the exception of $q=c-2, j=l$, and $x=1$,
\begin{eqnarray*}
\lambda_v+v &=& \lambda_{k-1} +(2+\cdots +(q+1))l-x+j-1+(k-1)-l(1+\cdots +q)+x\\
&=& n\alpha+(q+1)l+j-1.
\end{eqnarray*}
Note that $\lambda_v$ where $q=c-2$, $j=l$, and $x=1$ is not in the Reverse Build. Then the sum $\sum_v {t-v-\lambda_v+m-2 \choose m-2}$ where $v$ ranges over this part of the Reverse Build is:
\begin{eqnarray*}
& & \sum_{q=1}^{c-2} q \Bigg[ \sum_{j=1}^l {t-n\alpha-(q+1)l-j-1+m\!\!-\!\!2\choose m\!\!-\!\!2}\Bigg] - {t-(n+1)\alpha+m-1 \choose m-2} - {Y_0 +m-1 \choose m-2} \\
&=& \sum_{q=1}^{c-2} q \Bigg[{t-n\alpha-(q+1)l+m\!\!-\!\!1 \choose m\!\!-\!\!1} -{t-n\alpha-(q+1)l+m\!\!-\!\!1-l \choose m\!\!-\!\!1} \Bigg] - {Y_0 +m-1 \choose m-2}\\
&=& \sum_{j=2}^{c-1} (j-1){X_j+m\!\!-\!\!1 \choose m\!\!-\!\!1}-\sum_{j'=3}^c (j'-2){X_{j'} +m\!\!-\!\!1 \choose m\!\!-\!\!1}- {Y_0 +m-1 \choose m-2}\\
&=& \sum_{j=2}^{c-1}(j-2){X_j+m\!\!-\!\!1 \choose m\!\!-\!\!1} + \sum_{j=2}^{c-1} {X_j+m\!\!-\!\!1 \choose m\!\!-\!\!1}-\sum_{j=3}^{c} (j-2){X_j+m\!\!-\!\!1 \choose m\!\!-\!\!1} - {Y_0 +m-1 \choose m-2}\\
&=& \sum_{j=2}^{c-1} {X_j +m\!\!-\!\!1 \choose m\!\!-\!\!1} + 0 - (c-2) {X_c+m\!\!-\!\!1 \choose m\!\!-\!\!1}- {Y_0 +m-1 \choose m-2}
\end{eqnarray*}

\vspace{0.2in}

 \noindent \textit{Partial Sums from invariants in the Pattern.}
\vspace{0.1in}

Note that $E-B = l(1-c)$.

\noindent \ding{202}
For $v=E+jc+i$ where $j=0, \dots, ln-\alpha+l-1$, $i=1, \dots, c-1$, 
\begin{eqnarray*}
\lambda_v+v &=& \lambda_0-B-jc-j-i+E+jc+i\\
&=& n\alpha+(n+1)l-1-j.
\end{eqnarray*}
Then
\begin{eqnarray*}
& & (c-1) \sum_{j=0}^{l(n+1)-\alpha-1} {t-n\alpha-l(n+1)+j+1+m\!\!-\!\!2 \choose m\!\!-\!\!2}\\
&=& (c-1) \Bigg[ {t-n\alpha-l(n+1)+m\!\!-\!\!1+l(n+1)-\alpha-1+1 \choose m\!\!-\!\!1} - {t-n\alpha-l(n+1)+m\!\!-\!\!1 \choose m\!\!-\!\!1} \Bigg]\\
&=& (c-1) \Bigg[{Y_0+m\!\!-\!\!1 \choose m\!\!-\!\!1} -{X_{n+1}+m\!\!-\!\!1 \choose m\!\!-\!\!1} \Bigg] 
\end{eqnarray*}

\noindent \ding{203}
For $v=E+jc$ where $j=1, \dots, ln-\alpha+l-1$
\begin{eqnarray*}
\lambda_v+v = \lambda_0-B-jc-j+E+jc =n\alpha+(n+1)l-j-1.
\end{eqnarray*}

Then
\begin{eqnarray*}
& & \sum_{j=1}^{l(n+1)-\alpha} {t-n\alpha-(n+1)l+j+1+m\!\!-\!\!2 \choose m\!\!-\!\!2} \\
&=& \Bigg[ {t-n\alpha-(n+1)l+m\!\!-\!\!1+l(n+1)-\alpha+1 \choose m\!\!-\!\!1} - {t-n\alpha-(n+1)l+m\!\!-\!\!1+1 \choose m\!\!-\!\!1} \Bigg]\\
&=& {Y_0+m \choose m\!\!-\!\!1} -{X_{n+1}+m \choose m\!\!-\!\!1}
\end{eqnarray*}

\vspace{0.2in}

\noindent \textit{Rewriting the Hilbert function of $I^n$.}

The Hilbert function of $I^n$ is
$$H_{I^n}(t)=  \sum_{j=1}^n \Bigg( {t-\alpha n -jl + m\!\!-\!\!1 \choose m\!\!-\!\!1} - {t-\alpha(n+1) - lj + m\!\!-\!\!1 \choose m\!\!-\!\!1}\Bigg) + {t-n\alpha+m\!\!-\!\!1 \choose m\!\!-\!\!1}$$
Now we will simplify this expression further by taking advantage of the assumption that $\alpha$ is divisible by $l$.  We can rewrite the indexing set $j=1, \dots, n$ as follows:
\begin{eqnarray*}
& & 1,2, \dots, c-1 \\
& &\hspace{0.15in} c, c+1 \dots, 2c-1\\
&& \hspace{0.15in} \vdots  \\
& & \hspace{0.15in}  (\lfloor n/c \rfloor -1)c, \dots, \lfloor n/c \rfloor \cdot c-1\\
& & \hspace{0.15in}  \lfloor n/c \rfloor \cdot c, \dots, n = \lfloor n/c \rfloor \cdot c + (n-\lfloor n/c \rfloor \cdot c)
\end{eqnarray*}

Using the fact that $c\cdot l = \alpha$, reindexing, and simplifying, the \textit{sum} in the above Hilbert Function becomes:

\begin{eqnarray*}
& & \sum_{j=1}^{c-1}  \Bigg( {t-\alpha n -jl + m\!\!-\!\!1  \choose  m\!\!-\!\!1 } - {t-\alpha(n+1) - lj +  m\!\!-\!\!1  \choose  m\!\!-\!\!1 }\Bigg) + \\
& & \sum_{p=1}^{\lfloor n/c \rfloor-1} \sum_{j=0}^{c-1}  \Bigg( {t-\alpha n -(pc+j)l + m\!\!-\!\!1  \choose  m\!\!-\!\!1 } - {t-\alpha(n+1) - l(pc+j) +  m\!\!-\!\!1  \choose  m\!\!-\!\!1 }\Bigg)+ \\
& & \sum_{j=0}^{n-\lfloor n/c \rfloor \cdot c}  \Bigg( {t-\alpha n -(\lfloor n/c\rfloor c+j)l + m\!\!-\!\!1  \choose  m\!\!-\!\!1 } - {t-\alpha(n+1) - (\lfloor n/c\rfloor c+j)l +  m\!\!-\!\!1  \choose  m\!\!-\!\!1 }\Bigg) \\
&=& \sum_{j=1}^{c-1}  \Bigg( {t-\alpha n -jl + m\!\!-\!\!1  \choose  m\!\!-\!\!1 } - {t-\alpha(n+1) - lj +  m\!\!-\!\!1  \choose  m\!\!-\!\!1 }\Bigg) + \\
& &  \sum_{j=0}^{c-1}\Bigg( \sum_{p=1}^{\lfloor n/c \rfloor-1}  {t-\alpha (n+p) -lj + m\!\!-\!\!1  \choose  m\!\!-\!\!1 } -\sum_{p'=2}^{\lfloor n/c\rfloor} {t-\alpha(n+p') - lj +  m\!\!-\!\!1  \choose  m\!\!-\!\!1 }\Bigg)+ \\
& & \sum_{j=0}^{n-\lfloor n/c \rfloor \cdot c}  \Bigg( {t-\alpha(n+\lfloor n/c \rfloor) -jl + m\!\!-\!\!1  \choose  m\!\!-\!\!1 } - {t-\alpha(n+1+ \lfloor n/c \rfloor ) - jl +  m\!\!-\!\!1  \choose  m\!\!-\!\!1 }\Bigg) \\
&=& \sum_{j=1}^{c-1}  \Bigg( {t-\alpha n -jl + m\!\!-\!\!1  \choose  m\!\!-\!\!1 } - {t-\alpha(n+1) - lj +  m\!\!-\!\!1  \choose  m\!\!-\!\!1 }\Bigg) + \\
& &  \sum_{j=0}^{c-1} {t-\alpha(n+1)-lj+ m\!\!-\!\!1  \choose  m\!\!-\!\!1 } - \sum_{j=0}^{c-1} {t-\alpha(n+\lfloor n/c \rfloor)-lj+ m\!\!-\!\!1  \choose  m\!\!-\!\!1 }\\
& & \sum_{j=0}^{n-\lfloor n/c \rfloor \cdot c}  \Bigg( {t-\alpha(n+\lfloor n/c \rfloor) -jl + m\!\!-\!\!1  \choose  m\!\!-\!\!1 } - {t-\alpha(n+1+ \lfloor n/c \rfloor ) - jl +  m\!\!-\!\!1  \choose  m\!\!-\!\!1 }\Bigg) \\
&=&\sum_{j=1}^{c-1} {t-\alpha n-lj+ m\!\!-\!\!1  \choose  m\!\!-\!\!1 } + {t-\alpha(n+1)+ m\!\!-\!\!1  \choose  m\!\!-\!\!1 } -\\
&& \sum_{j=n-\lfloor n/c \rfloor c+1}^{c-1} {t-\alpha(n+\lfloor n/c \rfloor)-lj+ m\!\!-\!\!1  \choose  m\!\!-\!\!1 }  - \sum_{j=0}^{n-\lfloor n/c\rfloor c} {t-\alpha(n+1+\lfloor n/c \rfloor )-lj+ m\!\!-\!\!1  \choose  m\!\!-\!\!1 }\\
&=& \sum_{j=1}^{c-1} {t-\alpha n -lj+ m\!\!-\!\!1  \choose  m\!\!-\!\!1 } + {t-\alpha(n+1)+ m\!\!-\!\!1  \choose  m\!\!-\!\!1 } - \sum_{p=n+1}^{c\lfloor n/c \rfloor +c-1} {t-\alpha n -lp+ m\!\!-\!\!1  \choose  m\!\!-\!\!1 } \\
& & - \sum_{p=c\lfloor n/c \rfloor}^n {t-\alpha(n+1) -lp+ m\!\!-\!\!1  \choose  m\!\!-\!\!1 }\\
\end{eqnarray*}

Letting  $X_j = t-n\alpha-lj$ and $Y_j = t-(n+1)\alpha-lj,$

\begin{eqnarray*}
&=& \sum_{j=1}^{c-1} {X_j+ m\!\!-\!\!1  \choose  m\!\!-\!\!1 } - \sum_{j'=n-c+1}^{c\lfloor n/c \rfloor -1} {t-n\alpha-l(j'+c)+ m\!\!-\!\!1  \choose  m\!\!-\!\!1 } - \sum_{j=c\lfloor n/c \rfloor}^n {Y_j+ m\!\!-\!\!1  \choose  m\!\!-\!\!1 } +\\
&& {Y_0 +  m\!\!-\!\!1 \choose  m\!\!-\!\!1 }\\
&=&\sum_{j=1}^{c-1} {X_j+ m\!\!-\!\!1  \choose  m\!\!-\!\!1 } - \sum_{j=n-c+1}^{c\lfloor n/c \rfloor -1} {t-n\alpha-\alpha-lj+ m\!\!-\!\!1  \choose  m\!\!-\!\!1 } - \\
&& \sum_{j=c\lfloor n/c \rfloor}^n {Y_j+ m\!\!-\!\!1  \choose  m\!\!-\!\!1 } + {Y_0 +  m\!\!-\!\!1 \choose  m\!\!-\!\!1 }\\
&=& \sum_{j=1}^{c-1} {X_j+ m\!\!-\!\!1 \choose  m\!\!-\!\!1 } -\sum_{j=n-c+1}^{n} {Y_j+ m\!\!-\!\!1  \choose  m\!\!-\!\!1 } + {Y_0+ m\!\!-\!\!1  \choose  m\!\!-\!\!1 }
\end{eqnarray*}
Also note that $X_c = Y_0$ so that, when $l | \alpha$, 
$$H_{I^n}(t) =  \sum_{j=1}^{c} {X_j+ m\!\!-\!\!1 \choose  m\!\!-\!\!1 } -\sum_{j=n-c+1}^{n} {Y_j+ m\!\!-\!\!1  \choose  m\!\!-\!\!1 }+{t-n\alpha+ m\!\!-\!\!1  \choose  m\!\!-\!\!1 }$$

%%%l does not divides alpha case

\subsection{$\frac{3}{2} \alpha > \beta > \alpha$, $l \nmid \alpha$, $n \geq \lceil \frac{\alpha}{l} \rceil+1$}
\label{apsec:notdividecalculations}

As before, $c:=\lceil \frac{\alpha}{l} \rceil$ and $d:= \alpha \mod l$.

\vspace{0.1in}

\newpage
 
\noindent \textit{Partial sums appearing in $H_J(t)$.}

We now simplify the partial sums $\sum_v {t-\lambda_v-v+m-2 \choose m-2}$ where $v$ ranges over parts of the Build, Reverse Build, and Pattern.  We set $X_j = t-n\alpha-lj$, $Y_j=t-(n+1)\alpha-lj$, and $Z_j = t-(n+1)\alpha-lj+d$.

 \vspace{0.1in}

\noindent \textit{Partial sums from invariants in the Build and the Reverse Build.} 

These are the same as in the case $\frac{3}{2}\alpha>\beta>\alpha$, $l|\alpha$; see Section~\ref{apsec:dividescalculations} for details.
 
  \vspace{0.1in}

\noindent \textit{Partial sums from invariants in the Pattern}

Recall that $E-B = l(1-c)$.  

\noindent \ding{202} For $v = E+p\alpha+jc+i$ where $p=0, \dots, n-c$, $j=0,\dots d-1$, $i=1, \dots, c-1$,
\begin{eqnarray*} 
\lambda_v+v &=& \lambda_0-B-pl-p\alpha-jc-j-i+E+p\alpha+jc+i\\
&=& (n+1)\alpha+nl-1+l(1-c)-pl-j\\
&=&(n+1)\alpha+l(n-c-p+1)-j-1.
\end{eqnarray*}

Then

\begin{eqnarray*}
& & (c-1)\Bigg[\sum_{p=0}^{n-c} \sum_{j=0}^{d-1} {t-(n+1)\alpha-l(n-c-p+1)+j+1+m\!\!-\!\!2 \choose m\!\!-\!\!2}\Bigg]\\
&=& (c-1) \Bigg[\sum_{p=0}^{n-c} \Bigg( {t-(n+1)\alpha-l(n-c-p+1)+d +m\!\!-\!\!1+d\choose m\!\!-\!\!1} \\
& &- {t-(n+1)\alpha-l(n-c-p+1)+m\!\!-\!\!1 \choose m\!\!-\!\!1} \Bigg) \Bigg]\\
&=& (c-1)\Bigg[\sum_{j=1}^{n-c+1} \Bigg( {Z_j + m\!\!-\!\!1  \choose  m\!\!-\!\!1 } - {Y_j+ m\!\!-\!\!1  \choose  m\!\!-\!\!1 } \Bigg) \Bigg].
\end{eqnarray*}

\noindent \ding{203} For $v = E+p\alpha+jc$ where $p=0, \dots, n-c$, $j=1, \dots, d$, 
\begin{eqnarray*}
\lambda_v+v &=&\lambda_0-B-pl-p\alpha-jc-j+E+p\alpha+jc\\
&=&(n+1)\alpha+nl-1+l(1-c)-pl-j\\
&=&(n+1)\alpha+l(n-c-p+1)-j-1.
\end{eqnarray*}

Then

\begin{eqnarray*}
& & \sum_{p=0}^{n-c} \sum_{j=1}^{d} {t-(n+1)\alpha-l(n-c-p+1)+j+1+m\!\!-\!\!2 \choose m\!\!-\!\!2}\\
&=& \sum_{p=0}^{n-c} \Bigg[ {t-(n+1)\alpha-l(n-c-p+1)+d+1+m\!\!-\!\!1\choose m\!\!-\!\!1} \\
& & -{t-(n+1)\alpha-l(n-c-p+1)+1+m\!\!-\!\!1 \choose m\!\!-\!\!1} \Bigg]\\
&=& \sum_{j=1}^{n-c+1} \Bigg[  {Z_j+m \choose m\!\!-\!\!1} - {Y_j+m \choose m\!\!-\!\!1}\Bigg].
\end{eqnarray*}

\noindent\ding{204} For $v=E+p\alpha+dc+j(c-1)+i$ where $p=0, \dots, n-c-1$, $j=0, \dots, l-d-1$, $i=1, \dots, c-2$,
\begin{eqnarray*}
\lambda_v+v &=& \lambda_0-B-pl-p\alpha-dc-d-jc-i+E+p\alpha+dc+jc-j+i\\
&=&(n+1)\alpha+nl-1+l(1-c)-pl-d-j\\
&=&(n+1)\alpha+l(n-c-p+1)-j-d-1.
\end{eqnarray*}

Then

\begin{eqnarray*}
& & (c-2) \Bigg[  \sum_{p=0}^{n-c-1} \sum_{j=0}^{l-d-1} {t-(n+1)\alpha-l(n-c-p+1)+j+d+1+m\!\!-\!\!2 \choose m\!\!-\!\!2}\Bigg]\\
&=& (c-2) \Bigg[  \sum_{p=0}^{n-c-1} \Bigg(  {t-(n+1)\alpha-l(n-c-p+1)+d+l-d+m\!\!-\!\!1\choose m\!\!-\!\!1} \\
&&-{t-(n+1)\alpha-l(n-c-p+1)+d+m\!\!-\!\!1 \choose m\!\!-\!\!1}\Bigg)\Bigg]\\
&=&  (c-2)\sum_{j=1}^{n-c} {Y_j+m\!\!-\!\!1 \choose m\!\!-\!\!1} - (c-2) \sum_{j'=2}^{n-c+1} {Z_{j'}+m\!\!-\!\!1 \choose m\!\!-\!\!1}.
\end{eqnarray*}

\noindent \ding{205}  For $v=E+p\alpha+dc+j(c-1)$ where $p=0, \dots, n-c-1$, $j=1, \dots, l-d$,
\begin{eqnarray*}
\lambda_v+v &=& \lambda_0-B-pl-p\alpha-dc-d-jc+E+p\alpha+dc+jc-j\\
&=&(n+1)\alpha+nl-1+l(1-c)-pl-d-j\\
&=& (n+1)\alpha+l(n-c-p+1)-d-j-1.
\end{eqnarray*}

Then

\begin{eqnarray*}
& &  \sum_{p=0}^{n-c-1} \sum_{j=1}^{l-d} {t-(n+1)\alpha-l(n-c-p+1)+j+d+1+m\!\!-\!\!2 \choose m\!\!-\!\!2}\\
&=& \sum_{p=0}^{n-c-1} \Bigg[  {t-(n+1)\alpha-l(n-c-p)+d+l-d+1+m\!\!-\!\!1 \choose m\!\!-\!\!1} \\
&&-{t-(n+1)\alpha-l(n-c-p+1)+d+m \choose m\!\!-\!\!1}\Bigg]\\
&=& \sum_{j=1}^{n-c} {Y_j+m \choose m\!\!-\!\!1} -\sum_{j'=2}^{n-c+1} {Z_{j'}+m \choose m\!\!-\!\!1}
\end{eqnarray*}

\vspace{0.2in}

\noindent \textit{Rewriting the Hilbert function.}

The expression for the Hilbert function of $I^n$ in terms of $\alpha$ and $l$ found in Section \ref{sec:hilbmethod} is 
$$H_{I^n}(t)=  \sum_{j=1}^n \Bigg( {t-\alpha n -jl + m\!\!-\!\!1  \choose  m\!\!-\!\!1 } - {t-\alpha(n+1) - lj +  m\!\!-\!\!1  \choose  m\!\!-\!\!1 }\Bigg) + {t-n\alpha+ m\!\!-\!\!1  \choose  m\!\!-\!\!1 }.$$

Using the fact that $lc = \alpha+l-d$ the summation in this expression is
\begin{eqnarray*}
& & \sum_{j=1}^{c-1} {t-n\alpha-lj+ m\!\!-\!\!1  \choose  m\!\!-\!\!1 } + \sum_{j=c}^n {t-n\alpha-lj+ m\!\!-\!\!1  \choose  m\!\!-\!\!1 } - \sum_{j=1}^n {t-(n+1)\alpha-lj+ m\!\!-\!\!1  \choose  m\!\!-\!\!1 }\\
&=& \sum_{j=1}^{c-1} {t-n\alpha-lj+ m\!\!-\!\!1  \choose  m\!\!-\!\!1 } +\sum_{i=0}^{n-c} {t-n\alpha-l(c+i) + m\!\!-\!\!1 \choose  m\!\!-\!\!1 }- \sum_{j=1}^n {t-(n+1)\alpha-lj+ m\!\!-\!\!1  \choose  m\!\!-\!\!1 }\\
&=& \sum_{j=1}^{c-1} {t-n\alpha-lj+ m\!\!-\!\!1  \choose  m\!\!-\!\!1 } +\sum_{i=0}^{n-c} {t-n\alpha-\alpha-l+d-li + m\!\!-\!\!1 \choose  m\!\!-\!\!1 }\\
&&- \sum_{j=1}^n {t-(n+1)\alpha-lj+ m\!\!-\!\!1  \choose  m\!\!-\!\!1 }\\
 &=& \sum_{j=1}^{c-1} {t-n\alpha-lj+ m\!\!-\!\!1  \choose  m\!\!-\!\!1 } +\sum_{j'=1}^{n-c+1} {t-(n+1)\alpha-lj'+d + m\!\!-\!\!1 \choose  m\!\!-\!\!1 }\\
 &&- \sum_{j=1}^n {t-(n+1)\alpha-lj+ m\!\!-\!\!1  \choose  m\!\!-\!\!1 }\\
\end{eqnarray*}
 This leads to the formula for $H_{I^n}(t)$ found in Section~\ref{sec:notdividerewrite}.
 
 \vspace{0.2in}

%%%small n case

\subsection{$\frac{3}{2} \alpha > \beta > \alpha$, $2 \leq n < \frac{\alpha}{l} +1$}
\label{apsec:closesmallncalculations}

\noindent \textit{Partial sums appearing in $H_J(t)$.}

We now simplify the partial sums $\sum_v {t-\lambda_v-v+m-2 \choose m-2}$ where $v$ ranges over parts of the Build, Reverse Build, and Pattern.  As above, we set $X_j = t-n\alpha-lj$, $Y_j=t-(n+1)\alpha-lj$, and $Z_j = t-(n+1)\alpha-lj+d$.

 \vspace{0.1in}
 %build
 \noindent \textit{Partial sums from invariants in the Build.}

\noindent \ding{202} For $v=0, \dots, l$,
$$\lambda_0-2v+v = \lambda_0-v.$$

Then

\begin{eqnarray*}
& & \sum_{j=0}^l {t-(n+1)\alpha-nl+1+j+m-2 \choose m-2}\\
&=& {t-(n+1)\alpha-(n-1)l+m-1+1 \choose m-1} -{t-(n+1)\alpha-nl+m-1 \choose m-1}\\
&=& {Y_{n-1} + m \choose m-1} - {Y_n+m-1 \choose m-1}
\end{eqnarray*}

\noindent \ding{203} For $v = l(1+ \cdots +q)+(q+1)j$ where $q=1, \dots, n-2$ and $j=1, \dots, l$, 
\begin{eqnarray*}
\lambda_v+v &=& \lambda_0 - (2+\cdots +(q+1))l-(q+1)j-j+l(1+\cdots q)+(q+1)j\\
&=& \lambda_0-ql-j.
\end{eqnarray*}

Then

\begin{eqnarray*}
& & \sum_{q=1}^{n-2} \sum_{j=1}^l {t-(n+1)\alpha-nl+1+ql+j+m-2 \choose m-2}\\
&=& \sum_{q=1}^{n-2} \Bigg[ {t-(n+1)\alpha-(n-q)l+m-1+l+1 \choose m-1} - {t-(n+1)\alpha-(n-q)l+m-1+1 \choose m-1}\Bigg]\\
&=& \sum_{j=1}^{n-2} {Y_j+m \choose m-1} - \sum_{j'=2}^{n-1} {Y_{j'} +m \choose m-1}\\
&=& {Y_1+m \choose m-1} - {Y_{n-1}+m \choose m-1}.
\end{eqnarray*}

\noindent \ding{204} For $v = l(1+\cdots +q)+(q+1)j-x$ where $q=1, \dots, n-2$, $j=1, \dots, l$, and $x=1, \dots, q$,

\begin{eqnarray*}
\lambda_v+v &=& \lambda_0 - (2+ \cdots+(q+1))l-(q+1)j+x-j+1+l(1+\cdots+q) +(q+1)j-x\\
&=&\lambda_0-(q+1)l_l-j+1.
\end{eqnarray*}

Then

\begin{eqnarray*}
& & \sum_{q=1}^{n-2} q \Bigg[ \sum_{j=1}^l {t-(n+1)\alpha-nl+ql+j+m-2 \choose m-2} \Bigg]\\
&=& \sum_{q=1}^{n-2} q \Bigg[ {t-(n+1)\alpha-l(n-q-1)+m-1 \choose m-1} -{t-(n+1)\alpha-l(n-q)+m-1\choose m-1} \Bigg]\\
&=& \sum_{j=1}^{n-2}(n-j-1){Y_j+m-1 \choose m-1} - \sum_{j'=2}^n (n-j') {Y_{j'}+m-1 \choose m-1}\\
&=& \sum_{j=1}^{n-2} (n-j-1) {Y_j+m-1 \choose m-1} - \sum_{j=2}^{n-1}(n-j-1) {Y_j+m-1 \choose m-1} - \sum_{j=2}^{n-1} {Y_j+m-1 \choose m-1}\\
&=&(n-2){Y_1+m-1 \choose m-1} - \sum_{j=2}^{n-1} {Y_j+m-1 \choose m-1}.
\end{eqnarray*}

 \vspace{0.1in}
 %reverse build
 \noindent \textit{Partial sums from invariants in the Reverse Build.}

\noindent \ding{202} For $v = (k-1)-j$ where $j=0, \dots, l$,
$$\lambda_v+v = (k-1)-j+\lambda_{k-1}+2j = l+k+j.$$

Then

\begin{eqnarray*}
\sum_{j=0}^l {t-n\alpha -l-j+m-2 \choose m-2} &=& \Bigg[ {t-n\alpha-l+m-2+1 \choose m-1} - {t-n\alpha-l+m-2-l \choose m-1}\Bigg]\\
&=& {X_1+m-1 \choose m-1} - {X_2 +m-2 \choose m-1}.
\end{eqnarray*}

\noindent \ding{203} For $v = (k-1)-(l(1+\cdots +q)+(q+1)j)$ where $q=1, \dots, n-2$ and $j=0, \dots, l$

\begin{eqnarray*}
\lambda_v+v &=& (k-1)-l(1+\cdots+q)-j(q+1)+\lambda_{k-1}+(2+ \dots+(q+1))l+(q+1)j+j\\
&=&n\alpha-1-l+l+1+(q+1)l+j\\
&=&n\alpha+(q+1)l+j.
\end{eqnarray*}
Note that $\lambda_v$ such that $q=n-2$ and $j=l$ is not in the Reverse Build.

\begin{eqnarray*}
& & \sum_{q=1}^{n-2} \sum_{j=1}^l {t-n\alpha-(q+1)l-j+m-2 \choose m-2} - {t-n\alpha-(n-1)l-l+m-2 \choose m-2}\\
&=&\sum_{q=1}^{n-2} \Bigg[  {t-n\alpha-(q+1)l+m-2 \choose m-1} - {t-n\alpha-(q+1)l +m-2 -l \choose m-1} \Bigg] - {X_n+m -2\choose m-2}\\
&=& \sum_{j=2}^{n-1} {X_j+m-2 \choose m-1} - \sum_{j'=3}^n {X_{j'}+m-2\choose m-1}- {X_n+m -2\choose m-2}\\
&=& {X_2+m-2 \choose m-2} - {X_n+m-2 \choose m-1}- {X_n+m -2\choose m-2}
\end{eqnarray*}

\noindent \ding{204} For $v = (k-1)-(l(1+\cdots+q)+(q+1)j-x)$ where $q=1, \dots, n-2$, $j=1, \dots, l$, and $x=1, \dots, q$, 

\begin{eqnarray*}
\lambda_v +v &=& n\alpha-l(1+\cdots q)-(q+1)j+x+\lambda_{k-1}+(2+\cdots+(q+1))l+(q+1)j-x+j-1\\
&=&n\alpha+ql+l+1+j-1 \\
&=&n\alpha+l(q+1)+j-1.
\end{eqnarray*}
Note that $\lambda_v$ such that $q=n-2$, $j=l$, and $x=1$ is not in the Reverse Build.

\begin{eqnarray*}
& & \sum_{q=1}^{n-2} q \Bigg[ \sum_{j=1}^l {t-n\alpha-l(q+1)-j+1+m-2 \choose m-2} \Bigg] - {t-n\alpha-l(n-1) - l+1+m-2 \choose m-2}\\
&=& \sum_{q=1}^{n-2}q \Bigg[ {t-n\alpha-l(q+1)+m-2 \choose m-1} - {t-n\alpha-l(q+1) +m-1 -l \choose m-1} \Bigg] - {X_n+m-1 \choose m-2}\\
&=& \sum_{j=2}^{n-1}(j-1){X_j+m-1 \choose m-1} - \sum_{j'=3}^n (j'-2) {X_{j'}+m-1 \choose m-1} - {X_n+m-1 \choose m-2}\\
&=& \sum_{j=2}^{n-1} {X_j+m-1 \choose m-1} + \sum_{j=2}^{n-1} (j-2) {X_j+m-1\choose m-1} - \sum_{j=3}^n (j-2){X_j+m-1 \choose m-1} - {X_n+m-1 \choose m-2}\\
&=& \sum_{j=2}^{n-1} {X_j+m-1 \choose m-1} - (n-2){X_n+m-1 \choose m-1}- {X_n+m-1 \choose m-2}
\end{eqnarray*}

 \vspace{0.1in}
 %reverse build
 \noindent \textit{Partial sums from invariants in the Pattern.}

Note that $E-B = l-ln$.

\noindent \ding{202} For $v=E=jn+i$ where $j=0, \dots, \beta-nl-1$ and $i=1, \dots, n-1$,
\begin{eqnarray*}
\lambda_v+v &=& \lambda_0-B-jn-j-i+E+jn+i\\
&=& \lambda_0+l-ln-j.
\end{eqnarray*}

Then

\begin{eqnarray*}
&&(n-1) \sum_{j=0}^{\beta-nl-1} {t-(n+1)\alpha-nl+1 -l +ln+j+m-2 \choose m-2} \\
&=& (n-1) \sum_{j=0}^{\beta-nl-1} {t-(n+1)\alpha-l+j+m-1 \choose m-2}\\
&=&(n-1) \Bigg[ {t-(n+1)\alpha-l+m-1 +\beta-nl \choose m-1} - {t-(n+1)\alpha-l+m-1 \choose m-1}\Bigg]\\
&=& (n-1) \Bigg[ {t-n\alpha-nl+m-1 \choose m-1} - {Y_1+m-1\choose m-1}\Bigg]\\
&=& (n-1) \Bigg[ {X_n+m-1 \choose m-1} - {Y_1+m-1 \choose m-1} \Bigg].
\end{eqnarray*}

\noindent \ding{203} For $v=E+jn$ where $j=1, \dots, \beta-nl-1$, 
\begin{eqnarray*}
\lambda_v+v &=&\lambda_0-B-jn-j+e+jn\\
&=& (n+1)\alpha+nl-1+l-ln-j\\
&=&(n+1)\alpha+l-1.
\end{eqnarray*}

Then

\begin{eqnarray*}
& & \sum_{j=1}^{\beta-nl-1} {t-(n+1)\alpha-l+1+j+m-2 \choose m-2}\\
&=& \Bigg[ {t-(n+1)\alpha-l+m-1+\beta-nl\choose m-1} - {t-(n+1)\alpha-l+m-1+1 \choose m-1}\Bigg]\\
&=& {X_n+m-1 \choose m-1}- {Y_1+m \choose m-1}.
\end{eqnarray*}

\bibliography{Ginof2CI}
\bibliographystyle{amsalpha}
\nocite{*}

\end{document}